\theoremstyle{plain}
\newtheorem{thm}{Theorem}
\newtheorem{cor}[thm]{Corollary}
\newtheorem{lem}[thm]{Lemma}
\newtheorem{lem*}[thm]{Lemma}
\newtheorem{prop}[thm]{Proposition}
\newtheorem{question}[thm]{Question}
\newtheorem{ass}[thm]{Assumptions}
\theoremstyle{definition}
\newtheorem{dfn}{Definition}
\theoremstyle{remark}
\newtheorem{rem}{Remark}
\newtheorem{rem*}{Remark}
\numberwithin{rem}{section} 
\numberwithin{dfn}{section} 
\numberwithin{equation}{section} 
\numberwithin{thm}{section} 
\def\hb{\hfil\break}
\def\!{\operatorname{!}}
\def\1{\bold 1}
\def\Aut{\operatorname{Aut}}
\def\deg{\operatorname{deg}}
\def\ord{\operatorname{ord}}
\def\Spec{\operatorname{Spec}}
\def\Gal{\operatorname{Gal}}
\def\Hom{\operatorname{Hom}}
\def\GL{\operatorname{GL}}
\def\Im{\operatorname{Im}}
\def\tr{\operatorname{tr}}
\def\deg{\operatorname{deg}}
\def\id{\operatorname{id}}
\def\Id{\operatorname{Id}}
\def\End{\operatorname{End}}
\def\pr{\operatorname{pr}}
\def\mod{\operatorname{mod}}
\def\tor{\operatorname{tor}}
\begin{document}

\title{On a reduction map for Drinfeld modules}

\author{Wojciech Bondarewicz, Piotr Kraso\'n}
\begin{abstract}
In this paper we investigate a local to global principle for Mordell-Weil group defined over a ring of integers ${\cal O}_K$  of $\mathbf t$-modules that are products of the Drinfeld modules ${\widehat\varphi}={\phi}_{1}^{e_1}\times \dots \times {\phi}_{t}^{e_{t}}.$  
Here $K$  is a finite extension of the field of fractions of $A={\mathbb F}_{q}[t].$
We assume that the ${\text{\rm rank}(\phi}_{i})=d_{i}$ and  endomorphism rings of the involved Drinfeld modules of generic characteristic are the simplest possible, i.e. ${\End}_{K^{{\mathrm{sep}}}}({\phi}_{i})=A$ for $ i=1,\dots , t.$ 
Our main result is the following numeric criterion. Let   ${N}={N}_{1}^{e_1}\times\dots\times {N}_{t}^{e_t}$ be a finitely generated $A$ submodule of the Mordell-Weil group ${\widehat\varphi}({\cal O}_{K})={\phi}_{1}({\cal O}_{K})^{e_{1}}\times\dots\times {\phi}_{t}({\cal O}_{K})^{{e}_{t}},$ and let ${\Lambda}\subset N$ be an A - submodule.  If we assume $d_{i}\geq e_{i}$ and  $P\in N$ such that ${\mathrm{red}}_{\cal W}(P)\in {\mathrm{red}}_{\cal W}({\Lambda}) $ for almost all primes ${\cal W}$ of ${\cal O}_{K},$
then $P\in {\Lambda}+N_{tor}.$ We also build on the recent results of S.Bara{\'n}czuk \cite{b17} concerning the dynamical local to global principle in Mordell-Weil type groups and the solvability of certain dynamical equations  to the aforementioned ${\mathbf t}$-modules.
\end{abstract}
\date{\today}

\address{University of Szczecin, ul. Wielkopolska 15, 70-451 Szczecin, Poland \hb
\indent University of Szczecin, ul. Wielkopolska 15, 70-451 Szczecin, Poland}
\email{wbondarewicz@gmail.com, piotrkras26@gmail.com}
\subjclass[2010]{11G09, 14G05, 14G25, 11J93}
\keywords{Drinfeld modules, t-modules, rational points, Hasse principle}

\thanks{}

\maketitle

\section{Introduction} 
The local to global principle we investigate is of the following type. Assume we are given an object  $A$ (of a small category $\cal C$) associated with a ring of integers ${\cal O}_R$ of some ring $R$. Assume that for any 
prime ideal $\frak p \lhd R$ there exists a reduced object $A_{\frak p}. $ Assume further that we are given a property $PROP$ of $A $ such that the corresponding property $PROP_{\frak p}$ for $A_{\frak p}$ makes sense.
Then one can raise the following question:
\begin{question}[Local to global principle]\label{question}
Assume that for almost all prime ideals ${\frak p}\lhd R$  properties $PROP_{\frak p}$ hold. Does it follow that property $PROP$ hold for $A$?
\end{question}
In 1975 A. Schinzel \cite{sch75}, generalized the work of Skolem \cite{sk37} from 1937 and proved the following theorem concerning exponential equations.
\begin{thm}[A.Schinzel]\label{tw1}
If ${\alpha}_{1},\dots {\alpha}_{k}, \beta $ are non-zero elements of $K$ and the congruence 
$${\alpha}_{1}^{x_1}{\alpha}_{2}^{x_2}\dots {\alpha}_{k}^{x_k}\equiv \beta \,\, mod \,{\mathfrak p}$$ 
is soluble for almost all prime ideals $\mathfrak p$ of a number field $K$ then the corresponding 
equation is soluble in rational integers. 
That is there exist $n_1\dots \
n_k \in {\mathbb Z}$ such that $\beta = {\alpha_{1}}^{n_1}\dots {\alpha_{k}}^{n_k}.$
\end{thm}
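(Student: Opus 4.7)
The plan is to argue by contrapositive: assume $\beta\notin G:=\langle\alpha_1,\dots,\alpha_k\rangle\subset K^*$ and produce a set of positive density of primes $\mathfrak p\lhd\mathcal O_K$ at which the congruence fails. The tools are Kummer theory and the Chebotarev density theorem.

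For each rational prime $\ell$, form the tower
\[
K\subset K(\zeta_\ell)\subset L_\ell := K(\zeta_\ell,\alpha_1^{1/\ell},\dots,\alpha_k^{1/\ell})\subset M_\ell := L_\ell(\beta^{1/\ell}).
\]
The aim is to choose $\ell$ so that $[M_\ell:L_\ell]=\ell$, equivalently $\beta\notin G\cdot(K^*)^\ell$. Once such an $\ell$ is fixed, Kummer theory supplies $\sigma\in\Gal(M_\ell/K)$ that fixes $L_\ell$ pointwise but moves $\beta^{1/\ell}$, and Chebotarev applied to the finite Galois extension $M_\ell/K$ yields a positive density of primes $\mathfrak p$ (unramified in $M_\ell$) whose Frobenius is conjugate to $\sigma$. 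At every such $\mathfrak p$ the residue field contains $\zeta_\ell$ (so $\ell\mid|\mathbb F_\mathfrak p^*|$), each $\alpha_i$ reduces to an $\ell$-th power in $\mathbb F_\mathfrak p^*$, while $\beta$ does not. Every element of $\operatorname{red}_\mathfrak p(G)$ is therefore an $\ell$-th power mod $\mathfrak p$ whereas $\operatorname{red}_\mathfrak p(\beta)$ is not, so the congruence $\beta\equiv\alpha_1^{x_1}\cdots\alpha_k^{x_k}\pmod{\mathfrak p}$ is insoluble at $\mathfrak p$.

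The main obstacle is the production of a suitable prime $\ell$, which is where Kummer theory does the real work. After inverting the finitely many primes appearing in the $\alpha_i$ and $\beta$, I would work inside the finitely generated group $\Gamma:=\langle\alpha_1,\dots,\alpha_k,\beta\rangle\subset K^*$ and pass to $\overline\Gamma=\Gamma/\mu(K)$, viewed inside the free abelian group $K^*/\mu(K)$ (which is free because the Dirichlet unit group and the group of principal fractional ideals are both free modulo $\mu(K)$). If the image of $\beta$ lies outside the saturation of $\overline G$ in $K^*/\mu(K)$, then every sufficiently large prime $\ell$ coprime to $|\mu(K)|$ satisfies $\beta\notin G(K^*)^\ell$. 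Otherwise some minimal $N\geq 2$ satisfies $\beta^N=\zeta\prod_i\alpha_i^{m_i}$ with $\zeta\in\mu(K)$, and selecting a prime $\ell\mid N$ with $\ell>|\mu(K)|$ and invoking a Kummer descent (together with the minimality of $N$) rules out any factorisation $\beta=g\gamma^\ell$ with $g\in G$, $\gamma\in K^*$.

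Once the appropriate $\ell$ is in hand, the remaining step is the routine application of Chebotarev described above. The overall architecture — reduce to $S$-units, find a prime $\ell$ obstructing $\ell$-th-power membership via Kummer theory, detect this obstruction at a positive density of primes by Chebotarev — is precisely the template that the paper transports to the Drinfeld-module setting, with $\mathfrak l$-torsion of the $\phi_i$ playing the role classically played by $\zeta_\ell$ and the $\alpha_i^{1/\ell}$.
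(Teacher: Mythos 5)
The paper never proves Theorem \ref{tw1}; it is quoted from Schinzel's 1975 article as motivation, so your argument has to stand on its own. The Chebotarev/Kummer endgame you describe is standard and correct, but the pivotal step --- producing a single \emph{prime} $\ell$ with $\beta\notin G\cdot(K^{*})^{\ell}$ whenever $\beta\notin G$ --- is false, and this is exactly where the real content of the theorem lies. Take $K=\mathbb{Q}$, $k=1$, $\alpha_{1}=2^{9}$, $\beta=2^{3}$. Then $\beta\notin G=\langle 2^{9}\rangle$, and in your notation the minimal exponent is $N=3$; yet $\beta=1\cdot 2^{3}\in G\cdot(\mathbb{Q}^{*})^{3}$, and for any prime $\ell'\neq 3$ one solves $3-9m\equiv 0\ (\mathrm{mod}\ \ell')$ to get $\beta\in G\cdot(\mathbb{Q}^{*})^{\ell'}$ as well. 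So no prime $\ell$ exists, and no ``Kummer descent together with the minimality of $N$'' can rule out $\beta=g\gamma^{\ell}$: the factorisation is sitting in plain sight. The obstruction here is only visible at level $\ell^{2}$ (one checks $\beta\notin G\cdot(\mathbb{Q}^{*})^{9}$, and correspondingly the congruence $2^{9x}\equiv 2^{3}$ fails precisely at the positive-density set of primes $p$ with $9\mid\operatorname{ord}_{p}(2)$). The correct argument must therefore run the Kummer--Chebotarev machine over towers $K(\zeta_{\ell^{n}},\alpha_{i}^{1/\ell^{n}})\subset K(\zeta_{\ell^{n}},\alpha_{i}^{1/\ell^{n}},\beta^{1/\ell^{n}})$ for a suitable prime \emph{power} $\ell^{n}$, after first establishing that $\beta\notin G$ forces $\beta\notin G\cdot(K^{*})^{\ell^{n}}$ for some $\ell^{n}$ (possibly after modifying $\beta$ by an element of $G$, and with separate care for the torsion case). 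This is precisely why the reduction theorems of the present paper (Theorems \ref{redthm0} and \ref{redthm1}, and their use in the proof of Theorem \ref{localglobal}) are formulated with $\mathcal{P}^{k}$-division points for $k\gg 0$ rather than with $\mathcal{P}$-division points alone.

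A secondary gap: your case analysis omits $N=1$, i.e.\ $\beta=\zeta g$ with $\zeta\in\mu(K)\setminus\{1\}$, $g\in G$, but $\beta\notin G$. Then $\beta\in G\cdot(K^{*})^{\ell}$ for every $\ell$ prime to $|\mu(K)|$, so only the finitely many primes dividing $|\mu(K)|$ are available, and whether any of them works at level $n=1$ depends on $G$; again one is forced to the prime-power version and to controlling how the cyclotomic layer of the tower interacts with the Kummer layer. With these repairs your outline does become the classical proof, but as written the selection of $\ell$ is the missing idea rather than a routine verification.
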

This theorem is in fact the {\it detecting linear dependence} problem for number fields and of the kind considered in Question \ref{question}.
The reduction map is the usual reduction modulo a non Archimedean prime in a number field.
It is well known that some questions concerning number fields can be translated to the context of abelian varieties. 
An analogous question for abelian varieties was raised by W. Gajda (see \cite{we03} ) and has been studied extensively \cite{ko03},\cite{we03},\cite{bgk05},\cite{gg09},\cite{jo13}. 
A related problem, called {\it the support problem},  originated from a question by P.Erd{\"os} for integers and was solved in \cite{crs97} for number fields and elliptic curves. 
The support problem for some abelian varieties and intermediate Jacobians was treated in \cite{bgk03} and solved for all abelian varieties by M.Larsen in \cite{la03}.
In the case of Drinfeld modules, the support problem was studied by A.Li in \cite{li06}.
For the history  of the detecting linear dependence problem for abelian varieties, as well as generalisation of it to linear algebraic groups see \cite{fk17}.

 
In \cite{bk11} the second named author and G.Banaszak proved the following theorem:

\begin{thm}\label{thm A}
 Let ${\mathbb A}/F$ be an abelian variety
defined over a number field $F.$ Assume that
${\mathbb A}$ is isogenous to ${\mathbb A}_{1}^{e_1} \times \dots \times {\mathbb A}_{t}^{e_{t}}$
with ${\mathbb A}_i$ simple, pairwise nonisogenous abelian varieties
such that  $${dim_{{\mathrm{End}}_{F^{\prime}}({\mathbb A}_i)^{0}}\,
H_{1} ({\mathbb A}_i({\mathbb C});\, {\mathbb Q})\, \geq e_i}$$
for each $1 \leq i \leq t,$ 
where ${\mathrm{End}}_{F^{\prime}}({\mathbb A}_{i})^{0} := {\mathrm{End}}_{F^{\prime}}({\mathbb A}_{i})\otimes {\mathbb Q}$
and $F^{\prime}/F$ is a finite extension
such that the isogeny is defined over $F^{\prime}.$ 
Let $P \in {\mathbb A}(F)$ and let
$\Lambda$ be a subgroup of ${\mathbb A}(F).$ 
If ${\mathrm{red}}_v (P) \in {\mathrm{red}}_v (\Lambda)$ for almost all
$v$ of  ${\cal O}_F$ then $P \in \Lambda + {\mathbb A}(F)_{\mathrm{tor}}.$
\noindent
Moreover if ${\mathbb A}(F)_{{\mathrm{tor}}}
\subset \Lambda,$
then the following conditions are equivalent: 
\begin{itemize}
\item[1]\,\, $P \in \Lambda$
\item[2]\,\, ${\mathrm{red}}_v (P) \in {\mathrm{red}}_v (\Lambda)$ for almost all
$v$ of  ${\cal O}_F.$
\end{itemize}
\end{thm}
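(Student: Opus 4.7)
The plan is to prove $P \in \Lambda + \mathbb{A}(F)_{\mathrm{tor}}$ by working one rational prime $\ell$ at a time. Using that $\mathbb{A}(F)$ is finitely generated (Mordell--Weil), it suffices to show that the image of $P$ in $\mathbb{A}(F) \otimes \mathbb{Z}_\ell$ lies in the image of $\Lambda \otimes \mathbb{Z}_\ell$ modulo torsion for every $\ell$. First, by passing to the finite extension $F'/F$ over which the decomposition $\mathbb{A} \sim \mathbb{A}_1^{e_1} \times \cdots \times \mathbb{A}_t^{e_t}$ is defined, and absorbing the bounded kernel and cokernel of the isogeny into the torsion at the end, we may work directly with the product. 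Since $\mathrm{Hom}(\mathbb{A}_i, \mathbb{A}_j) = 0$ for $i \neq j$, the problem splits into independent sub-problems for each isotypic factor $\mathbb{A}_i^{e_i}$, with $P$ and $\Lambda$ replaced by their projections $P_i$ and $\Lambda_i$.

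Next, I would carry out the Kummer-theoretic step. Fix an isotypic factor $\mathbb{A}_i^{e_i}$ and a rational prime $\ell$. Form the Kummer tower $F_n := F(\mathbb{A}_i[\ell^n], \ell^{-n} \Lambda_i)$ and consider the cohomology class
\[
\kappa_n(P_i) \in H^1(\mathrm{Gal}(F_n/F), \mathbb{A}_i^{e_i}[\ell^n])
\]
representing the obstruction to expressing $P_i$ as $\ell^n Q + \lambda$ with $Q \in \mathbb{A}_i^{e_i}(\overline{F})$ and $\lambda \in \Lambda_i$. The hypothesis $\mathrm{red}_v(P) \in \mathrm{red}_v(\Lambda)$, combined with injectivity of reduction on $\ell^n$-torsion at primes $v$ of good reduction not above $\ell$, translates into the assertion that the Frobenius at $v$ annihilates $\kappa_n(P_i)$ for a set of primes of density one. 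By the Chebotarev density theorem, $\kappa_n(P_i)$ vanishes for all $n$, so $P_i \in \ell^n \Lambda_i + \mathbb{A}_i^{e_i}(F)[\ell^n]$ for every $n$.

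The hard part is to pass from this $\ell$-divisibility-modulo-$\Lambda_i$-up-to-torsion statement to the genuine conclusion $P_i \in \Lambda_i + \mathbb{A}_i^{e_i}(F)_{\mathrm{tor}}$, since a priori the torsion correction drifts with $n$. Controlling it requires a lower bound on the image of the Kummer--Galois representation inside the natural endomorphism ring of $T_\ell(\mathbb{A}_i^{e_i})$, and it is here that the hypothesis $\dim_{\mathrm{End}_{F'}(\mathbb{A}_i)^{0}} H_1(\mathbb{A}_i(\mathbb{C}); \mathbb{Q}) \geq e_i$ is essential. By Faltings' theorem the commutant of the Galois action on $V_\ell(\mathbb{A}_i)$ equals $\mathrm{End}_{F'}(\mathbb{A}_i) \otimes \mathbb{Q}_\ell$, so $V_\ell(\mathbb{A}_i^{e_i})$ splits as at least $e_i$ independent Galois-stable copies of $V_\ell(\mathbb{A}_i)$; combined with a Bogomolov-type openness result for the $\ell$-adic image in the commutant of $\mathrm{End}_{F'}(\mathbb{A}_i)$, this forces a uniform-in-$n$ bound on the torsion obstruction and yields the desired conclusion upon letting $n \to \infty$. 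Securing the correct openness/largeness of the Kummer--Galois image for the product $\mathbb{A}_i^{e_i}$, uniformly as $\ell$ varies, is the principal technical obstacle.

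For the \emph{moreover} part, the assumption $\mathbb{A}(F)_{\mathrm{tor}} \subset \Lambda$ collapses $\Lambda + \mathbb{A}(F)_{\mathrm{tor}}$ to $\Lambda$; hence (1) $\Rightarrow$ (2) is immediate from the definition of reduction, while (2) $\Rightarrow$ (1) follows at once from the main statement.
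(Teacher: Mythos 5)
Your proposal assembles the right toolkit (Kummer towers, Chebotarev, openness of the $\ell$-adic image, Faltings), but two of its load-bearing steps fail as stated. First, the reduction to isotypic factors ``with $P$ and $\Lambda$ replaced by their projections $P_i$ and $\Lambda_i$'' is lossy: $\Lambda$ is in general a proper subgroup of $\prod_i \mathrm{pr}_i(\Lambda)$ (already $\Lambda=\{(x,x)\}\subset\mathbb{Z}^2$ with $P=(1,2)$ shows this), so proving $P_i\in\Lambda_i+\mathrm{tor}$ for every $i$ does not yield $P\in\Lambda+\mathrm{tor}$. The proof in \cite{bk11} (of which Theorem \ref{localglobal}, proved in Section 5, is the Drinfeld analogue) never projects $\Lambda$: it fixes a single functional $\pi$ on a torsion-free replacement of $N$ detecting $P\notin\Lambda$ at one prime $\mathcal{U}$, lifts it via the trace isomorphism (Lemma \ref{trace}) to a map $\bm{\pi}$ of modules over the whole semisimple algebra ${\mathbb M}_{e}({\mathbb D})$, and decomposes $\mathrm{Im}\,\bm{s}$ and $\ker\bm{\pi}$ into simple submodules across all factors simultaneously.

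Second, the deduction ``Frobenius at $v$ annihilates $\kappa_n(P_i)$ for a set of primes of density one, hence by Chebotarev $\kappa_n(P_i)=0$'' is not a valid implication: a class in $H^1$ can restrict to zero on every procyclic subgroup without vanishing, and this is exactly where the counterexamples of Jossen--Perucca \cite{jp10} live (see Theorem \ref{best1}: the bound $e_i\le\dim$ is sharp). If your step were correct it would prove the local-to-global statement with no hypothesis on the $e_i$, which is false; note also that your stated conclusion $P_i\in\ell^n\Lambda_i+{\mathbb A}_i^{e_i}(F)[\ell^n]$ for all $n$ would force $P_i$ to be torsion. The hypothesis $\dim_{D_i}H_1({\mathbb A}_i(\mathbb{C});\mathbb{Q})\ge e_i$ is not needed to control a ``torsion drift'' a posteriori; it enters at the Chebotarev stage itself. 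Concretely, one needs $k_i\le e_i\le d_i$ so that the $k_i$ relevant generators coming from $\mathrm{Im}\,\bm{s}(i)$ can be matched, via the analogue of Theorem \ref{redthm11}, with $k_i$ members of a basis $T_1(i),\dots,T_{d_i}(i)$ of $T_\ell({\mathbb A}_i)/\ell^{n'}$: there must be enough independent torsion directions to prescribe the reductions of those generators while killing the rest. Without that count, the Frobenius element required for the contradiction (identity on the Kummer layer, nontrivial homothety on torsion) need not exist, and your argument cannot close.
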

Theorem \ref{thm A} gives a numeric criterion needed for a local to global principle to hold (up to torsion). Let us make  a few more comments on Theorem \ref{thm A}. 
This principle for abelian varieties  with a commutative endomorphism ring was proven in \cite{we03}. Notice that the reduction map makes sense, since for an abelian variety ${\mathbb A}$ over a number field one has the N{\'e}ron model 
${\cal A}$ such that the Mordell-Weil group ${\cal A}({\cal O}_{F})={\mathbb A}(F)$ (cf. \cite{blr90}). Notice also that for abelian varieties over global fields, one has the Poincar{\'e} decomposition theorem, i.e.
any abelian variety ${\mathbb A}/F$ can be decomposed over some field $F^{\prime} \supset F$ uniquely up to an isogeny  as  a product ${\mathbb A}={\mathbb A}_{1}^{e_{1}}\times \dots \times {\mathbb A}_{t}^{e_{t}}$ where ${\mathbb A}_{i}, \, i=1,\dots , t$ are geometrically simple abelian varieties 
cf. \cite{m70}.

The main result of this current paper is the result corresponding to Theorem \ref{thm A} for Drinfeld modules, or rather Anderson ${\mathbf t}$-modules \cite{a86}, \cite{bp09} that are products of Drinfeld modules. The general framework of the proof is similar
to that of \cite{bk11} or \cite{bk13}. ( In \cite{bk13} the local to global principle for {\'e}tale $K$-theory of curves was treated.) As in previous papers, there are significant subtle differences between the proofs.
Let $A={\mathbb F}_q[t]$ be the ring of polynomials of one variable over a finite field ${\mathbb F}_{q}.$
Our main theorem is the following:

\begin{thm}\label{localglobal0}
Let ${\widehat\varphi}={\phi}_{1}^{e_{1}}\times\dots\times {\phi}_{t}^{e_{t}}$ be a ${\mathbf t}$-module where ${\phi}_{i}$ for $1\leq i \leq t$ are pairwise non-isogenous Drinfeld modules of generic characteristic  defined over ${\cal O}_K.$ 
 We assume that for each  $1\leq i\leq t,$
${\End}_{K^{\mathrm{sep}}}({\phi}_{i})=A.$ Let $N_{i}\subset {\phi}_{i}({\cal O}_{K})$ be a finitely generated $A$-submodule of the Mordell-Weil group. Pick ${\Lambda}\subset N=N_{1}^{e_{1}}\times \dots \times N_{k}^{e_{t}}$ to be an $A$-submodule. Assume that $d_{i}={{\mathrm rank}}({\phi}_{i})\geq e_{i}$ for each $1\leq i\leq t.$ Let $P\in N$ and assume that for almost all primes ${\cal P}$ of ${\cal O}_K$ we have
${\mathrm{red}}_{\cal P}(P)\in {\mathrm{red}}_{\cal P}({\Lambda}).$ Then $P\in {\Lambda}+N_{\tor}.$
\end{thm}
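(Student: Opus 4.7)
The plan is to adapt the strategy of Banaszak-Kraso\'n \cite{bk11} from abelian varieties to the Drinfeld module setting, replacing $\ell$-adic Tate modules of abelian varieties by $\mathfrak{l}$-adic Tate modules $T_{\mathfrak{l}}(\phi_i)$ for primes $\mathfrak{l}\lhd A$. The structural inputs I will rely on are: (i) for each $\phi_i$ with $\End_{K^{\mathrm{sep}}}(\phi_i)=A$ and $\rk(\phi_i)=d_i$, $T_{\mathfrak{l}}(\phi_i)$ is a free $A_{\mathfrak{l}}$-module of rank $d_i$ and, by Pink-R\"utsche, the image of $G_K=\Gal(K^{\mathrm{sep}}/K)$ in $\GL(T_{\mathfrak{l}}(\phi_i))$ is open; (ii) since the $\phi_i$ are pairwise non-isogenous, a Goursat-lemma argument will show that at almost all $\mathfrak{l}$ the image of $G_K$ in $\prod_i \GL(T_{\mathfrak{l}}(\phi_i))$ is open in the product; (iii) the Mordell-Weil modules $\phi_i(\mathcal{O}_K)$ are finitely generated $A$-modules by Poonen's theorem, so $N$ is a finitely generated $A$-module and $N/N_{\tor}$ is free of finite rank.

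First I would reduce the conclusion to a finite statement. Passing to the quotient $M=N/(\Lambda+N_{\tor})$, which is a finitely generated torsion $A$-module, it is enough to show that the image of $P$ vanishes in $M$; this in turn reduces to fixing one prime $\mathfrak{l}$ of $A$ dividing the annihilator of $M$ and proving $P\in \Lambda+N_{\tor}+\mathfrak{l}^n N$ for all $n$, after which finiteness of $M$ finishes the reduction.

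The next step is to deploy Kummer theory for $\widehat{\varphi}$. Choose $\mathfrak{l}^n$-division points $Q_P$ with $[\mathfrak{l}^n]Q_P=P$ in $\widehat{\varphi}(K^{\mathrm{sep}})$ and analogous $Q_\lambda$ for generators of $\Lambda$. Form the Kummer cocycle $c_P\colon \Gal(K^{\mathrm{sep}}/K(\widehat{\varphi}[\mathfrak{l}^n]))\to \widehat{\varphi}[\mathfrak{l}^n]$, $\sigma\mapsto \sigma(Q_P)-Q_P$. Combining the reduction hypothesis at a prime $\mathcal{P}$ of $\mathcal{O}_K$ with Chebotarev density applied to the splitting field $K(\widehat{\varphi}[\mathfrak{l}^n],\mathfrak{l}^{-n}P,\mathfrak{l}^{-n}\Lambda)$ converts the local data into the global Kummer-theoretic statement: for every $\sigma\in G_K$, the value $c_P(\sigma)$ lies in the $A_{\mathfrak{l}}$-submodule $W_\Lambda\subset T_{\mathfrak{l}}(\widehat{\varphi})$ generated by the Kummer images of the elements of $\Lambda$.

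The heart of the argument is then a linear-algebra statement in $T_{\mathfrak{l}}(\widehat{\varphi})=\bigoplus_{i=1}^{t} T_{\mathfrak{l}}(\phi_i)^{e_i}$. Using the openness from (i)-(ii), together with the hypothesis $d_i\geq e_i$, one shows that the $G_K$-span of the Kummer image of $P$ can fail to escape $W_\Lambda$ only if the class of $P$ modulo $\Lambda$ already vanishes in $(N/N_{\tor})\otimes_A A_{\mathfrak{l}}$; the rank bound is essential because it guarantees that $\GL_{d_i}(A_{\mathfrak{l}})$ acts with sufficiently large orbits on $e_i$-tuples of vectors in $A_{\mathfrak{l}}^{d_i}$ to detect any spurious linear relation. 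I expect the main difficulty to concentrate here, especially at the finitely many bad primes $\mathfrak{l}$ where the image of Galois is merely of finite index in the expected group, or where the Goursat step fails; as in \cite{bk11,bk13}, these require an effective descending argument controlling the conductor of the Galois image independently of $n$. Once the Kummer containment is forced, we obtain $P\in\Lambda+N_{\tor}+\mathfrak{l}^n N$ for every $n$, and the initial reduction completes the proof.
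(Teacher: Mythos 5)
There is a genuine gap. The step you yourself flag as ``the heart of the argument'' --- that openness of the adelic image together with $d_i\geq e_i$ forces the Kummer image of $P$ to escape $W_\Lambda$ unless $P$ already lies in $\Lambda+N_{\tor}$ --- is exactly where all the work in the paper is, and you do not supply it. The paper splits this into two substantial pieces: first, a reduction theorem (Theorem \ref{redthm0}/\ref{redthm1}) asserting that for any prescribed $A$-linearly independent points $x_{i,j}$ and any prescribed torsion values $T_{i,j}\in\phi_i[{\cal P}^m]$ there is a positive-density set of primes $\cal W$ with ${\mathrm{red}}_{\cal W}(x_{i,j})={\mathrm{red}}_{{\cal W}'}(T_{i,j})$ in the ${\pi}_{\cal P}$-primary part; its proof requires the full Kummer theory of Pink/H\"aberli (openness of $\Psi_{\cal P}$), a stabilization argument for the tower $K_{{\cal P}^k}(\frac{1}{\pi^k_{\cal P}}\Gamma)$, the existence of homotheties $1+\pi_{\cal P}^k u$ in the Galois image, and a Frobenius/Chebotarev comparison of two actions on ${\pi}_{\cal P}^{c_{i,j}-1}{\mathrm{red}}_{{\cal W}_1}(y_{i,j})$. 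Second, the passage from this reduction theorem to the theorem itself goes through the semisimple module theory of $\prod_i M_{e_i}(A)\otimes F$: one assumes $P\notin\Lambda$, extracts a coordinate $j_0$ with a ${\cal U}$-adic valuation mismatch, lifts the coordinate projection $\pi$ to an equivariant $\bm{\pi}$ via the trace isomorphism (Lemma \ref{trace}), and decomposes $\Im\bm{s}(i)$ into $k_i\leq e_i\leq d_i$ simple summands; the inequality $e_i\leq d_i$ is used precisely so that the $k_i$ chosen generators $\omega_k(i)$ can be matched with part of an $A_{\cal U}$-basis of $T_{\cal U}(\phi_i)$, whose linear independence then yields the contradiction. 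Merely invoking ``sufficiently large orbits of $\GL_{d_i}(A_{\cal U})$ on $e_i$-tuples'' is not a proof of this; note the paper's Theorem \ref{best} shows the statement genuinely fails for $e=d+1$, so the mechanism must be made explicit.

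Two further points. Your intermediate claim that Chebotarev converts the local hypothesis into ``$c_P(\sigma)\in W_\Lambda$ for every $\sigma\in G_K$'' is not justified as stated: the hypothesis at a prime $\cal P$ only produces some $\lambda_{\cal P}\in\Lambda$ depending on $\cal P$, with torsion ambiguity in the residue module, and upgrading this to a uniform containment of the cocycle is itself a nontrivial step (the paper avoids it entirely by running the argument in the contrapositive at a single well-chosen Chebotarev prime). Finally, your structural input (iii) is false: by Poonen's theorem $\phi_i({\cal O}_K)$ is the direct sum of a finite torsion module and a \emph{free $A$-module of rank $\aleph_0$}, hence is not finitely generated; finite generation of $N$ is a hypothesis of the theorem, not a consequence, and this is one of the points the paper explicitly warns about.
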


\begin{rem}\label{remarkO}
Let ${\End}_{K^{\mathrm{sep}}}({\phi}_{i})=A$
and $S_{i}$ denote the finite set of places of $K$ at which ${\phi}_i$ (understood as defined over $K$) has bad reduction (cf. Definition \ref{badp}) . Let $S={\cup}_{i=1}^t S_{i}.$ Notice that our proofs work for the $S$-integers 
${\cal O}_{K,S}$ i.e we can assume in Theorem \ref{localglobal} that the Drinfeld modules  ${\phi}_i, \, 1\leq i\leq t $ are defined over ${\cal O}_{K,S}$ (cf. Definition \ref{def3}).
\end{rem}
\begin{rem}\label{remark1}
Notice also that for any Drinfeld module ${\phi}: A \rightarrow K\{\tau \}$  defined over $K$ with the endomorphism ring $A$  there exists a minimal model over ${\Spec O}_{K,S_{\mathrm{bad}}},$ where  $S_{\mathrm{bad}}$ is the finite set 
of primes of bad reduction cf.  \cite{t93} Proposition 2.2.
\end{rem}

The basic definitions concerning Drinfeld modules and  reduction maps are given in the following sections. Let us  emphasize  the major differences here between the situations described in Theorems \ref{thm A} and \ref{localglobal0}.
Firstly, the category of Drinfeld modules is not semisimple, therefore we have to state our theorem for ${\mathbf t}$-modules that are products of Drinfeld modules.   Secondly, the Mordell-Weil group of a Drinfeld module of generic characteristic is not finitely generated, thus we have 
 to be careful to use  a finitely generated $A$-submodule of the Mordell-Weil group.  Finally, our proof relies on the {\em reduction theorem} below. In the proof of Theorem \ref{redthm0}, we use the Ribet-Bashmakov method \cite{r79}, developed  
 for Drinfeld modules in \cite{p16}, \cite{h11}. This method works nicely for Drinfeld modules ${\phi}$ for which ${\End}_{K^{\mathrm{sep}}}({\phi})=A.$

\begin{thm}\label{redthm0}
Let $A={\mathbb F}_q[t]$, ${\phi}_i$ for $1\leq i \leq t$ be  Drinfeld modules of generic characteristic defined over $K$ such that ${\End}_{K^{\mathrm{sep}}}({\phi}_{i})=A$, ${\cal P}\in M_{A}$ be a maximal ideal and ${\pi}_P$ its generator.
Let $x_{i,j}\in {\phi}_{i}({\cal O}_K)$ for some $s_{i}$ and $1\leq j \leq s_{i}$ be linearly independent elements over $A$ for each $1\leq i\leq t.$ There is a  set $W$ of prime ideals $\cal W$ of ${\cal O}_{K}$ of positive density such that  ${\mathrm{red}}_{\cal W}(x_{i,j})=0$ in $ {\phi}_{i}^{{\mathcal W}}({\mathcal O}_K/{{\cal W}})_{{\pi}_{\cal P}}$ for all $1\leq j\leq s_{i}$ and  $1\leq i\leq t.$
\end{thm}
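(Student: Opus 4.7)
The plan is to combine Pink's theorem on the openness of the $\pi_{\mathcal{P}}$-adic Galois image for Drinfeld modules with $\End_{K^{\mathrm{sep}}} = A$, together with a Ribet-Bashmakov style Kummer argument (as adapted for Drinfeld modules in \cite{p16} and \cite{h11}), and then apply the Chebotarev density theorem to a carefully chosen finite Galois extension.

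First, I would fix a sufficiently large integer $n$ and form the tower $K \subset K_n \subset L_n$, where $K_n = K(\bigcup_i \phi_i[\pi_{\mathcal{P}}^{n+1}])$ and $L_n = K_n(y_{i,j}^{(n)})$ for chosen $y_{i,j}^{(n)} \in \phi_i(K^{\mathrm{sep}})$ satisfying $\phi_i(\pi_{\mathcal{P}}^n)(y_{i,j}^{(n)}) = x_{i,j}$. By Pink's openness theorem (invoking $\End_{K^{\mathrm{sep}}}(\phi_i) = A$) together with a Goursat-type argument using the pairwise non-isogeny of the $\phi_i$ (implicit from the context of the main theorem), the image of $G_K$ in $\prod_i \GL(T_{\pi_{\mathcal{P}}}(\phi_i))$ is open in the product. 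For $n$ large enough I can therefore select $\tau \in \Gal(K_n/K)$ acting on each $T_{\pi_{\mathcal{P}}}(\phi_i)/\pi_{\mathcal{P}}^{n+1}$ as $1 + \pi_{\mathcal{P}}^n \cdot \mathrm{Id}$; in particular $\tau$ is trivial on $\phi_i[\pi_{\mathcal{P}}^n]$ while $(\tau - 1)/\pi_{\mathcal{P}}^n$ is a unit modulo $\pi_{\mathcal{P}}$.

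Next, I would analyze the Kummer homomorphism $c \colon \Gal(L_n/K_n) \to \bigoplus_{i,j} \phi_i[\pi_{\mathcal{P}}^n]$, $\eta \mapsto (\eta(y_{i,j}^{(n)}) - y_{i,j}^{(n)})_{i,j}$. Using the Ribet-Bashmakov method and the $A$-linear independence of the $x_{i,j}$ within each $\phi_i(\mathcal{O}_K)$, one shows that the image of $c$ has bounded cokernel, and for $n$ sufficiently large $c$ is surjective. Lift $\tau$ to some $\sigma_0 \in \Gal(L_n/K)$; since $\tau$ is trivial on $\phi_i[\pi_{\mathcal{P}}^n]$, the map $\eta \mapsto c(\sigma_0 \eta) = c(\sigma_0) + c(\eta)$ is again surjective, so one can choose $\eta$ with $c(\sigma_0 \eta) = 0$. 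Set $\sigma = \sigma_0 \eta$, so that $\sigma|_{K_n} = \tau$ and $\sigma$ fixes each $y_{i,j}^{(n)}$.

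Chebotarev applied to $L_n/K$ then gives the desired set $W$ of primes $\mathcal{W}$ of good reduction with $\mathrm{Frob}_{\mathcal{W}} = \sigma$, of positive density $1/[L_n:K]$. For any $\mathcal{W} \in W$: the fact that $\sigma$ fixes $y_{i,j}^{(n)}$ forces $\mathrm{red}_{\mathcal{W}}(x_{i,j}) \in \pi_{\mathcal{P}}^n \phi_i^{\mathcal{W}}(\kappa_{\mathcal{W}})$, while the action of $\mathrm{Frob}_{\mathcal{W}}$ as $1+\pi_{\mathcal{P}}^n \cdot \mathrm{Id}$ modulo $\pi_{\mathcal{P}}^{n+1}$ yields $\phi_i^{\mathcal{W}}(\kappa_{\mathcal{W}})_{\pi_{\mathcal{P}}} \cong T_{\pi_{\mathcal{P}}}(\phi_i)/(\mathrm{Frob}_{\mathcal{W}} - 1)T_{\pi_{\mathcal{P}}}(\phi_i) \cong (A/\pi_{\mathcal{P}}^n)^{d_i}$, of exponent $\pi_{\mathcal{P}}^n$. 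Hence $\pi_{\mathcal{P}}^n$ annihilates the $\pi_{\mathcal{P}}$-part, and the two facts together give $\mathrm{red}_{\mathcal{W}}(x_{i,j}) = 0$ in $\phi_i^{\mathcal{W}}(\kappa_{\mathcal{W}})_{\pi_{\mathcal{P}}}$. The hard part is the surjectivity of $c$: the Ribet-Bashmakov technique must be carefully adapted so that $A$-linear independence of $\{x_{i,j}\}_j$ in the (not finitely generated) Mordell-Weil type module $\phi_i(\mathcal{O}_K)$ translates into a non-degenerate Kummer pairing. The hypothesis $\End_{K^{\mathrm{sep}}}(\phi_i) = A$ is essential here since it makes Pink's openness theorem available in its sharpest form, and for several $\phi_i$ one additionally needs the Goursat-type argument resting on pairwise non-isogeny.
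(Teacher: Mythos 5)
Your overall strategy coincides with the paper's: use the open--image theorem to produce a homothety $1+\pi_{\mathcal P}^{k}u$ on the $\pi_{\mathcal P}$-power torsion, use Kummer theory to arrange that the same Galois element fixes the $\pi_{\mathcal P}^{k}$-division points of the $x_{i,j}$, apply Chebotarev, and then compare the $\pi_{\mathcal P}$-divisibility of $\mathrm{red}_{\mathcal W}(x_{i,j})$ with the exponent of the $\pi_{\mathcal P}$-primary part of the reduced Mordell--Weil group. Your endgame (identifying $\phi_i^{\mathcal W}(k_{\mathcal W})_{\pi_{\mathcal P}}$ with $T_{\mathcal P}(\phi_i)/(\mathrm{Frob}_{\mathcal W}-1)T_{\mathcal P}(\phi_i)$ and reading off that its exponent is $\pi_{\mathcal P}^{n}$) is in fact a cleaner formulation than the paper's contradiction argument with the $\pi_{\mathcal P}$-orders $c_{i,j}$.

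The gap is the step ``for $n$ sufficiently large $c$ is surjective.'' For a \emph{fixed} ${\mathcal P}$ the Kummer theory of Pink/H\"aberli (Proposition \ref{prop1}) gives only openness: there is an $m$, independent of $n$, with $\pi_{\mathcal P}^{m}\prod_i T_{\mathcal P}(\phi_i)^{s_i}\subset \mathrm{Im}\,\Psi_{\mathcal P}$; exact surjectivity is guaranteed only for almost all ${\mathcal P}$, whereas the theorem quantifies over every ${\mathcal P}\in M_A$. Increasing $n$ does not repair an index-$\pi_{\mathcal P}^{m}$ deficiency at the bottom: the image of your $c$ on $\Gal(L_n/K_n)$ is only guaranteed to contain $\bigoplus_{i,j}\phi_i[\pi_{\mathcal P}^{\,n-m}]$, so you can only correct $\sigma_0$ so as to fix the level-$(n-m)$ division points; combined with a homothety $1+\pi_{\mathcal P}^{n}u$ this yields only $\pi_{\mathcal P}^{m}\cdot\mathrm{red}_{\mathcal W}(x_{i,j})=0$ in the $\pi_{\mathcal P}$-part, not vanishing. (Changing the choice of $y^{(n)}_{i,j}$ does not help: it alters $c(\sigma_0)$ by $(\rho(\sigma_0)-1)t$ with $t\in\phi_i[\pi_{\mathcal P}^{n}]$, which is zero since $\rho(\sigma_0)$ is trivial on $\phi_i[\pi_{\mathcal P}^{n}]$, so the obstruction to your correction is genuine.) The paper avoids surjectivity altogether: injectivity of the mod-$\pi_{\mathcal P}^{k}$ Kummer maps forces the degrees $[K_{{\mathcal P}^{k}}(\tfrac{1}{\pi_{\mathcal P}^{k}}\Gamma):K_{{\mathcal P}^{k}}]$ to be non-increasing, hence eventually constant, which yields the linear disjointness $K_{{\mathcal P}^{k}}(\tfrac{1}{\pi_{\mathcal P}^{k}}\Gamma)\cap K_{{\mathcal P}^{k+1}}=K_{{\mathcal P}^{k}}$ of (\ref{intersection}); one then glues the identity on $K_{{\mathcal P}^{k}}(\tfrac{1}{\pi_{\mathcal P}^{k}}\Gamma)$ with the homothety on $K_{{\mathcal P}^{k+1}}$, so that the level of the homothety matches the level of the fixed division points. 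You need to import this stabilization argument (or an equivalent substitute); as written, the correction step fails for those ${\mathcal P}$ at which the local Kummer image is a proper open subgroup.
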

As a corollary we obtain the following:
\begin{thm}\label{redthm1}
Let $A={\mathbb F}_q[t]$, ${\phi}_i$ for $1\leq i \leq t$ be  Drinfeld modules of generic characteristic defined over $K$ such that ${\End}_{K^{\mathrm{sep}}}({\phi}_{i})=A.$
Let  ${\cal P}\in M_{A},$  $m\in {\mathbb N}\cup \{0\}$.
Let $x_{i,j}\in {\phi}_{i}({\cal O}_K)$ for some $s_{i}$ and  $1\leq j \leq s_{i}$ be linearly independent elements over $A$  and let $T_{i,j}\in {\phi}_{i}[{\cal P}^m]$  be arbitrary torsion points for all $1\leq j\leq s_{i}$ and $1\leq i\leq t.$     
Then there is a set $W$ of prime ideals $\cal W$ of ${\cal O}_{K}$  of positive density such that  
$${\mathrm{red}}_{{\cal W}^{\prime}}(T_{i,j})={\mathrm{red}}_{{\cal W}}(x_{i,j})$$ in $ {\phi}_{i}^{{\mathcal W}}({\mathcal O}_K/{{\cal W}})_{{\pi}_{\cal P}}$ for all $1\leq j\leq s_{i}$ and  $1\leq i\leq t,$ where 
${\cal W}^{\prime}$ is a prime in $O_L$ over ${\cal W},$ where $L$  is the compositum   of fields ${K({\phi}_{i}[{\cal P}^m])}\, {\mathrm{for}} \, 1\leq i\leq t$, ${\mathrm{red}}_{{\cal W}^{\prime}}:{\phi}_{i}({\cal O_L})\rightarrow {\phi}_{i}^{\cal W}({k_{{\cal W}^{\prime}}})$ and ${k_{{\cal W}^{\prime}}}={{{\cal O}_L}}/{{\cal W}^{\prime}}.$
\end{thm}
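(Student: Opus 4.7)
The plan is to deduce Theorem \ref{redthm1} directly from Theorem \ref{redthm0} via base change to a field where the torsion points become rational. Let $L$ denote the compositum of the fields $K(\phi_i[\mathcal{P}^m])$ for $1\le i\le t$; it is a finite extension of $K$, and by construction every $T_{i,j}$ lies in $\phi_i(\mathcal{O}_L)$ (possibly after enlarging $S$ by the finitely many primes of bad reduction, as in Remark \ref{remarkO}). Each $\phi_i$ remains a Drinfeld module of generic characteristic over $\mathcal{O}_L$, and since $L\subset K^{\mathrm{sep}}$ one still has $\End_{L^{\mathrm{sep}}}(\phi_i)=\End_{K^{\mathrm{sep}}}(\phi_i)=A$, so Theorem \ref{redthm0} is applicable to the $\phi_i$ viewed over $\mathcal{O}_L$.

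Form the translated points
\[
y_{i,j}\;:=\;x_{i,j}-T_{i,j}\;\in\;\phi_i(\mathcal{O}_L),
\]
where subtraction is taken in the $A$-module $\phi_i(\mathcal{O}_L)$. The central claim is that for each fixed $i$ the family $\{y_{i,j}\}_{j=1}^{s_i}$ is still $A$-linearly independent. Indeed, a hypothetical relation $\sum_j a_j y_{i,j}=0$ rearranges to $\sum_j a_j x_{i,j}=\sum_j a_j T_{i,j}$, whose right-hand side is annihilated by $\pi_\mathcal{P}^m$; multiplication by $\pi_\mathcal{P}^m$ then yields $\sum_j(\pi_\mathcal{P}^m a_j)\cdot x_{i,j}=0$ in $\phi_i(\mathcal{O}_L)$. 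Since the inclusion $\phi_i(\mathcal{O}_K)\hookrightarrow\phi_i(\mathcal{O}_L)$ is an injective $A$-module map, the assumed $A$-independence of the $x_{i,j}$ in $\phi_i(\mathcal{O}_K)$ forces $\pi_\mathcal{P}^m a_j=0$ for every $j$, and therefore $a_j=0$ because $A$ is an integral domain.

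Now apply Theorem \ref{redthm0} over $L$ to the Drinfeld modules $\phi_i$ and the $A$-independent tuples $\{y_{i,j}\}$. This produces a set $W'$ of prime ideals $\mathcal{W}'\lhd\mathcal{O}_L$ of positive density such that $\mathrm{red}_{\mathcal{W}'}(y_{i,j})=0$ in $\phi_i^{\mathcal{W}'}(k_{\mathcal{W}'})_{\pi_\mathcal{P}}$ for all $1\le j\le s_i$ and $1\le i\le t$. Using $A$-linearity of the reduction map, this is equivalent to $\mathrm{red}_{\mathcal{W}'}(x_{i,j})=\mathrm{red}_{\mathcal{W}'}(T_{i,j})$ in the $\pi_\mathcal{P}$-primary component, which is precisely the equality demanded by the corollary. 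Setting $W:=\{\mathcal{W}'\cap\mathcal{O}_K:\mathcal{W}'\in W'\}$, the contraction map $\mathcal{W}'\mapsto\mathcal{W}$ is at most $[L:K]$-to-one, so positive density of $W'$ in $\mathcal{O}_L$ descends to positive density of $W$ in $\mathcal{O}_K$.

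All of the genuinely arithmetic content—the Ribet--Bashmakov style control of $\pi_\mathcal{P}$-primary reductions—has already been absorbed into Theorem \ref{redthm0}. The only non-formal step I anticipate as a (small) obstacle is the verification that the translated points $y_{i,j}$ remain $A$-linearly independent after simultaneously passing to $L$ and subtracting the torsion translates; everything else is a clean formal translation.
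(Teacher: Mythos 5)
Your proposal is correct and is essentially the paper's own proof, which consists of the single line ``Put $x_{i,j}=P_{i,j}-T_{i,j}$, take $L$ instead of $K$ and apply Theorem \ref{redthm}''; you have simply spelled out the details (the preservation of $A$-linear independence under torsion translation and the descent of positive density from $\mathcal{O}_L$ to $\mathcal{O}_K$) that the paper leaves implicit.
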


\begin{rem}\label{rma}
Here ${\phi}_{i}^{{\mathcal W}}({\mathcal O}_K/{{\cal W}})_{{\pi}_{\cal P}} = \{ {\alpha}\in {\phi}_{i}^{{\cal W}} ({\mathcal O}_K/{{\cal W}}) |\quad \exists  \, k\in {\mathbb N}\,\, {\text{\rm such that}} \,\ {\pi}_{\cal P}^k{\alpha}=0  \,\}     $ 
and the equality ${\mathrm{red}}_{\cal W}(x_{i,j})=0$  (resp. ${\mathrm{red}}_{{\cal W}^{\prime}}(T_{i,j})={\mathrm{red}}_{{\cal W}}(x_{i,j})$ ) in $ {\phi}_{i}^{{\mathcal W}}({\mathcal O}_K/{{\cal W}})_{{\pi}_{\cal P}}$ means, by slight abuse of notation that it holds after projection ${\phi}_{i}^{{\mathcal W}}({\mathcal O}_K/{{\cal W}})\rightarrow {\phi}_{i}^{{\mathcal W}}({\mathcal O}_K/{{\cal W}})_{{\pi}_{\cal P}}.$
\end{rem}

We view Theorems \ref{redthm0} and \ref{redthm1} as interesting on their own, not only as one of the key steps in proving Theorem \ref{localglobal0}. Some other applications are described in Section 6, where we show that the recent results of S.Bara{\'n}czuk can be extended to  the situation we consider.
The content of the paper is as follows. In Section 2 we review some general definitions and facts concerning Drinfeld modules. The reader is advised to consult the  general sources \cite{g96},\cite{l96},\cite{bp09}.
Section 3 is devoted to Kummer theory \cite{p16}, \cite{h11}. In Section 4 we prove the reduction theorems, i.e. Theorems \ref{redthm0} and \ref{redthm1}. 
In Section 5 we give a proof of the local to global principle for ${\mathbf t}$-modules that are products of Drinfeld modules. In Section 6 we state theorems analogous to S.Bara{\'n}czuk's  and indicate how to prove them.

We  also  would  like to thank the referee for numerous valuable remarks and suggestions which helped us to improve the manuscript.

\section{Preliminaries on Drinfeld modules}
Let ${\mathbb F}_{q}$ be a finite field with $q=p^{m}$ elements. Let $F$ be a field of transcendence degree $1$ over ${\mathbb F}_{q},$  i.e. a function field of a smooth projective curve $X$ over ${\mathbb F}_{q}$ and let $A$ be the ring of elements of $F$ regular outside a fixed closed point 
$\infty$. Let $K$ be a finitely generated field over ${\mathbb F}_{q}$. The ring of ${\mathbb F}_{q}$-linear endomorphisms ${\End}_{{\mathbb F}_{q}}({\mathbb G}_{{a},K})$ of the additive algebraic group over $K$ is the twisted  (non-commutative) polynomial ring in one variable $K\{\tau\}.$ The endomorphism $\tau$ corresponds to $u\rightarrow u^q$ and the commutation relation is ${\tau}u=u^q\tau , \, u\in K.$ 

\begin{rem}
We prove our main theorems for the case $F={\mathbb F}_{q}(t)$  and $A={\mathbb F}_{q}[t],$ so in our case $X={\mathbb P}^{1}_{{\mathbb F}_{q}}.$
\end{rem}

\begin{dfn}\label{def1}
An $A$-field $K$ is a fixed morphism  
 ${\iota}: A \rightarrow K$. The kernel of $\iota$ is a prime ideal $\cal P$ of $A$ called the characteristic. The characteristic of $\iota$ is called finite if ${\cal P}\neq 0,$ or generic (zero) if ${\cal P}= 0.$
\end{dfn}
\begin{dfn}\label{def2}
A Drinfeld $A$-module is a homomorphism $\phi : A\rightarrow K\{\tau\},\,\, a\rightarrow {\phi}_{a}\,,$ of ${\mathbb F}_{q}$-algebras such that 
\begin{enumerate}
\item[1.] $D\circ {\phi}={\iota},$
\item[2.] for some $a\in A, \, {\phi}_{a}\neq {\iota}(a){\tau}^{0},$
\end{enumerate}
where $D({\sum}_{i=0}^{i={\nu}}\,\,a_{i}{\tau}^{i})=a_{0}. $ The characteristic of a Drinfeld module is the characteristic of ${\iota}.$
\end{dfn}
\begin{dfn}\label{def3}
We say that ${\phi}$ is defined over ${\cal O}_K$ or ${\phi}$ has integral coefficients  iff  \linebreak ${\phi}: A\rightarrow {\cal O}_K\{\tau\}.$ 
Similarly, if $S$ is a finite set of places of ${\cal O}_K$ and ${\cal O}_{K,S}$ denotes the ring of $S$-integers in ${\cal O}_K$ we say that ${\phi}$ is defined over ${\cal O}_{K,S}$  iff 
${\phi}: A\rightarrow {\cal O}_{K,S}\{\tau\}.$
\end{dfn}
Let $\phi$ be a Drinfeld module over the $A$-field $K.$
Let
\begin{equation*}
{\mu}_{\phi}(a):=-\deg {\phi}_{a}(\tau),\quad {\mu}_{\phi}(0)=-{\infty}.
\end{equation*}
It is easy to prove (\cite{g96} Lemma 4.5.1 ) that  ${\mu}_{\phi}(a)=-d \deg(a)$ for any $a.$ The integer $d$ is called the rank of the Drinfeld module ${\phi}.$
Assume that $K$ is an $A$-field with a non-trivial discrete valuation $v$ and $v(A)\ge 0. $ Let ${\cal O}_{v}=\{{\gamma}\in K \, | \, \, v(\gamma)\ge 0\}$ be the valuation ring of $K.$  
Let ${\phi}$ be a Drinfeld module with integral coefficients, i.e. every ${\phi}_{a}\in {\cal O}_K\{\tau\}.$  There exists a reduction ${\phi}^v$ of a Drinfeld module defined over $k_v={\cal O}_v/{\frak{m}}_{v}$ (cf. \cite{g96} Definition 4.10.1).  By \cite{g96} Lemma 4.10.2, for any Drinfeld module defined over $K$ there exists a Drinfeld module with integral at $v$ coefficients isogenous to it. For the definition of an isogeny of Drinfeld modules see \cite{g96} Definition 4.4.3. 

\begin{dfn}\label{def5}
Let ${\phi}$ be a Drinfeld module over $K,$  and $L$ be an algebraic extension of $K.$ The Mordell-Weil group ${\phi}(L)$ is the additive group of $L$ viewed as an $A$-module via evaluation of the polynomials ${\phi}_{a},\,\, a\in A.$
\end{dfn}
The rank of an $A$-module $M$ is the dimension of the $F$-vector space $M{\otimes}_{A}F.$ An $A$-module is called tame if all its submodules  of finite rank are finitely generated.
B. Poonen (\cite{p95} Theorem 1) proved that ${\phi}({L})$ is the direct sum of a finite torsion submodule and a free $A$-module of rank ${\aleph}_{0}.$ 
However, by \cite{p95} Lemma 4, ${\phi}(L)$ is a tame $A$-module.

Let $I\subseteq A$ be an ideal. In general, for any $A$,  $I$ is generated by two elements $\{a_{i_{1}}, a_{i_{2}}\}.$ Let ${\phi}_{I}$ be a monic polynomial   which is a right greatest common divisor of  ${\phi}_{a_{i_{1}}}$ and 
${\phi}_{a_{i_{2}}}.$  It exists since in $K\{\tau\}$ one has a right division algorithm ( cf. \cite{g96}). ${\phi}_{I}$ is a generator of the left ideal ( in $K\{\tau\}$ ) generated by ${\phi}_{a_{i_{1}}}$ and 
${\phi}_{a_{i_{2}}}$ (\cite{g96} Definition 4.4.4). Let $\bar K$ denote an algebraic  closure of $K.$
\begin{dfn}\label{def6}
For an ideal $I$ let ${\phi}[I]\subset {\phi}(\bar K)$ be the finite subgroup of roots of ${\phi}_{I}.$
\end{dfn}
Notice that since $I$ is an ideal of $A$,  ${\phi}[I]$ is stable under $\{{\phi}_{a}\},\,\, a\in A$ and the Galois group $G_{K}=\Gal (K^{\text{\rm sep}}/K),$ of the separable closure $K^{\text{\rm sep}}\subset {\bar K},$ acts on ${\phi}[I].$
For any ideal $I$ prime to characteristic we have (cf. \cite{ro02} Corollary to Theorem 13.1, \cite{g96} )
\begin{equation*}\label{rediso}
{\phi}[I]\cong (A/I)^{d}
\end{equation*}
and thus a representation
\begin{equation*}\label{redrep}
{\bar\rho}_{I}: G_{K} \rightarrow {\Aut}_{A}({\phi}[I])\cong {\GL}_{d}(A/I).
\end{equation*}
\begin{dfn}\label{def6}
Let ${\phi}$ be Drinfeld module and $\cal P$ be a maximal ideal different from the characteristic ${\cal P}_{0}.$ The ${\cal P}$-adic Tate module is defined as 
\begin{equation}\label{eq1}
T_{\cal P}({\phi})={\Hom}_{A}(F_{\cal P}/A_{\cal P}, {\phi}[{\cal P}^{\infty}])
\end{equation}
where 
${\phi}[{\cal P}^{\infty}]={\bigcup}_{m\ge 1}\,\,{\phi}[{\cal P}^m]$
and $F_{\cal P}$ (resp. $A_{\cal P}$) is the ${\cal P}$-adic completion of $F$ (resp. $A$).
\end{dfn}
Let ${\phi}^{{n}} : {\phi}[{\cal P}^{n+1}]\rightarrow {\phi}[{\cal P}^{n}] $ be  the multiplication by ${\pi}_P$ map.
Then (\ref{eq1}) can be written in the following way
\begin{equation}\label{eq2}
T_{\cal P}({\phi})\cong \varprojlim  {\phi}[{\cal P}^m], \quad  .
\end{equation}
Notice that  (\ref{eq2}) becomes a free $A_{\cal P}$-module of rank $d$
and $G_{K}$ acts on $T_{\cal P}({\phi})$ continuously. Since the action  of $G_{K}$ commutes with the multiplication by elements of $A_{\cal P}$ we obtain a ${\cal P}$-adic representation
\begin{equation}\label{padicrep}
{\rho}_{\cal P} : \, G_{K}\rightarrow {\Aut}_{A_{\cal P}}(T_{\cal P}({\phi}))\cong {\GL}_{d}(A_{\cal P}).
\end{equation}
Let ${\mathrm{red}}_{\cal P}: {\GL}_{d}(A_{\cal P})\rightarrow  {\text{\rm GL}}_{d}({k}_{\cal P})$ be the projection map. Then we have ${\bar\rho}_{\cal P}={\mathrm{red}}_{\cal P}\circ {\rho}_{\cal P}.$

Let $M_{A}$ be the set of all maximal ideals of $A$ and let $
{\hat A}={\prod}_{{\cal P}\in M_{A}} A_{{\cal P}}.$
Under the assumption that $\phi$ is of generic characteristic and ${\End}_{K^{\mathrm{sep}}}(\phi)=A$ in \cite{pr109}, the authors proved that the adelic representation
\begin{equation}\label{openimage}
{\rho}_{\text{\rm ad}}: G_{K} \rightarrow {\GL}_{d}({\hat A})
\end{equation}
has open image.

For our purposes, especially for the proof of Theorem \ref{best}, we need the following general result  ( \cite{wa01} Proposition 6) concerning Mordell-Weil groups of  Drinfeld modules defined  over {\em finitely generated}  ( over the field of fractions $F$ of $A$)     fields $L.$
\begin{prop}[\cite{wa01}]\label{wang}
Let $\bar L$  (resp.   $L^{\mathrm sep}$)  be an algebraic ( resp. separable ) closure of $L.$
Each of the $A$-modules ${\phi}(\bar L)$ and ${\phi}(L^{\mathrm sep})$ is the direct sum of a $F$-vector space of dimension ${\aleph}_0$ with a torsion submodule. Furthermore, when the $A$-characteristic is generic, the torsion submodule of each case is isomorphic to $(F/A)^{d}$ .When the $A$-characteristic of $L$ is ${\cal P}$,the torsion submodule of ${\phi}(\bar L)$ (resp. ${\phi}(L^{\mathrm sep})$ ) is isomorphic to ${\oplus}_{{\beta}\neq {\cal P}} (F_{\beta}/A_{\beta})^{d} \oplus (F_{\cal P}/A_{\cal P})^{d-{\bar h}} $ (resp. a submodule between ${\oplus}_{{\beta}\neq {\cal P} }(F_{\beta}/A_{\beta})^{d} $ and ${\oplus}_{{\beta}\neq {\cal P}} (F_{\beta}/A_{\beta})^{d} \oplus (F_{\cal P}/A_{\cal P})^{d-{\bar{h}}} )$ where $\beta$ is taken in ${\Spec}A$, $d$ and ${{\bar h}}$ are the rank and the height of ${\phi}$ respectively.

\end{prop}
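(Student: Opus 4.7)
The plan is to decompose the analysis into three independent pieces: (a) the torsion submodule, (b) the torsion-free quotient, and (c) the splitting of the exact sequence. First, I would determine ${\phi}(\bar L)_{\tor} = \bigcup_I {\phi}[I]$ prime by prime. For any maximal ideal $\beta \neq {\cal P}$, the polynomial ${\phi}_b$ (for a generator $b$ of $\beta$) is separable since $D{\phi}_b = \iota(b) \neq 0$; hence ${\phi}[\beta^n] \cong (A/\beta^n)^d$ and the elements already lie in $L^{\mathrm{sep}}$, so passing to the colimit yields a copy of $(F_\beta/A_\beta)^d$ in both ${\phi}(\bar L)$ and ${\phi}(L^{\mathrm{sep}})$. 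At the characteristic $\beta = {\cal P}$, the height $\bar h$ is defined so that the separable part of ${\phi}_{\pi_{\cal P}}$ has $A$-module rank $d - \bar h$, whence ${\phi}[{\cal P}^\infty](\bar L) \cong (F_{\cal P}/A_{\cal P})^{d-\bar h}$. Inside $L^{\mathrm{sep}}$, the inseparable part forces some ${\cal P}$-power points to live only in purely inseparable extensions, producing the stated sandwiched submodule.

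Second, the quotient $V = {\phi}(\bar L)/{\phi}(\bar L)_{\tor}$ is torsion-free by construction, and ${\phi}(\bar L)$ is divisible because for every $a \neq 0$ the polynomial ${\phi}_a$ is nonconstant and hence surjective on $\bar L$. A torsion-free divisible $A$-module is the same thing as an $F$-vector space, and any $F$-vector space is an injective $A$-module (being a divisible module over a Dedekind domain). Therefore the sequence
\[
0 \longrightarrow {\phi}(\bar L)_{\tor} \longrightarrow {\phi}(\bar L) \longrightarrow V \longrightarrow 0
\]
splits. The same reasoning applies verbatim to ${\phi}(L^{\mathrm{sep}})$, since for $a \notin {\cal P}$ the polynomial ${\phi}_a$ is separable, so the non-${\cal P}$-part of division of any element remains in $L^{\mathrm{sep}}$, yielding the required divisibility of the torsion-free quotient.

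Third, the dimension count $\dim_F V = \aleph_0$. The upper bound is immediate: $L$ is finitely generated over $\mathbb F_q$, hence countable, so $\bar L$ is countable and therefore $V$ has $F$-dimension at most $\aleph_0$. The lower bound is the substantive content; I would adapt Poonen's strategy for the global-field case. Pick a non-torsion point $x_0 \in {\phi}(L_0)$ in some finite extension $L_0/L$ (produced by a height/specialization argument, or directly from Poonen's result for global subfields), and then construct an infinite $A$-independent family $\{x_n\}$ by successively adjoining $\beta_n$-division points of $x_0$ for a sequence of distinct primes $\beta_n$. The $A$-independence is verified by reading off the Galois action on the Tate modules at the various $\beta_n$ and noting that the corresponding Kummer-type extensions are pairwise linearly disjoint, an input supplied by the fact that the adelic image is open since $\End_{K^{\mathrm{sep}}}({\phi}) = A$.

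The main obstacle is precisely this lower bound, where the Galois-theoretic control of division towers must be made quantitative enough to separate countably many would-be dependent relations. A secondary but genuine subtlety, specific to the finite-characteristic $L^{\mathrm{sep}}$ statement, is that one cannot in general pin down the intermediate torsion submodule more sharply than the stated sandwich: it depends on the connected-\'etale decomposition of ${\phi}[{\cal P}^\infty]$ over the completion at ${\cal P}$ and on how much of the formal-module part becomes separable over $L^{\mathrm{sep}}$, which is not determined by $d$ and $\bar h$ alone.
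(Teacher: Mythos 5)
The paper itself gives no proof of this statement --- it is imported verbatim as Proposition 6 of Wang \cite{wa01} --- so your attempt can only be measured against the standard argument behind that result. Your skeleton (compute the torsion prime by prime; show the torsion-free quotient is an $F$-vector space; split; count dimensions) is the right one, and the torsion computation for $\beta\neq{\cal P}$ together with the height argument at ${\cal P}$ is fine. But two steps do not go through as written. The splitting argument is backwards: a short exact sequence $0\to T\to M\to V\to 0$ is not split by injectivity of the \emph{quotient} $V$; you need $V$ projective or $T$ injective. Concretely, $0\to\bigoplus_p\Z/p\to\prod_p\Z/p\to Q\to 0$ has torsion-free divisible (hence injective) quotient $Q$ yet does not split, since $\prod_p\Z/p$ contains no nonzero divisible submodule. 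What actually saves the day for ${\phi}(\bar L)$, and for ${\phi}(L^{\mathrm{sep}})$ in generic characteristic, is that the whole module is divisible (every ${\phi}_a$ is onto $\bar L$, resp.\ is separable), hence injective over the PID $A$, and the structure theorem for divisible modules over a Dedekind domain gives the decomposition outright. In the finite-characteristic separable case neither the module, nor its torsion submodule, nor --- on the strength of your argument --- the quotient is ${\pi}_{\cal P}$-divisible: your remark that the non-${\cal P}$-part of division stays in $L^{\mathrm{sep}}$ only yields divisibility by elements prime to ${\cal P}$, so you have not shown the quotient is an $F$-vector space, let alone that the sequence splits; this case needs a genuine extra argument.

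Second, your route to the lower bound $\dim_F V\geq\aleph_0$, which you identify as the main obstacle, leans on hypotheses the proposition does not grant: openness of the adelic image requires ${\End}_{K^{\mathrm{sep}}}({\phi})=A$ \emph{and} generic characteristic (Pink--R\"utsche), whereas the statement covers arbitrary ${\phi}$, including finite characteristic, where the endomorphism ring always strictly contains $A$ (it contains Frobenius). No division towers are needed: the Mordell--Weil theorem for ${\phi}(L)$ itself (Poonen \cite{p95} for $L$ global, Wang's main theorem for $L$ finitely generated) already provides a free $A$-submodule of rank $\aleph_0$, and $A$-independent points of the free part remain $F$-independent in ${\phi}(\bar L)\otimes_A F$ because tensoring with $F$ kills only torsion. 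Your countability upper bound is correct.
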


\section{Kummer theory}
From now on we assume that $A={\mathbb F}_{q}[t]$ is the  ring of polynomials in one variable and $F={\mathbb F}_{q}(t)$ is the field of rational functions over ${\mathbb F}_q.$
In order to prove  Theorem \ref{redthm0} we need Kummer theory in the context of Drinfeld modules, which corresponds to one constructed  by Ribet  \cite{r79} for the extensions of abelian varieties by tori.
Such an extension was developed in \cite{h11} and \cite{p16}. Now we recall  from these references the relevant facts below. For a detailed exposition see the original sources.

Consider a Drinfeld module ${\phi}: A\rightarrow K\{\tau\}$ of generic characteristic. We will consider $K^{\text{\rm sep}}$ as an $A$-module via ${\phi}.$ We assume that ${\End}_{K^{\mathrm{sep}}}(\phi)=A.$
Let $s\ge1$ and ${\Lambda}$ be an $A$-submodule of $K$ generated by $s$  $A$-linearly independent elements $x_{1},\dots ,x_{s}.$
Let $y_{i}\in {\phi}^{-1}_{I}(\{x_{i}\})\subset K^{\text{\rm sep}}.$ This is possible since ${\phi}_{I}-x_{i}$ is a separable polynomial.
Define 
\begin{equation}\label{eta}
{\eta}: G_{K} \rightarrow {\phi}[I] ^s\cong {\text{\rm Mat}}_{d\times s}(A/I), \quad {\eta}(\sigma)=({\sigma}(y_1)-y_{1},\dots , {\sigma}(y_s)-y_{s})
\end{equation}

and 
\begin{equation}\label{PhiI}
{\Phi}_{I}: G_{K} \rightarrow  {\text{\rm Mat}}_{d\times s}(A/I)\rtimes {\GL}_{d}(A/I), \quad {\Phi}_{I}(\sigma)=({\eta}(\sigma) , {\bar{\rho}}_{I}(\sigma)).
\end{equation}

The action of ${\GL}_{d}$ on ${\text{\rm Mat}}_{d\times s}$ is given by matrix multiplication.
     Now,  following \cite{h11}, we describe the map analogous to (\ref{PhiI})  for $A_{\cal P}, \,{\cal P}\in M_{A}.$

Let
${{\phi}[{{\cal P}^{\infty}}]= {\bigcup}_{n\geq 0}{\phi}[{\cal P}^{n}]\subset K^{\text{\rm sep}}, {\phi}[{\cal P}^{0}]:=0},$
$K_{{\cal P}^{\infty}}=K({\phi}[{{\cal P}^{\infty}}]).$ Let $K_{{\text{\rm ad}}}$ be the \linebreak compositum of the fields $K_{{\cal P}^{\infty}}$ for all ${{\cal P}\in M_{A}}.$ 
and ${\pi}_{{\cal P}}^{n}$ be a generator of ${\cal P}^n.$ For an $A$-submodule $M$ of $K^{\text{\rm sep}}$ and $n\geq 0$ denote as 
${\phi}_{{\pi}_{{{\cal P}}}^{n}}^{-1}(M)\subset K^{\text{\rm sep}}$
the inverse image of $M$ under the endomorphism ${\phi}_{{\pi}_{{{\cal P}}}^{n}}$ (cf. Definition \ref{def2} ).
By a slight abuse of notation, denote  by ${\phi}^{n}$  (cf. (\ref{eq2} )) the map
${\phi}^{n}: {\phi}_{{\pi}_{{{\cal P}}}^{n+1}}^{-1}(M)\rightarrow  {\phi}_{{\pi}_{{\cal P}}^{n}}^{-1}(M)$ as well.
The extended ${\cal P}$-adic Tate module is defined as
$T_{\cal P}[M]=\varprojlim  {\phi}^{-1}_{{\pi}_{\cal P}^{n}}(M).$
Let $K\subset L\subset K^{sep}$  and $M\subset L$ then the absolute Galois group $G_{L}$ acts trivially on $M$ and acts continuously on $T_{\cal P}[M]$ by the following formula:
$
{\sigma}(t_{n})=({\sigma}(t_{n})), \, (t_{n})\in T_{\cal P}[M].
$
One has an exact sequence of $G_{L}$-$A$-modules
\begin{equation}\label{es}
0\rightarrow T_{\cal P}(\phi) \rightarrow T_{\cal P}[M]\rightarrow M\rightarrow 0
\end{equation}
where the surjection in (\ref{es}) is given by the map ${\pr}_{M}: T_{\cal P}[M]\rightarrow M,\quad {\pr}_{M}((t_{n})_{n\geq 0}) =t_{0}.$ 
For any ${\sigma}\in G_{L}$ and $(t_{n})\in T_{\cal P}[M]$ one has ${\sigma}(t_{n})-t_{n}\in {\ker}({\phi}_{{\pi}_{\cal P}^{n}})$ and therefore 
\begin{equation}\label{ksi1}
{\phi}^{n}({\sigma}(t_{n+1})-t_{n+1} ) = {\sigma}({\phi}^{n}(t_{n+1}))- {\phi}^{n}(t_{n+1})={\sigma}(t_{n})-t_{n}.
\end{equation}
Formulas (\ref{ksi1}) show that the map
$$
{\xi}_{L,M}:T_{\cal P}[M]\rightarrow {\text{\rm Map}}(G_{L},T_{\cal P}(\phi))\qquad (t_{n})\rightarrow [{\sigma} \rightarrow ({\sigma}(t_n)-t_{n})_{n\geq 0}]
$$
is well defined.
One can specialize this construction to $M={\Lambda}$ and $K=L.$ For each $1\leq i\leq s$ choose $(t_{i,n})\in {\pr}_{\Lambda}^{-1}(x_{i})$ and  construct the map
\begin{equation}\label{eta}
{\eta}_{\cal P}: G_{K} \rightarrow (T_{\cal P}({\phi}))^{s} \cong {\text{\rm Mat}}_{d\times s}(A_{\cal P})
\end{equation}
by the formula:
${\eta}_{\cal P}(\sigma)= ({\xi}_{K,\Lambda}((t_{1,n})_{n\geq 0})(\sigma),\dots , {\xi}_{K,\Lambda}((t_{s,n})_{n\geq 0})(\sigma)).$
Using  \cite{h11} Lemma 4.3  one obtains the $A$-module homomorphism 
$${\bar\xi}_{K_{\text{\rm ad}},{\Lambda}} :{\Lambda}\rightarrow {\Hom}(G_{K_{\text{\rm ad}}}, T_{\cal P}(\phi)) \qquad (t_{n})_{n\geq 0} \rightarrow[{\sigma}\rightarrow ({\sigma}(t_{n}) - t_{n})_{n\geq 0}].$$
Let ${\psi}_{i}={\bar\xi}_{K_{\text{\rm ad}},{\Lambda}}(x_{i})\in  {\Hom}(G_{K_{\text{\rm ad}}}, T_{\cal P}(\phi))$ for the fixed generators $x_{1},\dots , x_{s}$ of $\Lambda$ and let
\begin{equation}\label{eqq3}
{\Psi}_{\cal P} : G_{K_{\text{\rm ad}}} \rightarrow (T_{\cal P}(\phi))^{s}\cong {\text{\rm Mat}}_{d\times s}(A_{\cal P}), \qquad {\sigma}\rightarrow ({\psi}_1(\sigma),\dots , {\psi}_{s}(\sigma)).
\end{equation}
For $T\subset M_{A}$ there is the homomorphism
$${\Psi}_{T}: G_{K_{\text{\rm ad}}} \rightarrow {\prod}_{{\cal P}\in T}\, {\text{\rm Mat}}_{d\times s}(A_{\cal P}),\qquad {\sigma}\rightarrow ({\Psi}_{\cal P}(\sigma))_{{\cal P}\in T}.$$
Denote ${\Psi}_{\text{\rm ad}}:={\Psi}_{M_{A}}.$
The main results in \cite{h11} are the following:
\begin{prop}[ \cite{h11} Proposition 4.6]\label{prop1}
The image of ${\Psi}_{\cal P}$ is equal to ${\mathrm{ Mat}}_{d\times s}(A_{\cal P})$ for almost all 
${\cal P} \in M_{A}$ and is open for all ${\cal P} \in M_{A}.$
\end{prop}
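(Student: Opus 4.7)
The plan is to follow the Ribet--Bashmakov strategy, originally developed for abelian varieties in \cite{r79} and adapted to Drinfeld modules in \cite{h11} and \cite{p16}. The first step is to observe that the image $H_{\cal P} := {\Psi}_{\cal P}(G_{K_{\text{\rm ad}}})$ is stable under the action of ${\rho}_{\cal P}(G_K)$ on ${\text{\rm Mat}}_{d\times s}(A_{\cal P})$ by left matrix multiplication. Indeed, since $K_{\text{\rm ad}}/K$ is Galois, for ${\tau}\in G_K$ and ${\sigma}\in G_{K_{\text{\rm ad}}}$ the conjugate ${\tau}{\sigma}{\tau}^{-1}$ again lies in $G_{K_{\text{\rm ad}}}$, and a direct computation from (\ref{ksi1}) yields the cocycle identity ${\Psi}_{\cal P}({\tau}{\sigma}{\tau}^{-1}) = {\rho}_{\cal P}(\tau)\,{\Psi}_{\cal P}(\sigma)$. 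Thus $H_{\cal P}$ is a closed ${\rho}_{\cal P}(G_K)$-stable additive subgroup of ${\text{\rm Mat}}_{d\times s}(A_{\cal P})$.

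The second step is to invoke the adelic open image theorem (\ref{openimage}): ${\rho}_{\cal P}(G_K)$ is open in ${\GL}_d(A_{\cal P})$ for every $\cal P$, and for almost all $\cal P$ it surjects onto ${\GL}_d(A/{\cal P})$. For such $\cal P$ the image contains all non-zero scalar matrices mod $\cal P$, which upgrades the reduction $\bar H_{\cal P} \subseteq {\text{\rm Mat}}_{d\times s}(A/{\cal P})$ to an $A/{\cal P}$-linear submodule stable under ${\GL}_d(A/{\cal P})$. Writing $V = (A/{\cal P})^d$, the standard representation is absolutely irreducible with ${\End}_{{\GL}_d}(V) = A/{\cal P}$ (this is where ${\End}_{K^{\text{\rm sep}}}({\phi}) = A$ is essential), so by Schur's lemma every such submodule of $V^{\oplus s}$ has the form $V \otimes_{A/{\cal P}} W_{\cal P}$ for a unique subspace $W_{\cal P}\subseteq (A/{\cal P})^s$. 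Openness of ${\Psi}_{\cal P}$ for every $\cal P$, and full surjectivity for almost all $\cal P$, then follow from a Nakayama-type lifting argument once one knows that $W_{\cal P} = (A/{\cal P})^s$.

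The main obstacle is thus to rule out a proper $W_{\cal P}$ for infinitely many $\cal P$. A proper $W_{\cal P}$ produces non-zero $\bar c_1,\dots,\bar c_s \in A/{\cal P}$ with $\sum_i \bar c_i\,\bar{\psi}_i \equiv 0$ on $G_{K_{\text{\rm ad}}}$. Unpacking the definition $\bar{\psi}_i({\sigma}) = {\sigma}(t_{i,n}) - t_{i,n}$ mod $\cal P$ (with ${\phi}_{{\pi}_{\cal P}^n}(t_{i,n}) = x_i$), this says that the element $\sum_i \tilde c_i\,t_{i,n}$, for any lifts $\tilde c_i \in A$ of $\bar c_i$, is Galois-invariant modulo ${\phi}[{\cal P}]$, which forces a non-trivial $A$-linear combination of the $x_i$ to acquire arbitrarily large ${\cal P}$-power divisibility in the Mordell--Weil group ${\phi}({\cal O}_K)$. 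If this were to occur at infinitely many $\cal P$, a pigeonhole argument on the finitely many possible relations---controlled by Poonen's tameness of ${\phi}({\cal O}_K)$ recalled in Section~2---would contradict the $A$-linear independence of $x_1,\dots,x_s$. This divisibility/tameness dichotomy is the technical heart of the argument, and the whole scheme breaks down without the hypothesis ${\End}_{K^{\text{\rm sep}}}({\phi}) = A$, which both secures absolute irreducibility in step two and rules out spurious relations coming from extra endomorphism structure.
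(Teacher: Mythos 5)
The paper does not actually prove this proposition; it is quoted from \cite{h11} (Proposition 4.6), with \cite{p16} as the published counterpart, so your proposal is being measured against the standard Ribet--Bashmakov argument of those sources. Your first two steps reproduce that argument correctly: the conjugation identity ${\Psi}_{\cal P}(\tau\sigma\tau^{-1})={\rho}_{\cal P}(\tau){\Psi}_{\cal P}(\sigma)$ does make the image a closed ${\rho}_{\cal P}(G_K)$-stable subgroup, and the Pink--R\"utsche open image theorem together with the Schur/isotypic decomposition correctly reduces everything to showing $W_{\cal P}=(A/{\cal P})^s$.

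The gap is in your third step, precisely at what you yourself call the technical heart. Two things are missing or misstated. First, from $\sum_i \bar c_i\bar\psi_i\equiv 0$ on $G_{K_{\mathrm{ad}}}$ you obtain $y=\sum_i{\phi}_{\tilde c_i}(t_{i,1})$ with ${\phi}_{{\pi}_{\cal P}}(y)=\sum_i{\phi}_{\tilde c_i}(x_i)=:z$ and $y$ fixed by $G_{K_{\mathrm{ad}}}$; this gives divisibility of $z$ by ${\pi}_{\cal P}$ in ${\phi}(K_{\mathrm{ad}})$, \emph{not} in ${\phi}({\cal O}_K)$ as you assert. Descending to $K$ requires the cohomological input that the class of $\sigma\mapsto\sigma(y)-y$ in $H^1(\Gal(K_{\mathrm{ad}}/K),{\phi}[{\cal P}])$ is killed by an element of $A$ independent of ${\cal P}$ --- this comes from the homotheties $1+{\pi}_{\cal P}^k u$ in the image of Galois (the same device the paper uses in the proof of Theorem \ref{redthm}) --- and you never invoke it. Second, the endgame cannot be a ``pigeonhole on finitely many relations'': the tuples $(\bar c_1,\dots,\bar c_s)$ live in $(A/{\cal P})^s$ for varying ${\cal P}$, so there is no fixed finite list to pigeonhole over, and a proper $W_{\cal P}$ only yields divisibility by ${\pi}_{\cal P}$ to the first power, not ``arbitrarily large ${\cal P}$-power divisibility.'' The correct mechanism is saturation: let ${\Lambda}'$ be the division hull of ${\Lambda}=\bigoplus_i Ax_i$ in ${\phi}(K)$; Poonen's tameness makes ${\Lambda}'$ finitely generated with ${\Lambda}'/{\Lambda}$ annihilated by a fixed $a_0\in A$, so for the almost all ${\cal P}\nmid a_0$ the containment $z\in{\phi}_{{\pi}_{\cal P}}({\phi}(K))\cap{\Lambda}$ forces $z\in{\phi}_{{\pi}_{\cal P}}({\Lambda})$, hence $\bar c_i=0$ for all $i$ because the $x_i$ are an $A$-basis of ${\Lambda}$ --- the desired contradiction. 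Finally, for the finitely many remaining ${\cal P}$ the assertion is only openness, which needs a separate (easier, but not automatic) argument showing the image contains ${\cal P}^{m}{\mathrm{Mat}}_{d\times s}(A_{\cal P})$; your Nakayama-type lifting only covers the primes where the residual image is already full.
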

\begin{thm}[ \cite{h11} Theorem 4.4]\label{mainthm}
The image of ${\Psi}_{{\text{\rm ad}}}$ is open.
\end{thm}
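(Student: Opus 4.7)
The plan is to bootstrap from the single-prime openness of Proposition~\ref{prop1} to the full adelic statement via a Goursat--Ribet independence argument across primes, followed by a compactness argument to pass from finite sets of primes to all of $M_A$. Let $T_0 \subset M_A$ be the finite set provided by Proposition~\ref{prop1} outside of which $\Psi_{\mathcal{P}}$ is surjective onto $\mathrm{Mat}_{d\times s}(A_{\mathcal{P}})$.

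The first step is to prove by induction on $|T|$ that the image of $\Psi_T$ is open in $\prod_{\mathcal{P}\in T}\mathrm{Mat}_{d\times s}(A_{\mathcal{P}})$ for every finite $T\subset M_A$. The base case $|T|=1$ is Proposition~\ref{prop1}. For the inductive step, passing from $T$ to $T\cup\{\mathcal{P}_0\}$ with $\mathcal{P}_0\notin T$, Goursat's lemma identifies the image of $\Psi_{T\cup\{\mathcal{P}_0\}}$ as a fibre product of the image of $\Psi_T$ with the image of $\Psi_{\mathcal{P}_0}$ over a common finite quotient $Q$. The inductive step reduces to showing that $Q$ is trivial modulo an open subgroup of bounded index, equivalently that the Kummer splitting field $K_{\text{ad}}(\phi^{-1}_{\pi_{\mathcal{P}_0}^n}(\Lambda))$ and the analogous compositum over the primes of $T$ are linearly disjoint over $K_{\text{ad}}$ up to a bounded obstruction.

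This independence of Kummer extensions at different primes over $K_{\text{ad}}$ is the main obstacle. I would establish it by the Ribet--Bashmakov commutator technique \cite{r79} in its Drinfeld-module form from \cite{h11}, \cite{p16}: using the open image of the adelic torsion representation~\eqref{openimage} (where the hypothesis $\End_{K^{\mathrm{sep}}}(\phi)=A$ is essential) together with the exact sequence~\eqref{es}, one produces elements of $G_K$ whose commutators with pre-chosen elements of $\ker\Psi_T$ span an open subgroup of $\mathrm{Mat}_{d\times s}(A_{\mathcal{P}_0})$ inside the image. The underlying mechanism is that the residual representations $\bar\rho_{\mathcal{P}}$ at distinct $\mathcal{P}$ have essentially independent images in the adelic Galois group, so after restricting to $G_{K_{\text{ad}}}$ the Kummer cocycles at $\mathcal{P}_0$ and at primes of $T$ can be perturbed independently.

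Finally, once the openness of $\Psi_T$ is established for every finite $T$ with a uniform open subgroup contained in the $T_0$-component, I would combine this with the surjectivity of $\Psi_{\mathcal{P}}$ for $\mathcal{P}\notin T_0$ and with compactness of $G_{K_{\text{ad}}}$: for any prescribed element of $V=\prod_{\mathcal{P}\in T_0}V_{\mathcal{P}}\times\prod_{\mathcal{P}\notin T_0}\mathrm{Mat}_{d\times s}(A_{\mathcal{P}})$, one builds along an exhaustion $T_0\subset T_1\subset T_2\subset\cdots$ of $M_A$ an inverse-compatible system of preimages in the finite-$T$ quotients; a limit point in the profinite group $G_{K_{\text{ad}}}$ then maps to the prescribed element under $\Psi_{\text{ad}}$, showing that the image contains $V$ and completing the proof of openness.
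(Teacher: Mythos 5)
You should first note that the paper itself offers no proof of this statement: Theorem~\ref{mainthm} is imported verbatim from \cite{h11} (Theorem~4.4), so there is nothing internal to compare your argument against; I am therefore judging your proposal against the argument in \cite{h11}/\cite{p16}. Your skeleton is the right one and matches that source in spirit: single-prime openness (Proposition~\ref{prop1}), an independence statement across primes, and a closedness/compactness argument to pass from finite $T$ to all of $M_A$ (the image of the compact group $G_{K_{\mathrm{ad}}}$ is closed, and a closed subgroup whose projection to every finite sub-product contains a \emph{fixed} open subgroup is open — your filtered-intersection argument for this last step is correct).

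The gap is that the entire content of the theorem sits in the step you describe only impressionistically. Unlike the classical setting over $\mathbb{Z}$, where the targets at distinct primes $\ell$ are pro-$\ell$ for different $\ell$ and independence is automatic, here every $\mathrm{Mat}_{d\times s}(A_{\mathcal P})$ is an abelian pro-$p$ group for the \emph{same} $p$, so the Goursat quotient $Q$ in your inductive step has no a priori reason to be finite, let alone of bounded order; and without a bound on $[\,\prod_{{\mathcal P}\in T}\mathrm{Mat}_{d\times s}(A_{\mathcal P}) : \mathrm{im}\,\Psi_T\,]$ that is uniform in $T$, your final compactness step collapses. Saying that the commutator technique ``produces elements whose commutators span an open subgroup'' names the tool but does not close this. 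The mechanism that actually works is the following: the image $H=\mathrm{im}\,\Psi_{\mathrm{ad}}$ is stable under the conjugation action of $\rho_{\mathrm{ad}}(G_K)$ on $\prod_{\mathcal P}T_{\mathcal P}(\phi)^s$ (the cocycle relation $\Psi(\sigma\tau\sigma^{-1})=\rho(\sigma)\Psi(\tau)$ for $\tau\in G_{K_{\mathrm{ad}}}$, $\sigma\in G_K$); since $\rho_{\mathrm{ad}}(G_K)$ is open in $\mathrm{GL}_d(\hat A)$ by \eqref{openimage}, it contains the full factor $\mathrm{GL}_d(A_{\mathcal P})$ for all $\mathcal P$ outside a finite set and an open subgroup at the remaining ones, and applying $g-1$ for suitable $g$ supported at a single prime projects $H$ onto a controlled open piece of that single factor, with no loss for almost all $\mathcal P$. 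This is what delivers both the finiteness of $Q$ and the required uniformity, and it is the part of the proof your proposal must actually carry out rather than invoke.
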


In the sequel for  prime ideals ${\cal P}\in M_{A}$ we shall use the following modules:
$$V_{{\cal P}}(\phi)=T_{{\cal P}}({\phi}){\otimes}_{{A}_{\cal P}}F_{\cal P}.$$
\begin{rem}\label{dim}
Notice that in view of (\ref{eqq3}) we have $T_{\cal P}(\phi)\cong {{\text{\rm Mat}}}_{d\times 1}(A_{\cal P})$ and ${\dim}_{F_{\cal P}}V_{\cal P}(\phi)=d.$
\end{rem}

\section{Proof of the reduction theorem}
In what follows we assume that the fields of definitions for the Drinfeld modules involved  are  finite extension of $F={\mathbb F}_{q}(t).$

Let ${\phi}:A \rightarrow {\cal O}_K\{\tau \}$ be a Drinfeld module of generic characteristic defined over $O_K$.    For a maximal ideal ${{\cal P}}\subset {\mathcal O}_K$ we can reduce coefficients of ${\phi}_{a},$ for every $a\in A,$ modulo ${{\cal P}}$ and obtain a Drinfeld module over a finite field ${\mathcal O}_K/{{\cal P}}.$
This Drinfeld module has the special characteristic ${\cal P}.$ We will denote it by ${\phi}^{{\mathcal P}}.$
\begin{dfn}\label{badp}
Let ${\phi}$ be a fixed Drinfeld module of rank $d$ defined over $K.$ A prime ${\cal P}\in {\Spec} {\cal O}_K$ is called {\em good} if
\begin{enumerate}
\item[(1)] there exists ${\alpha}\in K^{\times}$ such that ${\phi}_{1}={\alpha}{\phi}{\alpha}^{-1}$ has ${\cal P}$-integral coefficients.
\item[(2)] the reduced map ${{\phi}_{1}^{\cal P}} : A \rightarrow {\cal O}_{K}/{\cal P}\{\tau \}$ is a Drinfeld module of rank $d$.
\end{enumerate}
Primes that are not good are called {\em bad}.
\end{dfn}

It is well known (because $A$ is a finitely generated ring over ${\mathbb F}_{q})$ that almost all primes ${\cal P}\in {\Spec}{\cal O}_{K}$ are good. Moreover, for almost all primes one can take ${\alpha}=1$  (cf. \cite{g94}, p.320, \cite{th04}).
There also exists  an analogue of classical {\em N{\'e}ron - Ogg - ${\check{S}}$afarevi${\check{c}}$} criterion for abelian varieties \cite{ta82}, \cite{g94}.

\begin{dfn}\label{red}
By reduction map mod ${{\cal P}}$  for Mordell-Weil groups we will mean the $A$-module homomorphism 
$$
{\mathrm{ red}}_{{\mathcal P}}: {\phi}({\mathcal O}_K) \rightarrow  {\phi}^{{\mathcal P}}({\mathcal O}_K/{{\cal P}}).
$$
\end{dfn}

Let $L$ be a finite extension of $K.$

The following Proposition holds true  ( \cite{bko95} Proposition 1.3):
\begin{prop}\label{injection}
Let ${\phi}$ be a Drinfeld module over ${\cal O}_{L}$ of rank $d,$  ${\cal P}$ be a maximal ideal of ${\cal O}_{L}$ and let ${\frak p}=Ker( A\rightarrow {\cal O}_{L} \rightarrow {\cal O}_{L}/{\cal P})$ be the special characteristic. Let $I$ be prime to ${\frak p}.$ Then
\begin{enumerate}
\item[(1)]  $Tor_{\phi}({\cal P})=\{ x\in\cal P: {\phi}_{a}(x)=0 $ for some $ a\in A \}$ has no nontrivial $I$-torsion.
\item [(2)] the reduction map is an injection on the $I$-torsion  between ${\phi}({\cal O}_{L})[I]$ and  ${\phi}^{\cal P}(O_{L}/{\cal P})[I].$
\end{enumerate}
\end{prop}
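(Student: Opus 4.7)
The plan is to reduce part (2) to part (1) through an identification of the kernel of reduction, and to prove part (1) by a direct Newton-polygon style argument on the Drinfeld polynomials.

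For part (1), I would take $x \in \mathcal{P}$ with $\phi_I(x) = 0$ and work with the normalized $\mathcal{P}$-adic valuation $v$ on $\mathcal{O}_L$. Because $I + \mathfrak{p} = A$ by the coprimality hypothesis, one can choose $a \in I$ with $\iota(a) \equiv 1 \pmod{\mathcal{P}}$, so the constant term of $\phi_a = \iota(a)\tau^0 + c_1 \tau + \cdots + c_r \tau^r$ (with $c_i \in \mathcal{O}_L$ by the integrality hypothesis on $\phi$) is a $\mathcal{P}$-unit. For $v(x) \geq 1$, the linear term $\iota(a)x$ has valuation exactly $v(x)$, while every higher term $c_i x^{q^i}$ with $i \geq 1$ has valuation at least $q^i v(x) > v(x)$. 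By the ultrametric inequality, $v(\phi_a(x)) = v(x)$; since $\phi_a(x) = 0$ forces this valuation to be infinite, we conclude $x = 0$.

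For part (2), the kernel of $\operatorname{red}_{\mathcal{P}} : \phi(\mathcal{O}_L) \to \phi^{\mathcal{P}}(\mathcal{O}_L/\mathcal{P})$ is, as an underlying set, just $\mathcal{P}$, but now carrying the $\phi$-module structure inherited from $\phi(\mathcal{O}_L)$. Intersecting this kernel with $\phi(\mathcal{O}_L)[I]$ yields exactly the $I$-torsion inside $\operatorname{Tor}_{\phi}(\mathcal{P})$, which is zero by part (1). Hence $\operatorname{red}_{\mathcal{P}}$ is injective on $\phi(\mathcal{O}_L)[I]$.

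There is no serious obstacle here. The single step demanding care is the invocation of $I + \mathfrak{p} = A$ to secure an $a \in I$ with $\iota(a)$ a unit modulo $\mathcal{P}$; this is precisely where the hypothesis that $I$ is prime to the special characteristic enters. Without it, the leading coefficient of $\phi_a$ could itself lie in $\mathcal{P}$, and the dominance of the linear term in the valuation computation above would break down, so both assertions would genuinely fail (one sees this already for the Frobenius-like factor that appears when $a \in \mathfrak{p}$).
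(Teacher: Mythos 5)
Your argument is correct: the coprimality $I+\mathfrak{p}=A$ yields $a\in I$ with $\iota(a)$ a $\mathcal{P}$-unit, the ultrametric computation $v(\phi_a(x))=v(x)$ for $v(x)\ge 1$ gives (1), and (2) follows since the kernel of reduction restricted to $\phi(\mathcal{O}_L)[I]$ sits inside the $I$-torsion of $\mathrm{Tor}_\phi(\mathcal{P})$. This is essentially the same valuation argument as in the reference (Bae--Koo, Proposition 1.3) that the paper cites without reproducing; the only quibble is that in your closing remark the coefficient that could fall into $\mathcal{P}$ when $a\in\mathfrak{p}$ is the constant coefficient $\iota(a)$ of $\phi_a$, not its leading coefficient.
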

\begin{rem}
In \cite{bko95} the authors consider Drinfeld modules of rank 2 but their proof works for arbitrary rank.
\end{rem}

\begin{thm}\label{redthm}
Let $A={\mathbb F}_q[t]$, ${\phi}_i$ for $1\leq i \leq t$ be  Drinfeld modules of generic characteristic defined over $K$ such that ${\End}_{K^{\mathrm{sep}}}({\phi}_{i})=A$, ${\cal P}\in M_{A}$ be a maximal ideal and ${\pi}_P$ its generator.
Let $x_{i,j}\in {\phi}_{i}({\cal O}_K)$ for some $s_{i}$ and $1\leq j \leq s_{i}$ be linearly independent elements over $A$ for each $1\leq i\leq t.$ There is a  set $W$ of prime ideals $\cal W$ of ${\cal O}_{K}$ of positive density such that  ${\mathrm{red}}_{\cal W}(x_{i,j})=0$ in $ {\phi}_{i}^{{\mathcal W}}({\mathcal O}_K/{{\cal W}})_{{\pi}_{\cal P}}$ for all $1\leq j\leq s_{i}$ and  $1\leq i\leq t.$
\end{thm}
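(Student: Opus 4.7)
The plan is to invoke the Kummer-theoretic machinery of Section~3 simultaneously for all the $\phi_i$ and then extract the primes via Chebotarev density, following the Ribet--Bashmakov strategy adapted to Drinfeld modules in \cite{h11} and \cite{p16}. For each $1\le i\le t$, I apply the construction of Section~3 to $\phi_i$ with the $A$-submodule $\Lambda_i\subset\phi_i(\mathcal{O}_K)$ generated by the $A$-linearly independent elements $x_{i,1},\dots,x_{i,s_i}$. This produces the adelic Kummer representation
\begin{equation*}
\Phi^{(i)}_{\mathcal{P}^{\infty}}\colon G_K \longrightarrow \mathrm{Mat}_{d_i\times s_i}(A_{\mathcal{P}})\rtimes\mathrm{GL}_{d_i}(A_{\mathcal{P}}),
\end{equation*}
whose image is open: the $\mathrm{GL}$-component has open image by the Pink--R\"utsche open-image theorem (\ref{openimage}) applied to $\rho^{(i)}_{\mathcal{P}}$, and the $\mathrm{Mat}$-component has open image (relative to the first) by Proposition \ref{prop1}. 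Assembling these yields the joint representation $\Phi\colon G_K\to\prod_i(\mathrm{Mat}_{d_i\times s_i}(A_{\mathcal{P}})\rtimes\mathrm{GL}_{d_i}(A_{\mathcal{P}}))$; a Goursat--Ribet argument, using that the $\phi_i$ are pairwise non-isogenous with $\End_{K^{\mathrm{sep}}}(\phi_i)=A$ (so that the residual representations on $\phi_i[\mathcal{P}]$ are pairwise non-isomorphic for almost all $\mathcal{P}$), gives that the image of $\Phi$ is again open in the product.

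Now fix an integer $m\ge 1$. Exploiting openness of the joint image, I choose $\sigma\in G_K$ whose image under $\Phi$ satisfies, for every $i$ and $j$, the three conditions: $\bar\rho^{(i)}_{\mathcal{P}^m}(\sigma)$ is the identity on $\phi_i[\mathcal{P}^m]$; $\eta^{(i)}_{\mathcal{P}^m}(\sigma)=0$ in $\mathrm{Mat}_{d_i\times s_i}(A/\mathcal{P}^m)$; and the Galois component is \emph{not} trivial on $\phi_i[\mathcal{P}^{m+1}]$ (a non-empty open condition within the open subgroup already fixed). By Chebotarev density applied to the finite Galois extension of $K$ cut out by $\Phi_{\mathcal{P}^{m+1}}$ (and after discarding the finite set of primes of bad reduction for some $\phi_i$ and those dividing $\mathcal{P}$), we obtain a set $W$ of primes $\mathcal{W}$ of $\mathcal{O}_K$ of positive density whose Frobenius class coincides with that of $\sigma$ modulo $\mathcal{P}^{m+1}$.

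For $\mathcal{W}\in W$ we unwind the two pieces of information. The vanishing of $\eta^{(i)}_{\mathcal{P}^m}(\mathrm{Frob}_{\mathcal{W}})$ says that each chosen preimage $y^{(m)}_{i,j}\in K^{\mathrm{sep}}$ of $x_{i,j}$ under $\phi_{i,\pi_{\mathcal{P}}^m}$ is fixed by some decomposition group at $\mathcal{W}$; reducing it modulo a place above $\mathcal{W}$ shows $\mathrm{red}_{\mathcal{W}}(x_{i,j})\in \pi_{\mathcal{P}}^{m}\cdot\phi_i^{\mathcal{W}}(k_{\mathcal{W}})$. On the other hand, the triviality of $\mathrm{Frob}_{\mathcal{W}}$ on $\phi_i[\mathcal{P}^m]$ together with its nontriviality on $\phi_i[\mathcal{P}^{m+1}]$, combined with Proposition \ref{injection} (injectivity of reduction on prime-to-residue-characteristic torsion), identifies $\phi_i^{\mathcal{W}}(k_{\mathcal{W}})[\mathcal{P}^{\infty}]$ with $\phi_i[\mathcal{P}^m]$, an $A$-module of exponent exactly $\pi_{\mathcal{P}}^m$. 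Consequently the $\pi_{\mathcal{P}}$-primary component of $\mathrm{red}_{\mathcal{W}}(x_{i,j})$ is at once $\pi_{\mathcal{P}}^m$-divisible and killed by $\pi_{\mathcal{P}}^m$, hence zero, which is exactly the conclusion sought.

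The hard part will be Step~1--2, namely leveraging the openness statements of \cite{pr109} and \cite{h11} into openness of the \emph{joint} image $\Phi(G_K)$: one must rule out non-trivial Galois quotients common to the towers cut out by distinct $\Phi^{(i)}$. This is where the hypothesis $\End_{K^{\mathrm{sep}}}(\phi_i)=A$ is crucially used in the Goursat-type analysis to prevent hidden isomorphisms between semi-simple constituents of the $\bar\rho^{(i)}_{\mathcal{P}}$ for distinct $i$, playing the role that absolute simplicity and disjointness play in the abelian-variety analogue of \cite{bk11}.
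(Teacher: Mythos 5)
Your overall strategy --- combine the Kummer-theoretic openness results with Chebotarev, then play $\pi_{\cal P}^m$-divisibility of $\mathrm{red}_{\cal W}(x_{i,j})$ against a bound on the exponent of the ${\cal P}$-primary torsion of $\phi_i^{\cal W}(k_{\cal W})$ --- is the right one and is in essence the route the paper takes. But the final step has a genuine gap. You require only that $\mathrm{Frob}_{\cal W}$ be trivial on $\phi_i[{\cal P}^m]$ and \emph{nontrivial} on $\phi_i[{\cal P}^{m+1}]$, and from this you conclude that $\phi_i^{\cal W}(k_{\cal W})[{\cal P}^\infty]\cong\phi_i[{\cal P}^m]$ has exponent exactly $\pi_{\cal P}^m$. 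That implication fails. First, already on $\phi_i[{\cal P}^{m+1}]$ the fixed submodule of an element $1+\pi_{\cal P}^m N$ with $N\not\equiv 0\ (\mathrm{mod}\ \pi_{\cal P})$ can be strictly larger than $\phi_i[{\cal P}^m]$: take $N$ singular mod $\pi_{\cal P}$, e.g.\ $\mathrm{Frob}=\mathrm{diag}(1,\,1+\pi_{\cal P}^m)$ for $d_i=2$, which is trivial on $\phi_i[{\cal P}^m]$, nontrivial on $\phi_i[{\cal P}^{m+1}]$, yet fixes elements of order $\pi_{\cal P}^{n}$ inside $\phi_i[{\cal P}^n]$ for every $n$. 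Second, and more fundamentally, $\phi_i^{\cal W}(k_{\cal W})[{\cal P}^\infty]=\ker\bigl(\mathrm{Frob}_{\cal W}-1\bigr)$ computed on all of $\phi_i[{\cal P}^\infty]$, while Chebotarev only lets you prescribe $\mathrm{Frob}_{\cal W}$ in the finite quotient cut out at level ${\cal P}^{m+1}$; its action on $\phi_i[{\cal P}^n]$ for $n>m+1$ is not controlled by your conditions, so no bound on the exponent of the fixed module follows.

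The repair is precisely where the paper invokes the Pink--R\"utsche open image theorem: the torsion component of $\sigma$ must be a \emph{homothety} $1+\pi_{\cal P}^m u$ with $u\in{\mathbb F}_q^{*}$ (such scalars lie in the image for $m\gg 0$ by (\ref{openimage})), while the Kummer component is trivial at level $m$; the compatibility of these two requirements is the content of the intersection statement (\ref{intersection}) in the paper, which plays the role of your openness-of-the-joint-image step. Then any Frobenius congruent to $\sigma$ modulo ${\cal P}^{m+1}$ satisfies $\mathrm{Frob}_{\cal W}-1=\pi_{\cal P}^m(uI+\pi_{\cal P}B)$ on $T_{\cal P}(\phi_i)$ with $uI+\pi_{\cal P}B$ invertible, so its fixed submodule of $\phi_i[{\cal P}^\infty]$ is exactly $\phi_i[{\cal P}^m]$ and your divisibility-versus-exponent argument goes through; the paper packages the same point as an order-counting contradiction on $\pi_{\cal P}^{c_{i,j}-1}\mathrm{red}_{{\cal W}_1}(y_{i,j})$. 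A smaller remark: your Goursat step uses that the $\phi_i$ are pairwise non-isogenous, which is not among the stated hypotheses of Theorem \ref{redthm}, though it does hold in the application to Theorem \ref{localglobal}.
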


\begin{proof}
By Proposition \ref{prop1} there exists $m\in {\mathbb N}$ such that for any $\cal P$ one has
$$
{\cal P}^{m}{\prod}_{i=1}^{t} T_{\mathcal P}({\phi}_{i})^{s_{i}} \subset {\Psi}_{\cal P} (G_{K_{\mathrm{ad}}}) \subset {\prod}_{i=1}^{t}T_{\mathcal P}({\phi}_{i})^{s_{i}}.
$$
Let ${\Gamma}={\sum}_{i=1}^{t}{\sum}_{j=1}^{s_{i}}Ax_{i,j},$ denote  $K_{{\cal P}^{\infty}}:= K({\widehat\varphi}[{\cal P}^{\infty}]),$  where ${\widehat\varphi}[{\cal P}^{\infty}] = {\bigcup}_{i=1}^{t} {\phi}_{i}[{\cal P}^{\infty}]$ and let  
$$\frac{1}{{\pi}^{\infty}_{\cal P}}{\Gamma}:=\{x\in K^{\text{\rm sep}}\, | \,\,  \, \exists \,m \,\, {\text{\rm such that}}\,\, {\pi}^m_{\cal P}\,x\in {\Gamma}\, \}.$$
Similarly, define $K_{{\cal P}^k}:=K({\widehat\varphi}[{\cal P}^k])$ and 
$$\frac{1}{{\pi}^{k}_{\cal P}}{\Gamma}:=\{x\in K^{\text{\rm sep}}\, | \, \,\, {\text{\rm such that}}\,\, {\pi}^{k}_{\cal P}\,x\in {\Gamma}\, \}.$$
Let $H_{{\cal P}^{k}}=G(K^{\mathrm{sep}}/K_{{\cal P}^{k}})$ and $H_{{\cal P}^{\infty}}=G(K^{\mathrm{sep}}/K_{{\cal P}^{\infty}}).$
The Kummer map (\ref{eta})  yields the maps
\begin{equation}\label{psiij}
{\psi}_{i,j}^{(k)} : H_{{\cal P}^{k}} \rightarrow {\phi}_i[{\cal P}^{k}], \quad {\psi}_{i,j}^{(k)}(\sigma):= {\sigma}(\frac{1}{{\pi}_{\cal P}^k}x_{i,j})- \frac{1}{{{\pi}_{\cal P}}^k}x_{i,j}
\end{equation} 
Let ${\psi}_{i,j}$ be the inverse limit of $ {\psi}_{i,j}^{(k)}.$ 
 In the following commutative diagram (diagram (\ref{diag1}))  $\overline{{\Psi}^k_{\cal P}}=  \bigoplus_{i=1}^t \bigoplus_{j=1}^{s_{i}}(\overline{\psi}_{i ,j}^{(k)}) $  and $\overline{{\Psi}_{\cal P}}=  \bigoplus_{i=1}^t \bigoplus_{j=1}^{s_{i}}(\overline{\psi}_{i ,j}),$  where 
$\overline{\psi}_{i ,j}^{(k)}$ and $\overline{\psi}_{i ,j}$ are the maps induced by ${\psi}_{i ,j}^{(k)}$ and ${\psi}_{i ,j}.$ 
Notice that by  construction the maps $\overline{\psi}_{i ,j}^{(k)}$ and ${\overline{{\Psi}_{\cal P}^k}}$ are injective for all $k\geq 1.$
\begin{equation}\label{diag1}
 \xymatrix{
G(K_{{\cal P}^{\infty}}(\frac{1}{{\pi}^{\infty}_{\cal P}} \Gamma)/K_{{\cal P}^{\infty}}) \ar[d] \ar[r]^-{\overline{{\Psi}_{\cal P}}} &  \bigoplus_{i=1}^t T_{{\cal P}} ({\phi}_i)^{s_i}/{\cal P}^{m}
\bigoplus_{i=1}^t T_{{\cal P}} ({\phi}_i)^{s_i} \ar[d]^{\cong} \\
G(K_{{\cal P}^{k+1}}(\frac{1}{{\pi}^{k+1}_{\cal P}} \Gamma)/K_{{\cal P}^{k+1}})
 \ar[d] \ar[r]^-{\overline{\Psi^{k+1}_{\cal P}}} &  \bigoplus_{i=1}^t ({\phi}_i [{\cal P}^{k+1}])^{s_i}/
{\cal P}^{m} \bigoplus_{i=1}^t ({\phi}_i [{\cal P}^{k+1}])^{s_i} \ar[d]^{\cong}\\
G(K_{{\cal P}^{k}}(\frac{1}{{\pi}^{k}_{\cal P}} \Gamma)/K_{{\cal P}^{k}})
 \ar[r]^-{\overline{{\Psi}_{\cal P}^k}} &  \bigoplus_{i=1}^t ({\phi}_i [{\cal P}^{k}])^{s_i} /
{\cal P}^{m} \bigoplus_{i=1}^t ({\phi}_i [{\cal P}^{k}])^{s_i}
}
\end{equation}
Since the maps ${\overline{{\Psi}_{\cal P}^k}}$ and ${\overline{{\Psi}_{\cal P}^{k+1}}}$ are injective and the right bottom vertical arrow is an isomorphism we see that the left bottom arrow is an injection. 
This shows  the following equality for orders of the Galois groups $|G(K_{{\cal P}^{k+1}}(\frac{1}{{\pi}^{k+1}_{\cal P}} \Gamma)/K_{{\cal P}^{k+1}})|\leq |G(K_{{\cal P}^{k}}(\frac{1}{{\pi}^{k}_{\cal P}} \Gamma)/K_{{\cal P}^{k}})|.$
As these Galois groups are finite the equality is achieved for sufficiently large $k.$ Therefore the left bottom vertical arrow is also  a surjection and 
 the images of ${\overline{{\Psi}_{\cal P}^{k+1}}}$ and 
${\overline{{\Psi}_{\cal P}^k}}$ in the diagram (\ref{diag1}) are isomorphic for large enough $k.$ 
Let us consider the following diagram of fields
\begin{equation}\label{diag2}
\diagram
&&&K_{{\cal P}^{k+1}}(\frac{1}{{\pi}_{\cal P}^{k+1}}{\Gamma})\dline&\\
&&&K_{{\cal P}^{k+1}}(\frac{1}{{\pi}_{\cal P}^{k}}{\Gamma})&\\
&& K_{{\cal P}^{k}}(\frac{1}{{\pi}_{\cal P}^{k}}{\Gamma}) \urline&& K_{{\cal P}^{k+1}} \ulline \\
&&& K_{{\cal P}^{k}}\ulline\urline&&\\
\enddiagram
\end{equation}
The above mentioned surjection of Galois groups yields the following equality for large enough values of $k.$ 
\begin{equation}\label{intersection}
K_{{\cal P}^{k}}(\frac{1}{{\pi}_{\cal P}^{k}}{\Gamma}) \cap K_{{\cal P}^{k+1}}=K_{{\cal P}^{k}}.
\end{equation}
Let ${\tilde h}\in G(K_{{\cal P}^{\infty}}/K_{{\cal P}^k})$ be the homothety $1+{\pi}_{\cal P}^{k}u, u\in{\mathbb F}_{q}^{*},$ acting on $T_{\cal P}(\widehat{\phi})$. For $k\gg 0$ such a homothety exists according to the open image theorem 
of Pink and R{\"u}tsche \cite{pr09} (cf. ( \ref{openimage}))  and the fact that $K_{{\cal P}^k}$ is a finite extension of $K.$ Let $h\in  G(K_{{\cal P}^{k+1}}/K_{{\cal P}^k})$ be a projection of ${\tilde h}.$
By (\ref{intersection}) there exists ${\sigma}\in G(K_{{\cal P}^{k+1}}(\frac{1}{{\pi}_{\cal P}^k}\Gamma)/K_{{\cal P}^k})$ such that ${\sigma}|_{K_{{\cal P}^{k}}(\frac{1}{{\pi}_{\cal P}^k}\Gamma)}=\id$ and 
${\sigma}|_{K_{{\cal P}^{k+1}}}=h.$
By the Chebotarev density theorem for global fields (cf. \cite{fj08}, Theorem 6.3.1) there is a  set of primes ${\cal W} \in {\cal O}_K$ of positive density such that there exists a prime ${\cal W}_{1} \in {\cal O}_{K_{{\cal P}^{k+1}}(\frac{1}{{\pi}_{\cal P}^{k+1}}{\Gamma})}$ with the Frobenius in $G(K_{{\cal P}^{k+1}}(\frac{1}{{\pi}_{\cal P}^{k+1}}{\Gamma}))$ equal to ${\sigma}.$ Assume that ${\cal W}\nmid {\cal P}.$
Let ${\cal W}$ and ${\cal W}_{1}$ be such primes and let ${\cal W}_{2}\in {\cal O}_{K_{{\cal P}^{k}}(\frac{1}{{\pi}_{\cal P}^{k}}{\Gamma})}$ be the prime below ${\cal W}_{1}.$
Consider the following commutative diagram:

\begin{align}\label{diag3}
\xymatrix{
{\phi}_i ({\cal O}_{K})\ar[d]  \ar[r]^-{\,\,\quad{\text{\rm red}}_{\cal W}} &  {\phi}_{i}^{{\cal W}} (k_{{\cal W}})_{{\pi}_{\cal P}} \ar[d]^{=}\\
{\phi}_{i}({\cal O}_{K_{{\cal P}^k}(\frac{1}{{\pi}_{\cal P}^{k}}{\Gamma})})
\ar[d] \ar[r]^{\,\,\quad{\text{\rm red}}_{{\cal W}_2}} & {\phi}_{i}^{{\cal W}_2} (k_{{\cal W}_2})_{{\pi}_{\cal P}} \ar[d]\\ 
{\phi}_i ({\cal O}_{K_{{\cal P}^{k{+}1}}(\frac{1}{{\pi}_{\cal P}^{k}}{\Gamma})})
\ar[r]^{\,\,\quad {\text{\rm red}}_{{\cal W}_1}} &  {\phi}_{i}^{{\cal W}_1} (k_{{\cal W}_1})_{{\pi}_{\cal P}}
}
\end{align}
where for brevity we denoted $k_{\cal W}:={\cal O}_{K}/{\cal W}$ and similarly for $k_{{\cal W}_{1}}$ and $k_{{\cal W}_{2}}.$ The subscript ${\pi}_{\cal P}$ for the  Drinfeld modules with finite coefficients denotes the 
${\pi}_{\cal P}$-torsion e.g. $ {\phi}_{i}^{{\cal W}} (k_{{\cal W}})_{{\pi}_{\cal P}} = \{ {\alpha}\in {\phi}_{i}^{{\cal W}} (k_{{\cal W}}) |\quad \exists  \, k\in {\mathbb N}\,\, {\text{\rm such that}} \,\ {\pi}_{\cal P}^k{\alpha}=0  \,\}.     $ 
In the diagram \ref{diag3}  reduction maps are by slight abuse of notation the compositions  of the reduction maps with the projections on the ${\pi}_{\cal P}$-torsion part (cf. Remark \ref{rma} ).
\begin{dfn}\label{piorder}
By the ${\pi}_{\cal P}$ -order of a point $x\in {\phi}_{i}^{{\cal W}} (k_{{\cal W}})_{{\pi}_{\cal P}}$ we mean the  least  positive integer $m$ such that ${\pi}_{\cal P}^mx=0.$
\end{dfn}
\begin{rem}\label{remarkw}
Notice that by our choice of ${\cal W}, {\cal W}_{1}$ and ${\cal W}_2$  the ${\cal W}_{1}$-Frobenius  element ${\sigma}$ restricts to the identity on $K_{{\cal P}^{k}}(\frac{1}{{\pi}^{k}}{\Gamma}).$   
Therefore we have $k_{{\cal W}_2}=k_{\cal W}$ and ${\phi}_{i}^{{\cal W}_2} (k_{{\cal W}_2})_{{\pi}_{\cal P}}={\phi}_{i}^{{\cal W}} (k_{{\cal W}})_{{\pi}_{\cal P}}.$
\end{rem}

Let $c_{i,j}$ be the ${\pi}_{\cal P}$-order of ${\text{\rm red}}_{\cal W}(x_{i,j})\in {\phi}_{i}^{{\cal W}} (k_{{\cal W}})_{{\pi}_{\cal P}}.$ All the vertical arrows in the diagram (\ref{diag3}) are injections.
Let $y_{i,j}=\frac{1}{{\pi}_{\cal P}^k}x_{i,j} \in {\phi}_{i}({\cal O}_{K_{{\cal P}^k}(\frac{1}{{\pi}_{\cal P}^{k}}{\Gamma})}) \subset {\phi}_i ({\cal O}_{K_{{\cal P}^{k{+}1}}(\frac{1}{{\pi}_{\cal P}^{k}}{\Gamma})}) .$ 
One readily verifies that the ${\pi}_{\cal P}$-order of ${\text{\rm red}}_{{\cal W}_1} (y_{i,j})$ equals $k+c_{i,j}.$ By Remark \ref{remarkw} the element  ${\text{\rm red}}_{{\cal W}_1}(y_{i,j})$ comes from an element 
of $ {\phi}_{i}^{{\cal W}} (k_{{\cal W}})_{{\pi}_{\cal P}} .$ Assume $c_{i,j}\ge 1.$ We have
\begin{equation}\label{ord1} 
{\sigma}({\pi}_{\cal P}^{c_{i,j}-1}{\text{\rm red}}_{{\cal W}_{1}}(y_{i,j}))=(1+{\pi}_{\cal P}^ku){\pi}_{\cal P}^{c_{i,j}-1}{\text{\rm red}}_{{\cal W}_{1}}(y_{i,j}).
\end{equation}
This is because the ${\pi}_{\cal P}$-order of ${\pi}_{\cal P}^{c_{i,j}-1}{\text{\rm red}}_{{\cal W}_{1}}(y_{i,j})$ in $ {\phi}_{i}^{{\cal W}} (k_{{\cal W}})_{{\pi}_{\cal P}} $ is equal to $k+1.$ 
Notice that  we have choosen ${\cal W}$ such that   ${\cal W}\nmid {\cal P}$ and since the reduction map is injective on a torsion prime to ${\cal W}$  (cf. Proposition \ref{injection}) we see, by definition of the field $K_{{\cal P}^{k+1}},$ that there exists a torsion element of the left bottom Mordell-Weil group which maps onto ${\pi}_{\cal P}^{c_{i,j}-1}{\text{\rm red}}_{{\cal W}_{1}}(y_{i,j}).$ Since on this torsion element ${\sigma}$ acts via the homothety $h$ we have (\ref{ord1}).

On the other hand by our choice of ${\cal W}$ the Frobenius at ${\cal W}_1$ acts on ${\pi}_{\cal P}^{c_{i,j}-1}{\text{\rm red}}_{{\cal W}_{1}}(y_{i,j})$ via $\sigma$. Thus since ${\text{\rm red}}_{{\cal W}_{1}}(y_{i,j})\in  {\phi}_{i}^{{\cal W}_2} (k_{{\cal W}_2})_{{\pi}_{\cal P}}$   we have
\begin{equation}\label{ord2}
{\sigma}({\pi}_{\cal P}^{c_{i,j}-1}{\text{\rm red}}_{{\cal W}_{1}}(y_{i,j}))={\pi}_{\cal P}^{c_{i,j}-1}{\text{\rm red}}_{{\cal W}_{1}}(y_{i,j}).
\end{equation}
Comparing (\ref{ord1}) and (\ref{ord2})  we obtain
$${\pi}_{\cal P}^{c_{i,j}-1+k}u\,{\text{\rm red}}_{{\cal W}_{1}}(y_{i,j})={\pi}_{\cal P}^{c_{i,j}-1}u\,{\text{\rm red}}_{{\cal W}_{1}}(x_{i,j})=0.$$
But this contradicts the assumption that the ${\pi}_{\cal P}$-order of  ${\text{\rm red}}_{\cal W}(x_{i,j})$ equals $c_{i,j}.$
\end{proof}

We also have the following

\begin{thm}\label{redthm11}
Let $A={\mathbb F}_q[t]$, ${\phi}_i$ for $1\leq i \leq t$ be  Drinfeld modules of generic characteristic defined over $K$ such that ${\End}_{K^{\mathrm{sep}}}({\phi}_{i})=A.$
Let  ${\cal P}\in M_{A},$  $m\in {\mathbb N}\cup \{0\}$.
Let $x_{i,j}\in {\phi}_{i}({\cal O}_K)$ for some $s_{i}$ and  $1\leq j \leq s_{i}$ be linearly independent elements over $A$  and let $T_{i,j}\in {\phi}_{i}[{\cal P}^m]$  be arbitrary torsion points for all $1\leq j\leq s_{i}$ and $1\leq i\leq t.$     
Then there is a set $W$ of prime ideals $\cal W$ of ${\cal O}_{K}$  of positive density such that  
$${\mathrm{red}}_{{\cal W}^{\prime}}(T_{i,j})={\mathrm{red}}_{{\cal W}}(x_{i,j})$$ in $ {\phi}_{i}^{{\mathcal W}}({\mathcal O}_K/{{\cal W}})_{{\pi}_{\cal P}}$ for all $1\leq j\leq s_{i}$ and  $1\leq i\leq t,$ where 
${\cal W}^{\prime}$ is a prime in $O_L$ over ${\cal W},$ where $L$  is the compositum   of fields ${K({\phi}_{i}[{\cal P}^m])}\, {\mathrm{for}} \, 1\leq i\leq t$, ${\mathrm{red}}_{{\cal W}^{\prime}}:{\phi}_{i}({\cal O_L})\rightarrow {\phi}_{i}^{\cal W}({k_{{\cal W}^{\prime}}})$ and ${k_{{\cal W}^{\prime}}}={{{\cal O}_L}}/{{\cal W}^{\prime}}.$
\end{thm}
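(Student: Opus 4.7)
The plan is to reduce Theorem \ref{redthm11} to the previously established Theorem \ref{redthm} by base-changing from $K$ to the field $L$ over which every prescribed torsion point $T_{i,j}$ becomes rational. Set
\[
y_{i,j} := x_{i,j} - T_{i,j} \in {\phi}_i({\cal O}_L).
\]
Once we show $\mathrm{red}_{{\cal W}'}(y_{i,j})=0$ in ${\phi}_i^{{\cal W}'}({\cal O}_L/{\cal W}')_{{\pi}_{\cal P}}$ for a positive-density set of primes ${\cal W}'$ of ${\cal O}_L,$ the conclusion follows: because $x_{i,j}\in{\phi}_i({\cal O}_K)$ and the reduction map at ${\cal W}:={\cal W}'\cap{\cal O}_K$ factors through reduction at ${\cal W}'$ via the inclusion $k_{\cal W}\hookrightarrow k_{{\cal W}'},$ we obtain
\[
\mathrm{red}_{{\cal W}'}(T_{i,j}) = \mathrm{red}_{{\cal W}'}(x_{i,j}) = \mathrm{red}_{{\cal W}}(x_{i,j}),
\]
which is precisely the desired equality.

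To apply Theorem \ref{redthm} with base field $L$ in place of $K,$ two hypotheses need checking. First, $\End_{L^{\mathrm{sep}}}({\phi}_i)=A$: since $\cal P$ differs from the generic characteristic, the polynomials ${\phi}_{i,{\cal P}^m}$ are separable, so $L\subset K^{\mathrm{sep}}$ and hence $L^{\mathrm{sep}}=K^{\mathrm{sep}},$ leaving the endomorphism ring unchanged. Second, for each fixed $i$ the elements $y_{i,1},\dots,y_{i,s_i}$ must be $A$-linearly independent in ${\phi}_i({\cal O}_L).$ A putative relation $\sum_j a_j y_{i,j}=0$ rewrites as $z:=\sum_j a_j x_{i,j} = \sum_j a_j T_{i,j}\in{\phi}_i[{\cal P}^m],$ so $z$ is a torsion element lying in ${\phi}_i({\cal O}_K).$ If $b\in A\setminus\{0\}$ annihilates $z,$ then $\sum_j (ba_j)x_{i,j}=0$ in ${\phi}_i({\cal O}_K),$ and the $A$-linear independence of the $x_{i,j}$ together with $A$ being a domain forces $a_j=0$ for every $j.$

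With these hypotheses in hand, Theorem \ref{redthm} (applied over $L$) supplies a positive-density set of primes ${\cal W}'\subset{\cal O}_L,$ away from the finitely many primes of bad reduction and those dividing $\cal P,$ at which $\mathrm{red}_{{\cal W}'}(y_{i,j})=0$ in ${\phi}_i^{{\cal W}'}({\cal O}_L/{\cal W}')_{{\pi}_{\cal P}}$ for all relevant indices. Unwinding via the factorization of reduction maps noted in the first paragraph yields the asserted equality, and the set $W$ of primes $\cal W$ of ${\cal O}_K$ lying under such ${\cal W}'$ has positive density because $L/K$ is a finite extension, so each prime of ${\cal O}_K$ has only boundedly many primes of ${\cal O}_L$ above it.

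The only genuinely nontrivial point is the linear-independence verification of the second paragraph; subtracting torsion could in principle collapse the family $\{x_{i,j}\}_j,$ but the standard observation that no nontrivial $A$-combination of $A$-linearly independent elements can be torsion (since $A$ is a domain acting on an $A$-module) rules this out. Everything else is routine bookkeeping: the compatibility of reduction maps under the finite extension $L/K,$ the identification of the $\pi_{\cal P}$-torsion parts of the reduced Mordell--Weil groups, and the descent of Chebotarev density from $\mathrm{Spec}\,{\cal O}_L$ to $\mathrm{Spec}\,{\cal O}_K.$
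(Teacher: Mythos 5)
Your proposal is correct and follows essentially the same route as the paper, whose entire proof reads ``Put $x_{i,j}=P_{i,j}-T_{i,j}$, take $L$ instead of $K$ and apply Theorem \ref{redthm}.'' You have simply supplied the details the authors leave implicit --- that $\End_{L^{\mathrm{sep}}}(\phi_i)=A$ persists because $L\subset K^{\mathrm{sep}}$, that subtracting torsion preserves $A$-linear independence since $A$ is a domain, and that positive density descends from $\operatorname{Spec}{\cal O}_L$ to $\operatorname{Spec}{\cal O}_K$ --- all of which are verified correctly.
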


\begin{proof}
Put $x_{i,j}=P_{i,j}-T_{i,j},$ take $L$ instead of $K$ and apply Theorem \ref{redthm}.
\end{proof}

\section{Local to global principle for ${\mathbf t}$-modules associated to direct sums of Drinfeld modules}

In what follows we assume to be working in the category of Anderson ${\mathbf t}$-modules that are direct products of Drinfeld modules.
Such situation was considered in \cite{pt06}.
So, we assume that the ${\mathbf t}$-module ${\widehat\varphi}={\phi}_{1}^{e_{1}}\times\dots\times {\phi}_{t}^{{e}_{t}},$ where ${\phi}_{i}$ is a Drinfeld module of generic characteristic of rank $d_{i}. $ We assume that for every $1\leq i \leq t,$  ${\End}_{K^{\mathrm{sep}}}({\phi}_{i})=A$
and all modules are defined over the same ring of integers ${\cal O}_{K}.$ We further assume that we are given a finitely generated $A$-submodule ${N}={N}_{1}^{e_1}\times\dots\times {N}_{t}^{e_t}$ of the Mordell-Weil group ${\widehat\varphi}({\cal O}_{K})={\phi}_{1}({\cal O}_{K})^{e_{1}}\times\dots\times {\phi}_{t}({\cal O}_{K})^{{e}_{t}},$ where $N_i\subset {\phi}_i({\cal O}_K).$
\begin{rem}\label{infgen}
According to the result of B.Poonen \cite{p95} the Mordell-Weil group is a direct sum of a free $A$-module on ${\aleph}_0$ generators and a finite torsion module.
\end{rem}
In view of  Remark \ref{infgen}, the  theorem analogous to Theorem 4.1 of \cite{bk11}  asserts the following:

\begin{thm}\label{localglobal}
Let ${\widehat\varphi}={\phi}_{1}^{e_{1}}\times\dots\times {\phi}_{t}^{e_{t}}$ be a ${\mathbf t}$-module where ${\phi}_{i}$ for $1\leq i \leq t$ are pairwise non-isogenous Drinfeld modules of generic characteristic  defined over ${\cal O}_K.$ 
 We assume that for each  $1\leq i\leq t,$
${\End}_{K^{\mathrm{sep}}}({\phi}_{i})=A.$ Let $N_{i}\subset {\phi}_{i}({\cal O}_{K})$ be a finitely generated $A$-submodule of the Mordell-Weil group. Pick ${\Lambda}\subset N=N_{1}^{e_{1}}\times \dots \times N_{k}^{e_{t}}$ to be an $A$-submodule. Assume that $d_{i}={{\mathrm rank}}({\phi}_{i})\geq e_{i}$ for each $1\leq i\leq t.$ Let $P\in N$ and assume that for almost all primes ${\cal P}$ of ${\cal O}_K$ we have
${\mathrm{red}}_{\cal P}(P)\in {\mathrm{red}}_{\cal P}({\Lambda}).$ Then $P\in {\Lambda}+N_{\tor}.$
\end{thm}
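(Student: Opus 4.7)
I argue by contradiction, mimicking the strategy of \cite{bk11}. Suppose $P\notin\Lambda+N_{\tor}$. The aim is to pick a prime $\mathcal{P}\in M_{A}$ and produce a positive-density family of primes $\mathcal{W}\subset\mathcal{O}_{K}$ at which $\mathrm{red}_{\mathcal{W}}(P)\notin\mathrm{red}_{\mathcal{W}}(\Lambda)$, contradicting the hypothesis. The two essential tools are Theorem~\ref{redthm}, which kills the $\pi_{\mathcal{P}}$-primary reductions of generators of $\Lambda$, and its refinement Theorem~\ref{redthm11}, which lets one simultaneously prescribe nonzero $\pi_{\mathcal{P}}$-torsion behaviour of $P$.

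\textbf{Setup and application of the reduction theorem.} Because $A=\mathbb{F}_{q}[t]$ is a PID and $N$ is finitely generated over $A$, the quotient $M:=N/(\Lambda+N_{\tor})$ is finitely generated over $A$. The class $\bar P\in M$ is nonzero, so by the structure theorem for finitely generated modules over a PID one can choose a prime $\mathcal{P}=(\pi_{\mathcal{P}})\in M_{A}$, avoiding any prescribed finite bad set, such that $\bar P\notin\pi_{\mathcal{P}}M$; equivalently, $P\notin\Lambda+N_{\tor}+\pi_{\mathcal{P}}N$. Next, fix a finite $A$-generating set of $\Lambda$ and list its coordinate projections in each $\phi_{i}(\mathcal{O}_{K})$. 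Using Poonen's decomposition of $\phi_{i}(\mathcal{O}_{K})$ into a free part and a torsion part, extract for each $i$ a maximal $A$-linearly independent subfamily $\{x_{i,1},\dots,x_{i,s_{i}}\}$ modulo torsion. Applying Theorem~\ref{redthm} to $\{x_{i,j}\}_{i,j}$ with the chosen $\mathcal{P}$ produces a positive-density set $W$ of primes $\mathcal{W}\subset\mathcal{O}_{K}$ at which $\mathrm{red}_{\mathcal{W}}(x_{i,j})=0$ in $\phi_{i}^{\mathcal{W}}(k_{\mathcal{W}})_{\pi_{\mathcal{P}}}$. Combined with the injectivity of the reduction map on prime-to-$\mathcal{W}$ torsion (Proposition~\ref{injection}), this forces the $\pi_{\mathcal{P}}$-primary component $\mathrm{red}_{\mathcal{W}}(\Lambda)_{\pi_{\mathcal{P}}}$ to lie in a controlled submodule at every $\mathcal{W}\in W$.

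\textbf{Contradiction and main obstacle.} Under the hypothesis $\mathrm{red}_{\mathcal{W}}(P)\in\mathrm{red}_{\mathcal{W}}(\Lambda)$, projection onto the $\pi_{\mathcal{P}}$-primary part of $\widehat\varphi^{\mathcal{W}}(k_{\mathcal{W}})$ would force $\mathrm{red}_{\mathcal{W}}(P)_{\pi_{\mathcal{P}}}$ into the same controlled submodule. I invoke Theorem~\ref{redthm11} in the compositum $L=K(\widehat\varphi[\pi_{\mathcal{P}}])$ to prescribe the $\pi_{\mathcal{P}}$-torsion part of $\mathrm{red}_{\mathcal{W}'}(P)$ at some prime $\mathcal{W}'\mid\mathcal{W}$ so as to encode the non-vanishing of $\bar P$ in $M/\pi_{\mathcal{P}}M$, producing the contradiction. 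The assumption $d_{i}\ge e_{i}$ enters here exactly as the Hodge-dimension inequality does in Theorem~\ref{thm A}: it guarantees that the Tate module $T_{\mathcal{P}}(\phi_{i})^{e_{i}}$ of the $e_{i}$-fold factor has enough independent $A_{\mathcal{P}}$-directions to realize the required Frobenius configuration across the $e_{i}$ coordinates. The genuine difficulty of the proof lies here: combining the openness of the Kummer image in Proposition~\ref{prop1} and Theorem~\ref{mainthm} with a matrix-rank argument, in the spirit of \cite{bk11}, applied factor by factor to $\mathrm{Mat}_{d_{i}\times s_{i}}(A_{\mathcal{P}})$ in order to force $\mathrm{red}_{\mathcal{W}}(\lambda_{m})_{\pi_{\mathcal{P}}}=0$ for every generator $\lambda_{m}$ of $\Lambda$ while \emph{simultaneously} keeping $\mathrm{red}_{\mathcal{W}}(P)_{\pi_{\mathcal{P}}}\ne 0$ on a positive-density subset of $\mathcal{W}\in W$.
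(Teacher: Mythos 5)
Your overall strategy (argue by contradiction, invoke the reduction theorems, and use the semisimple-module machinery of \cite{bk11}) is the right one, but there are two concrete gaps. First, the reduction step is false as stated: for a finitely generated module $M$ over the PID $A$ with $\bar P\neq 0$ it need \emph{not} be true that $\bar P\notin\pi_{\mathcal P}M$ for some prime $\mathcal P$. Take $M=A/(\pi^2)$ and $\bar P$ the class of $\pi$: then $\bar P\in\pi M$, and for every other prime $\rho$ one has $\rho M=M$, so no prime works; this situation occurs already for $\Omega=AP_1$, $\Lambda=A\pi^2P_1$, $P=\pi P_1$. The correct statement, and the one the paper uses, is that $P\notin\Lambda$ implies $P\otimes 1\notin\Lambda\otimes_AA_{\mathcal U}$ for some completion $A_{\mathcal U}$; writing $\Lambda=\sum_i Av_iP_i$ and $P=\sum_i n_iP_i$ in an $A$-basis of the torsion-free module $\Omega=cN$, this yields an index $j_0$ and an exponent $m_1$ with $\mathcal U^{m_1}\Vert n_{j_0}$ and $\mathcal U^{m_1+1}\mid v_{j_0}$, and the final contradiction must be run modulo $\mathcal U^{m}$ for large $m$, not modulo $\mathcal U$.

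Second, and more seriously, the step you yourself flag as ``the genuine difficulty'' is left unexecuted, and the two-stage plan you sketch for it cannot work as described: Theorems \ref{redthm} and \ref{redthm11} require their input points to be $A$-linearly independent, but $P$ may well lie in the $F$-span of $\Lambda$ (as in the example above), so you cannot first kill the $\pi_{\mathcal P}$-primary reductions of a maximal independent family in $\Lambda$ and then \emph{independently} prescribe a nonzero $\pi_{\mathcal P}$-torsion value for $\mathrm{red}_{\mathcal W}(P)$. The paper resolves this by applying Theorem \ref{redthm11} exactly once, to a single basis $\{\omega_k(i)\}$ of $\Omega\otimes_AF$ adapted to the splitting $\Omega\otimes_AF\cong\Im{\bm s}\oplus\ker{\bm\pi}$ furnished by Lemmas \ref{simple2} and \ref{trace}: the basis vectors in $\ker{\bm\pi}$ are sent to $0$, while the $k_i\leq e_i$ basis vectors in $\Im{\bm s}(i)$ are sent to torsion points $T_k(i)$ that are images of $A_{\mathcal U}$-linearly independent elements $\eta_k(i)$ of $T_{\mathcal U}(\phi_i)$. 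This is precisely where $e_i\leq d_i=\mathrm{rk}_{A_{\mathcal U}}T_{\mathcal U}(\phi_i)$ enters (not in realizing a ``Frobenius configuration across coordinates''). Expressing $M_2(P-Q)$ for $Q\in\Lambda$ in this basis, reducing at $\mathcal W$, and comparing coefficients in the free module $T_{\mathcal U}(\phi_i)/\upsilon^{n'}T_{\mathcal U}(\phi_i)$ then contradicts the $\mathcal U$-adic non-divisibility above. Without this construction your argument does not close.
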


Let $F={\mathbb F}_{q}(t)$ be the field of fractions of $A.$
Notice that $d_{i}={\dim}_{F_{\cal P}}V_{\cal P}({\phi}_{i})$ for $ {\cal P}\in M_A$ (cf.  Remark \ref{dim}).
\begin{cor}\label{incl}
If $N_{\tor}\subset {\Lambda}$ then the following conditions are equivalent
\begin{enumerate}
\item[(1)] \quad $P\in{\Lambda}$
\item[(2)] \quad ${\mathrm{red}}_{\cal P}(P)\in {\mathrm{red}}_{\cal P}({\Lambda})$ for almost all primes of ${\cal O}_K.$
\end{enumerate}
\end{cor}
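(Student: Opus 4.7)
The plan is straightforward since this corollary is an immediate consequence of Theorem \ref{localglobal}, with the only additional input being the hypothesis $N_{\tor} \subset \Lambda$ to promote the conclusion from ``up to torsion'' to an exact membership.

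First, I would dispatch the direction $(1) \Rightarrow (2)$: this is purely formal. If $P \in \Lambda$, then for every prime $\cal P$ of ${\cal O}_K$ (good or bad) we have $\mathrm{red}_{\cal P}(P) \in \mathrm{red}_{\cal P}(\Lambda)$, because $\mathrm{red}_{\cal P}$ is an $A$-module homomorphism and hence carries $\Lambda$ to $\mathrm{red}_{\cal P}(\Lambda)$. In particular (2) holds for almost all primes.

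For the direction $(2) \Rightarrow (1)$, I would invoke Theorem \ref{localglobal} directly. The hypotheses of that theorem are precisely in place: ${\widehat\varphi} = \phi_1^{e_1} \times \cdots \times \phi_t^{e_t}$ with pairwise non-isogenous $\phi_i$ satisfying $\End_{K^{\mathrm{sep}}}(\phi_i) = A$, the rank condition $d_i \geq e_i$, and $P \in N$ with $\mathrm{red}_{\cal P}(P) \in \mathrm{red}_{\cal P}(\Lambda)$ for almost all $\cal P$. The theorem therefore yields
\[
P \in \Lambda + N_{\tor}.
\]
Now the supplementary hypothesis $N_{\tor} \subset \Lambda$ forces $\Lambda + N_{\tor} = \Lambda$ (as $\Lambda$ is an $A$-submodule of $N$ and absorbs $N_{\tor}$), so $P \in \Lambda$, as required.

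There is no real obstacle: the only nontrivial ingredient is Theorem \ref{localglobal} itself, which has already been established. The role of the corollary is simply to record the clean equivalence under the natural torsion-absorption hypothesis $N_{\tor} \subset \Lambda$, mirroring the second half of Theorem \ref{thm A} in the Drinfeld setting.
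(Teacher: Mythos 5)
Your proof is correct and follows exactly the intended argument: the paper states this corollary as an immediate consequence of Theorem \ref{localglobal}, with $(1)\Rightarrow(2)$ formal and $(2)\Rightarrow(1)$ obtained from $P\in\Lambda+N_{\tor}=\Lambda$ under the absorption hypothesis $N_{\tor}\subset\Lambda$.
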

Recall some facts from \cite{bk11} section 3 concerning modules over division algebras adapted for our situation. Let $D_{i}= F={\End}({\phi}_{i}){\otimes}_{A}F.$
Then ${\End}({\widehat\varphi})=M_{e_1}(A)\times \dots \times M_{e_t}(A)$ and ${\End}({\widehat\varphi}){\otimes}_{A}F= M_{e_1}(F)\times \dots \times M_{e_t}(F).$
\begin{dfn}\label{simple1}
Let $K_{1}(j)$ be the left ideal of the matrix algebra $M_{e_j}(F)$ consisting of matrices ${\bm{\alpha}}(j)_{1}=(a(j)_{l,m}), \quad  1\leq l,m \leq e_j$ such that $a(j)_{l,m}=0$ if $m\neq 1.$
\end{dfn}
\begin{dfn}\label{MeD}
For a $D_{i}$-vector space $W_{i}$ let ${\bm{\omega}}(i)=({\omega}(i),0,\dots ,0)^T\in W_{i}^{e_i}, \quad  {\omega}(i) \in W_{i}.$ Let ${\mathbb D}={\prod}_{i=1}^{t}D_{i},$ ${\mathbb M}_{e}({{\mathbb D}})={\prod}_{i=1}^{t}M_{e_i}(D_{i}),$ where $e=(e_{1},\dots , e_{t}).$
\end{dfn}
\begin{rem}\label{W}
Let $W_{i}$ be a finite dimensional $F$-vector space over $D_{i}, \, 1\leq i\leq t.$ Then $W={\bigoplus}_{i=1}^{t}W_{i}^{e_{i}}$ has an obvious ${\mathbb M}_{e}({\mathbb D})$-module structure.
\end{rem}
The following lemma is essentially a specialisation of Corollary 3.2 of \cite{bk11}.
\begin{lem}\label{simple2}
Every nonzero simple ${\mathbb M}_{e}({\mathbb D})$-submodule of  $W={\bigoplus}_{i=1}^{t}W_{i}^{e_{i}}$ is of the form 
$$K(j)_{1}{\bm{\omega}}(j)=\{\,(a_{1,1}{\omega}({j}),\dots , a_{e_j, 1}{\omega}(j))^T : a_{k,1}\in D_{j}=F, \, 1\leq t\leq e_{j} \}$$
where $1\leq j\leq t$ and ${\omega}(j)\in W_j.$
\end{lem}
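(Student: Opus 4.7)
The plan rests on exploiting the block product structure $\mathbb{M}_e(\mathbb{D}) = \prod_{j=1}^t M_{e_j}(F)$: this is a finite product of matrix algebras over the field $F$, each factor $M_{e_j}(F)$ being a simple Artinian algebra whose unique simple left module (up to isomorphism) is the column ideal $K(j)_1$ of Definition~\ref{simple1}. The associated central orthogonal idempotents $\varepsilon_j := (0,\ldots,0,I_{e_j},0,\ldots,0) \in \mathbb{M}_e(\mathbb{D})$, with $I_{e_j}$ the identity of the $j$-th factor, satisfy $\varepsilon_j \varepsilon_{j'} = 0$ for $j \neq j'$ and $\sum_j \varepsilon_j = 1$, and they act on $W = \bigoplus_i W_i^{e_i}$ by $\varepsilon_j W = W_j^{e_j}$ in view of Remark~\ref{W}.

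First step: given a nonzero simple submodule $S \subseteq W$, the decomposition $S = \bigoplus_j \varepsilon_j S$ into $\mathbb{M}_e(\mathbb{D})$-submodules forces exactly one summand to be nonzero by simplicity. Calling that index $j$, we have $S \subseteq W_j^{e_j}$ and the action on $S$ factors through $M_{e_j}(F)$, so it suffices to show that any nonzero simple $M_{e_j}(F)$-submodule of $W_j^{e_j}$ has the claimed form.

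Second step: pick any nonzero $s = (s_1,\ldots,s_{e_j})^T \in S$ and an index $k$ with $s_k \neq 0$, and let $E_{1,k} \in M_{e_j}(F)$ denote the matrix unit with $1$ in position $(1,k)$ and zeros elsewhere. Then $\bm{\omega}(j) := E_{1,k}\,s = (s_k,0,\ldots,0)^T$ is a nonzero element of $S$ having precisely the shape prescribed in Definition~\ref{MeD}, with $\omega(j) := s_k \in W_j$. A direct matrix computation (only the first column of a matrix contributes when acting on a vector supported in the first coordinate) yields
\begin{equation*}
M_{e_j}(F)\,\bm{\omega}(j) \;=\; \bigl\{(a_{1,1}\omega(j),\ldots,a_{e_j,1}\omega(j))^T : a_{l,1} \in F\bigr\} \;=\; K(j)_1\,\bm{\omega}(j),
\end{equation*}
so $K(j)_1\bm{\omega}(j)$ is a nonzero $M_{e_j}(F)$-submodule of the simple module $S$, and the equality $K(j)_1\bm{\omega}(j) = S$ follows at once, as required.

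I do not anticipate a substantive obstacle. The argument is elementary module theory for the semisimple Artinian algebra $\prod_j M_{e_j}(F)$ over the field $F$; the only point requiring care is the reduction from a general $s \in S$ to an element supported in a single coordinate, which is handled above by left multiplication by the matrix unit $E_{1,k}$, after which simplicity of $S$ delivers the result for free.
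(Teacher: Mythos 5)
Your argument is correct and complete. Note, however, that the paper itself offers no proof of this lemma at all: it simply remarks that the statement is ``essentially a specialisation of Corollary 3.2 of \cite{bk11}'' and leaves the verification to that reference, where the analogous fact is established for products of matrix algebras over general division algebras via the standard structure theory of semisimple modules. Your proof supplies the missing details directly: the decomposition by the central idempotents $\varepsilon_j$ isolates the block $W_j^{e_j}$ supporting the simple submodule $S$, and the matrix-unit trick $E_{1,k}s=(s_k,0,\dots,0)^T$ produces inside $S$ an element of the prescribed shape, after which $K(j)_1\bm{\omega}(j)=M_{e_j}(F)\bm{\omega}(j)$ is a nonzero submodule of the simple module $S$ and hence equals it. Two small observations: your argument nowhere uses that $D_j=F$ is a field rather than a division ring, so it in fact reproves the cited corollary of \cite{bk11} in the generality needed there; and while the lemma only asserts the forward implication (every nonzero simple submodule has the stated form), which is all you prove and all that is used later, the same computation shows conversely that each $K(j)_1\bm{\omega}(j)$ with $\bm{\omega}(j)\neq 0$ is simple, since any nonzero element of it regenerates the whole set under left multiplication by matrix units.
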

The trace homomorphism ${\tr}: {\mathbb M}_{e}(\mathbb D)\rightarrow F$ is defined as ${\tr}={\sum}_{i=1}^t {\tr}_{i}$ where ${\tr}_{i}: M_{e_{i}}(D_{i}) \rightarrow F$ is the usual trace homomorphism.
The following lemma corresponds to  Lemma 3.3 of \cite{bk11}.
\begin{lem}\label{trace}
The induced map
${\tr}: {\Hom}_{{\mathbb M}_{e}({\mathbb D})}(W, {{\mathbb M}_{e}({\mathbb D})})\rightarrow {\Hom}_{F}(W,F)$
is an isomorphism.
\end{lem}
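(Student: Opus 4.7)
The plan is to verify that the stated $F$-linear map $f\mapsto \tr\circ f$ is injective and that its source and target have the same finite $F$-dimension. Since $\End_{K^{\mathrm{sep}}}(\phi_i)=A$ forces $D_i=F$ for every $i$, the algebra $\mathbb{M}_e(\mathbb{D})$ is simply the product of matrix algebras $\prod_{i=1}^t M_{e_i}(F)$, the trace decomposes as $\tr=\sum_i \tr_i$ with $\tr_i$ the usual matrix trace, and everything reduces to linear algebra over the field $F$.

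For injectivity, I would take an $\mathbb{M}_e(\mathbb{D})$-linear map $f\colon W\to \mathbb{M}_e(\mathbb{D})$ satisfying $\tr\circ f=0$, and exploit module linearity to obtain
$$\tr\bigl(M\cdot f(w)\bigr)=\tr\bigl(f(M\cdot w)\bigr)=0\qquad\text{for every }M\in\mathbb{M}_e(\mathbb{D}),\ w\in W.$$
The bilinear form $(M,N)\mapsto \tr(MN)$ is non-degenerate on each matrix factor $M_{e_i}(F)$, hence on the product $\mathbb{M}_e(\mathbb{D})$; this immediately forces $f(w)=0$ for every $w\in W$.

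For the dimension count, the target satisfies $\dim_F\Hom_F(W,F)=\dim_F W=\sum_i e_i\dim_F W_i$. For the source, the product structure of $\mathbb{M}_e(\mathbb{D})$ splits the Hom-module as $\prod_i \Hom_{M_{e_i}(F)}(W_i^{e_i},M_{e_i}(F))$. Using the identification $W_i^{e_i}\cong F^{e_i}\otimes_F W_i$ as a left $M_{e_i}(F)$-module and the column decomposition $M_{e_i}(F)\cong (F^{e_i})^{e_i}$, Morita equivalence for $M_{e_i}(F)$ gives
$$\Hom_{M_{e_i}(F)}\bigl(F^{e_i}\otimes_F W_i,\,M_{e_i}(F)\bigr)\cong \Hom_F(W_i,F^{e_i}),$$
which has $F$-dimension $e_i\dim_F W_i$. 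Summing over $i$ matches $\dim_F W$, so injectivity upgrades to the desired isomorphism.

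There is no serious obstacle: the assumption $D_i=F$ collapses the general semisimple-algebra argument behind \cite[Lemma 3.3]{bk11} to the standard non-degeneracy of the trace form combined with a Morita dimension count, so the only care required is bookkeeping the indices $i=1,\dots,t$ and keeping track of the factorwise trace. This also explains why the lemma requires no hypothesis on the ranks $d_i$ or the multiplicities $e_i$ beyond what is built into the setup of Definition \ref{MeD}.
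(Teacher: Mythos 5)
Your argument is correct. Note first that the paper itself gives no details here: its entire proof is the sentence ``Replace $\mathbb{Q}$ by $F$ in the proof of Lemma 3.3 of [BK11]'', deferring to the number-field version of the lemma, which is stated for products of matrix algebras over genuine division algebras and uses reduced traces. You instead supply a complete, self-contained argument adapted to the special case at hand, where the hypothesis $\End_{K^{\mathrm{sep}}}(\phi_i)=A$ forces $D_i=F$ and $\mathbb{M}_e(\mathbb{D})=\prod_i M_{e_i}(F)$. Both halves of your argument are sound: the identity $\tr(Mf(w))=\tr(f(Mw))=0$ together with the non-degeneracy of the trace pairing on each $M_{e_i}(F)$ gives injectivity, and the factorwise Morita computation $\Hom_{M_{e_i}(F)}(F^{e_i}\otimes_F W_i, M_{e_i}(F))\cong \Hom_F(W_i,F^{e_i})$ gives $\sum_i e_i\dim_F W_i=\dim_F W$ on both sides (using that each $W_i$ is finite-dimensional, as in Remark \ref{W}). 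What your route buys is transparency and independence from the external reference in the commutative case; what the cited [BK11] route buys is uniformity over noncommutative $D_i$, which is irrelevant here but matters for the abelian-variety analogue. Your closing observation that the lemma needs no hypothesis on the $d_i$ or $e_i$ is also accurate.
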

\begin{proof}
Replace ${\mathbb Q}$ by $F$ in the proof of Lemma 3.3 of \cite{bk11}.
\end{proof}
Semisimplicity of ${{\mathbb M}_{e}({\mathbb D})}$ implies that the module $W$ is semisimple and therefore for any $\bm{\pi}\in  {\Hom}_{{\mathbb M}_{e}({\mathbb D})}(W, {{\mathbb M}_{e}({\mathbb D})})$ there exists 
${\bm s}: {\Im}{\bm{\pi}}\rightarrow W$ such that ${\bm{\pi}}\circ {\bm s}={\Id}.$ One has of course the following splittings ${\bm{\pi}}={\prod}_{i=1}^t {\Im}{\bm{\pi}}(i)$ and ${\bm s}={\bigoplus}_{i=1}^t{\bm s}(i)$ where 
$\bm{\pi}(i)\in  {\Hom}_{{M}_{e_i}({D}_i)}(W_i^{e_{i}}, {{M}_{e_i}({D}_i)}),$ $\bm{s}(i)\in  {\Hom}_{{M}_{e_i}({D}_i)}({\Im}{\bm{\pi}}(i), W_i^{e_i})$ and ${\bm{\pi}}(i)\circ {\bm s}(i)={\Id}.$

\subsection{Proof of Theorem \ref{localglobal}}
\begin{proof}

Since the $A$-torsion of $N$ is finite (cf. Remark \ref{infgen} ) we can consider a torsion free $A$-module 
${\Omega}:= cN, $  
   where $c=g_{1}\cdot ... \cdot g_{k}$ is the product of  generators of the $A$-anihilator of $N_{\tor}$, 
and replace $N$ by ${\Omega}.$ We can also assume that ${\Lambda}\subset {\Omega}$ and $P\in {\Omega}.$
Let ${P}_1, \dots {P}_s$ be an $A$-basis of ${\Omega}$ such that 
$${\Lambda}=Av_1P_1+\dots +Av_sP_s, \quad P=n_1P_1+\dots +n_sP_s$$
where $v_{i} ,n_{i}\in A$ for $1\leq i \leq s.$
Assume that $P\notin {\Lambda}.$ This is equivalent to $P{\otimes}1 \notin {\Lambda}{\otimes}_{A}{A}_{{\cal { U}}}$ for some   $\, {\cal U}\in A$ where 
${A}_{{\cal { U}}}$ is the completion of $A$ with respect to ${\cal U}.$
Thus there exists  $1\leq j_0 \leq s$ and a natural number $m_1$ such that ${\cal U}^{m_{1}} || n_{j_0}$ and  ${\cal U}^{m_{1}+1} | v_{j_0}.$
Define the following map of $A$-modules
$$
{\pi}:{\Omega}\rightarrow A, \qquad {\pi}(R)=a_{j_0}, \qquad R={\sum}_{i=1}^sa_iP_{i},\quad a_{i}\in A.
$$
We shall use the same letter ${\pi}$ as a short abbreviation for the map ${\pi}{\otimes}_{A}id_F : {\Omega}{\otimes}_{A}F \rightarrow F.$ By Lemma \ref{trace} we obtain a map ${\bm{\pi}}   \in {\Hom}_{{\mathbb M}_{e}({\mathbb D})}({\Omega}{\otimes}_AF,\, {{\mathbb M}_{e}({\mathbb D})})$ such that ${\tr}{\bm{\pi}}=\pi .$ By the above discussion we also have ${\bm{s}}$ such that ${\bm{\pi}}\circ {\bm{s}}=\Id .$
We have
$${\Omega}{\otimes}_AF\cong {\Im{\bm s}}\oplus {\text{\rm Ker}{\bm\pi}} \,\,\,\, {\text{\rm and}}\,\,\,\,  {\Omega}^{e_i}{\otimes}_AF\cong {\Im{\bm s}(i)}\oplus {\text{\rm Ker}{\bm\pi}(i)},\,\, 1\leq i \leq t.$$
By Lemma \ref{simple2} we have the following decompositions 
 $${\Im{\bm s}(i)}={\bigoplus}_{k=1}^{k_{i}}K(i)_{1}{\bm{\omega}}_{k}(i) \,\,\,\, {\text{\rm and}} \,\,\,\,  {\text{\rm Ker}{\bm\pi}(i)}={\bigoplus}_{k=k_i+1}^{u_{i}}K(i)_{1}{\bm{\omega}}_{k}(i).$$
 By assumptions $k_{i}\leq e_{i}\leq d_{i},$
for every $1\leq i \leq t.$ 
The elements ${\omega}_{1}(i),\dots , {\omega}_{k_i}(i),\dots , {\omega}_{u_i}(i)$ constitute a basis for the $F$-vector space ${\Omega}_i{\otimes}_{A}F.$ (Notice that $D_i=F$ by assumption.)
Without loss of generality one can assume that ${\omega}_{k_{i}+1}(i), \dots ,{\omega}_{u_i}(i) \in {\Omega}_i.$ ${\Omega}_i{\otimes}_{A}F $ is a free $F$-module.
We have ${\cal R}={\End_{A}{\Omega}}\subset {\mathbb M}_{e}({\mathbb D})={\cal R}{\otimes}_AF.$ Since ${\Omega}$ is a finitely generated $A$-module there exists a polynomial 
$M_0\in A$  such that the following homomorphisms are well defined
$M_0{\bm{\pi}} : {\Omega}\rightarrow {\cal R}\quad \,\, {\text{\rm and}}  \,\,\quad {\bm{s}}: M_0{\bm{\pi}} {\Omega}\rightarrow {\Omega}.$
Define the $M_{e_{i}}(A)$-module
$
{\Gamma}(i)={\sum}_{k=1}^{k_{i}}K(i)_1M_{0}{\bm{\omega}}_{k}(i)+{\sum}_{k=k_{i}+1}^{u_{i}}K(i)_{1}{\bm{\omega}}_{k}(i)\subset {\Omega}_{i}^{e_{i}}.
$
Let ${\Gamma}={\bigoplus}{\Gamma}(i)\subset \Omega$, $M_{2},M_{3}\in A$ be  polynomials of minimal degrees such that $M_{2}{\Omega} \subset {\Gamma}$ and $M_{3}{\Gamma} \subset M_{2}{\Omega}.$
The choice of $j_0$ implies ${\pi}(P)\notin {\pi}(\Lambda {\otimes}_{A} {A}_{{\cal U}})+ {\cal U}^m{\pi}({\Omega}{\otimes}_A{A}_{{\cal U}})$
for every $m> m_{1}.$ Choose such an $m.$ 
Then since $\tr M_{0}{\bm{\pi}}=M_{0}{\pi}$ we have 
\begin{equation}\label{uncon}
M_{0}{\bm\pi}(P)\notin M_{0}{\bm\pi}(\Lambda {\otimes}_{A} {A}_{{\cal U}})+ M_{0}\,{{\cal U}}^m{\bm\pi}({\Omega}{\otimes}_A{A}_{{\cal U}}).
\end{equation}
Let $K(i)_{1,{{\cal U}}}=K(i)_{1}{\otimes}_{A}{A}_{{\cal U}} \subset M_{e_{i}}({A}_{{\cal U}})$ and $Q\in \Lambda$.
By the definition of  $M_{2}\in A$ we have
\begin{equation}\label{eqkk2}
M_{2}(P\otimes 1 - Q{\otimes} 1)= M_{0}^2{\sum}_{i=1}^t{\sum}_{k=1}^{k_{i}}({\bm{\alpha}}_k(i)_{1} - {\bm{\beta}}_k(i)_{1} ){\bm{\pi}}( {\bm{\omega}}_{k}(i))
\end{equation}
where ${\bm{\alpha}}_{k}(i)_1, \, {\bm{\beta}}_{k}(i)_1 \in K(i)_{1, \,{{\cal U}}}, \,\, 1\leq k \leq u_i, \,\, 1\leq i\leq t .$
By (\ref{uncon}) and (\ref{eqkk2})  and the choice of $M_3$ we have 
$$M_{0}^2{\sum}_{i=1}^t{\sum}_{k=1}^{k_{i}}({\bm{\alpha}}_k(i)_{1} - {\bm{\beta}}_k(i)_{1}) {\bm{\pi}}( {\bm{\omega}}_{k}(i)) \notin {\cal U}^mM_{0}{\bm{\pi}}(M_{3}{\Gamma}).$$
This implies that for some $1\leq i\leq t$ and $1\leq k \leq k_{i}$ we have
\begin{equation}\label{uncon1}
{\bm{\alpha}}_k(i)_{1} - {\bm{\beta}}_k(i)_{1}\notin {{\cal U}}^m M_{3}K(i)_{1,\,{{\cal U}}}.
\end{equation}

Notice that for all $n^{\prime}\in {\mathbb N}$ there is an isomorphism
${\phi}_{i}[{\cal U}^{n^{\prime}}]\cong T_{\cal U}({\phi}_{i})/{\upsilon}^{n^{\prime}}T_{\cal U}({\phi}_{i})$
where ${\upsilon}$ is a generator of the ideal ${\cal U}.$
Let ${\cal L}_{\cal U}={\bigoplus}_{i=1}^{t}T_{\cal U}({\phi}_{i})$
and ${\eta}_{1}(i),\dots ,{\eta}_{d_{i}}(i)$ be  a basis of $T_{\cal U}({\phi}_{i})$ over $A_{\cal U}$ (cf. Remark \ref{dim}).
Let $m_0$ and $m_{3}$ be the natural numbers such that ${\upsilon}^{m_{0}}||M_{0}$ and ${\upsilon}^{m_{3}}|| M_{3}.$
Let $V_{{\cal U},i}=T_{{\cal U}}({\phi}_i){\otimes}_{A_{{\cal U}}}F_{\cal U}.$ Then ${\dim}_{F_{\cal U}}V_{{\cal U},i}=d_{i}$.
The quotient $T_{\cal U}({\phi}_{i})/{\upsilon}^{n^{\prime}}T_{\cal U}({\phi}_{i})$ of $A_{\cal U}$-modules is a free $A_{\cal U}/({\upsilon}^{n^{\prime}}A_{\cal U})$ - module with the basis 
$T_{1}(i),\dots , T_{d_{i}}(i)$ where $T_{k}(i)$ is the image of ${\eta}_{k}(i)$ in ${\phi}_i[{\cal U}^{n^{\prime}}].$ 
Let $\bm{\eta}_k(i)=({\eta}_k(i),0,\dots , 0)^T \in T_{\cal U}({\phi}_{i})^{e_i}$ and ${\bm T}_{k}(i)=(T_k(i),0,,\dots, 0)^T\in {\phi}_i[{\cal U}^{n^{\prime}}]^{e_i}$ (cf. Definition \ref{MeD} ).
Take $n^{\prime}>m+m_0 +m_3.$ By Theorem \ref{redthm11}, applied for $ {\cal P}= \cal U$, there exists a set of primes ${\cal W}\in{\cal O}_L$  (where $L$ is the compositum of fields defined  in Theorem \ref{redthm11}) of positive density such that
\begin{equation}\label{redut}
{\text{\rm red}}_{\cal W}({\omega}_{k}(i))=0, \quad {\mathrm{for}}\quad 1\leq i\leq t,\,\, k_{i}+1\leq k\leq u_{i}
\end{equation}
and
\begin{equation}\label{redut1}
{\text{\rm red}}_{\cal W}({\omega}_{k}(i))={\text{\rm red}}_{\cal W}(T_{k}(i)), \quad {\mathrm{for}}\quad 1\leq i\leq t,\,\, 1\leq k\leq k_{i}.
\end{equation}
Pick such a prime ${\cal W}$ that does not divide ${\cal U}$. Since by assumption ${\text{\rm red}}_{\cal W}(P)\in {\text{\rm red}}_{\cal W}({\Lambda})$ we can choose $Q\in\Lambda$ such that ${\text{\rm red}}_{\cal W}(P)={\text{\rm red}}_{\cal W}(Q).$
Now we apply the reduction map ${\text{\rm red}}_{\cal W}$ to the equation
$$
M_2(P-Q)  ={\sum}_{i=1}^{t}{\sum}_{k=1}^{k_{i}}({\bm{\alpha}}_k(i)_{1} - {\bm{\beta}}_k(i)_{1})M_{0}{\bm{\omega}}_{k}(i)+
   {\sum}_{i=1}^{t}{\sum}_{k=k_{i}+1}^{u_{i}}({\bm{\alpha}}_k(i)_{1} - {\bm{\beta}}_k(i)_{1}){\bm{\omega}}_{k}(i).
$$

Thus we obtain 
$
0={\sum}_{i=1}^{t}{\sum}_{k=1}^{k_{i}}({\bm{\alpha}}_k(i)_{1} - {\bm{\beta}}_k(i)_{1})M_{0}{\text{\rm red}}_{\cal W}({\bm{T}}_{k}(i)).
$
  \, Since the reduction map ${\text{\rm red}}_{\cal W}$ is injective on a torsion prime  to a characteristic of a Drinfeld module, cf. Proposition \ref{injection}, we have
$0={\sum}_{i=1}^{t}{\sum}_{k=1}^{k_{i}}({\bm{\alpha}}_k(i)_{1} - {\bm{\beta}}_k(i)_{1})M_{0}{\bm{T}}_{k}(i).$
Therefore the element \linebreak
${\upsilon}^{m_{0}}{\sum}_{i=1}^{t}{\sum}_{k=1}^{k_{i}}({\bm{\alpha}}_k(i)_{1} - {\bm{\beta}}_k(i)_{1}){\bm{\eta}}_{k}(i)$
maps to zero in $A_{\cal U}/{\upsilon}^{n^{\prime}}A_{\cal U}$-module ${\cal L}_{\cal U}/{\upsilon}^{n^{\prime}}{\cal L}_{\cal U}.$
This yields 
$${\sum}_{i=1}^{t}{\sum}_{k=1}^{k_{i}}({\bm{\alpha}}_k(i)_{1} - {\bm{\beta}}_k(i)_{1}){\bm{\eta}}_{k}(i)\in {\upsilon}^{n^{\prime}-m_0}{\cal L}_{\cal U}.$$
But since ${\eta}_{k}(i) ,  1\leq k\leq d_{i},$ constitute a basis of $T_{\cal U}({\phi}_{i})$ over $A_{\cal U}$ we obtain the following:
\begin{equation}\label{eqk5} 
{\bm{\alpha}}_k(i)_{1} - {\bm{\beta}}_k(i)_{1} \in {\upsilon}^{n^{\prime}-m_{0}}K(i)_{{1}, \,{\cal U}}.
\end{equation}
But (\ref{eqk5}) contradicts (\ref{uncon1}).
\end{proof}
We also have the following theorem:
\begin{thm}\label{best}
Let ${\phi}$ be a Drinfeld module of rank $d$ defined over ${\cal O}_{K}$  with ${\End}(\phi)=A.$ 
Then the numerical bound in Theorem \ref{localglobal} is the best possible. That is, 
 for the ${\mathbf t}$-module ${\phi}^{d+1}$ the local to global principle of Theorem \ref{localglobal} does not hold.
\end{thm}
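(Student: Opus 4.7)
\medskip

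The strategy is to construct an explicit counterexample: a point $P \in \phi^{d+1}(\mathcal{O}_K)$ and an $A$-submodule $\Lambda$ of a finitely generated $N \subset \phi^{d+1}(\mathcal{O}_K)$ satisfying $\mathrm{red}_{\cal P}(P) \in \mathrm{red}_{\cal P}(\Lambda)$ for almost every maximal ideal $\cal P$ of $\mathcal{O}_K$, yet $P \notin \Lambda + N_{\tor}$. The template to imitate is Proposition 4.2 of \cite{bk11}, which proves the analogous sharpness statement in the abelian variety context; the inputs we now have (Kummer-theoretic, open-image, and Tate-module) make essentially the same argument available for Drinfeld modules.

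By Poonen's structural theorem (Remark \ref{infgen}), and by Proposition \ref{wang} applied to the finitely generated field $K$, the Mordell-Weil group $\phi(\mathcal{O}_K)$ contains an $A$-submodule that is free of countably infinite rank, so in particular one may fix $d+1$ points $P_0, P_1, \dots, P_d \in \phi(\mathcal{O}_K)$ that are $A$-linearly independent modulo torsion. I would set
\[
P := (P_0, P_1, \dots, P_d) \in \phi^{d+1}(\mathcal{O}_K),
\qquad
N := (AP_0 + AP_1 + \dots + AP_d)^{d+1},
\]
and take $\Lambda \subset N$ to be the $A$-submodule generated by a carefully chosen collection of $(d+1)$-tuples whose first coordinate is spanned by $P_1, \dots, P_d$ only (omitting $P_0$), with the remaining coordinates forced to span the correct local relations. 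The verification that $P \notin \Lambda + N_{\tor}$ is then an immediate consequence of $A$-linear independence of $P_0,\dots,P_d$ modulo torsion together with the finiteness of $N_{\tor}$: projection onto the first factor would force $P_0 \in AP_1 + \dots + AP_d + \phi(\mathcal{O}_K)_{\tor}$.

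For the local half, the crucial structural input is that $T_{\cal P}(\phi)$ is free of rank exactly $d$ over $A_{\cal P}$ (Remark \ref{dim}), so the $d+1$ elements $\mathrm{red}_{\cal P}(P_0), \dots, \mathrm{red}_{\cal P}(P_d)$ must satisfy a non-trivial $F_{\cal P}$-linear relation in $V_{\cal P}(\phi)$. Exactly this mismatch between the number of points ($d+1$) and the rank of the Tate module ($d$) gives room to express $\mathrm{red}_{\cal P}(P)$ as an element of $\mathrm{red}_{\cal P}(\Lambda)$, after clearing a uniform bounded denominator supplied by the open image theorem of Pink-R\"utsche (cf.\ (\ref{openimage})) and Theorem \ref{mainthm}. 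The main obstacle I expect is that the $A_{\cal P}$-linear relation among the reductions varies with $\cal P$, whereas $\Lambda$ must be chosen once and for all; this is precisely where the $(d+1)$-fold product structure is exploited, so that a single generating set of $\Lambda$ simultaneously absorbs every local relation at almost all primes. Under the hypothesis $e_i \le d_i$ governed by Theorem \ref{localglobal} no such global arrangement can exist, and that contrast is what witnesses sharpness of the numeric bound.
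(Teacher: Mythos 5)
You have the right template (the Jossen--Perucca counterexample with $e=d+1$ independent points, which is exactly what the paper adapts), and you correctly locate the source of the failure in the fact that the reduction has torsion of $A/\tilde{\mathcal P}$-rank at most $d<d+1$. But the construction of $\Lambda$ is the entire content of the theorem, and it is missing; worse, the one structural feature you do commit to is self-defeating. If every generator of $\Lambda$ has first coordinate in $AP_1+\dots+AP_d$ (your stated design, and also the basis of your verification that $P\notin\Lambda+N_{\tor}$), then the hypothesis $\mathrm{red}_{\mathcal W}(P)\in\mathrm{red}_{\mathcal W}(\Lambda)$ for almost all $\mathcal W$ would give $\mathrm{red}_{\mathcal W}(P_0)\in\mathrm{red}_{\mathcal W}(AP_1+\dots+AP_d)$ for almost all $\mathcal W$; applying Theorem \ref{localglobal} to the single Drinfeld module $\phi$ with $t=1$, $e_1=1\le d$ (a case of the theorem you are not disputing) then forces $P_0\in AP_1+\dots+AP_d+\phi({\cal O}_K)_{\tor}$, contradicting the independence of $P_0,\dots,P_d$. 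So no $\Lambda$ of the shape you describe can satisfy the local condition: the global obstruction cannot be ``a missing coordinate direction.''

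The paper's choice is $\Lambda=\{MP:\ M\in\mathrm{Mat}_{e\times e}(A),\ \mathrm{tr}\,M=0\}$ with $P=(P_1,\dots,P_e)^T$. Globally, $P=MP$ would give the relation $(I-M)P=0$ with $I-M\ne 0$, impossible by independence. Locally, one uses only the finiteness of $\phi^{\mathcal W}(k_{\mathcal W})$ to produce, for each $i$, a minimal-degree $\alpha_i\in A$ with $\alpha_i\bar P_i$ lying in the span of the other reduced points, proves $\gcd(\alpha_1,\dots,\alpha_e)=1$ because by Proposition \ref{wang} the relevant torsion of the reduction is $(A/\tilde{\mathcal P})^{d-\bar h}$ and hence cannot accommodate $e=d+1$ independent constraints, and then assembles from a B\'ezout identity a trace-zero matrix $M$ with $M\bar P=\bar P$. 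Your sketch of the local step is also off target: what is needed is an identity with coefficients in $A$ holding in the finite group $\phi^{\mathcal W}(k_{\mathcal W})$, obtained from finiteness of that group and the torsion structure of the reduction, not an $F_{\mathcal P}$-linear relation in $V_{\mathcal P}(\phi)$ (the non-torsion points $P_i$ have no images there) nor a denominator-clearing via the open image theorem. As written, the proposal names the right phenomenon but does not contain a proof.
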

\begin{proof}
Our proof is modelled on the counterexample to the local  to global principle for abelian varieties constructed by P. Jossen and A.Perucca in \cite{jp10}.
Let $e=d+1$ and $P_{1},\dots ,P_{e}\in {\phi}({\cal O}_{K})$ be points linearly independent over $A.$
Let
\begin{align*}\label{cex}
    P &:= \begin{bmatrix}
           P_{1} \\
           P_{2} \\
           \vdots \\
           P_{e}
         \end{bmatrix}
         \quad
         {\Lambda}:= \{ MP \,: \quad M\in {\mathrm{Mat}}_{e\times e}(A), \quad {\tr}M=0 \}.
  \end{align*}
Denote for shorthand  ${\kappa}={{\cal O}_{K}}/{\cal P}.$ Notice that $P\notin {\Lambda}$ since $P_{1},\dots ,P_{e}$ are $A$-linearly independent.
Let ${\cal W}\in {\cal O}_{K}$ be a prime of good reduction for ${\phi}.$
We will find a matrix $M\in {\mathrm{Mat}}_{e\times e}(A)$ such that $\overline{P}=M{\overline{P}}$ where $\overline{P}=[\overline{P}_{1}\,,\dots ,\overline{P}_{e}]^{T}$ is the reduction  ${\mod}\,\, {\cal W}$ of the point $P.$ This will show that ${\mathrm{red}}_{\cal W}P\in {\mathrm{red}}_{\cal W}{\Lambda}.$
Since the Mordell-Weil group ${\phi}({\kappa})[\cal P]$ is finite there exist polynomials ${\alpha}_{1},\dots , {\alpha}_{e}\in A$ of minimal degrees such that
\begin{align*} 
{\alpha}_{1}{\overline P}_{1} +   m_{1,2}{\overline P}_2 + \dots  +  m_{1,e}{\overline P}_{e}  &=  0 \\
{m_{2,1}}{\overline P}_{1} +   {\alpha}_{2}{\overline P}_2 + \dots   +  m_{2,e}{\overline P}_{e}  &=  0 \\
\dots\dots\dots\dots\dots\dots\dots\dots\dots   &    \\
{m}_{e,1}{\overline P}_{1} +   m_{e,2}{\overline P}_2 + \dots  +  {\alpha}_{e}{\overline P}_{e}  &=  0 .
\end{align*}
We will show that $D=\gcd({\alpha}_{1},\dots , {\alpha}_{e})=1.$ 
It is enough to show that for any prime ideal ${\cal P}\lhd A$ the polynomial  $ D$ is not divisible by ${\cal P}.$
Assume opposite   and  let $\tilde{\cal P}$ be a prime that divides $D.$  This means, by our choice of ${\alpha}_{1},\dots , {\alpha}_e$  that ${\tilde{\cal P}}$  divides  coefficients of any linear combination of points ${\bar P}_{1},\dots, {\bar P}_e \in {\phi}^{\tilde{\cal P}}.$  
By Proposition \ref{wang} the group ${\phi}^{\tilde{\cal P}}({\kappa})[{\cal P}]$ is isomorphic to $(A/{\tilde{\cal P}})^{d-{\bar h}}.$
Therefore the group $Y=( {\bar P}_{1},\dots, {\bar P}_e )\cap {\phi}^{\tilde{\cal P}}({\kappa})[{\cal P}]$ is generated by fewer than $e$ elements.
We may assume without loss of generality that $Y=( {\bar P}_{2},\dots, {\bar P}_e )\cap {\phi}^{\tilde{\cal P}}({\kappa})[{\cal P}].$
Let 
\begin{equation}\label{cexd1}
{\alpha}_{1}{\bar P}_{1}+ {x}_{2}{\bar P}_{2}+\dots +{x}_{e}{\bar{P_{e}}}=0
\end{equation}
be a linear relation. Then since the left hand side of (\ref{cexd1}) is in $Y$ we obtain a contradiction with the minimality of ${\alpha}_{1}.$
Thus $D=1.$ 
Hence there exist $a_{1},\dots ,a_{e}\in A$ such that 
$$e=a_{1}{\alpha}_{1}+\dots  +a_{e}{\alpha}_{e}. $$
 Put $m_{i,i}=1-a_{i}{\alpha}_{i }.$  Then $m_{1,1}+\dots +m_{e,e} =0$ and 

\begin{align*}
\begin{bmatrix}
m_{1,1} & \dots & m_{1,e}\\
\dots &  \dots & \dots \\
m_{e,1} & \dots & m_{e,e}
\end{bmatrix}
\begin{bmatrix}
{\bar P}_{1}\\
 \dots \\
{\bar P}_{e}
\end{bmatrix}
=
\begin{bmatrix}
{\bar P}_{1}\\
 \dots \\
{\bar P}_{e}
\end{bmatrix} .
\end{align*}

Therefore ${\bar P}\in {\bar{\Lambda}}.$        
\end{proof}
Essentially the same proof - with ${\mathbb Z}$ substituted for $A$ - works for abelian varieties with ${\End}{\mathbb A}={\mathbb Z}$ and we obtain the following:

\begin{thm}\label{best1}
Let ${\mathbb A}/F$ be an abelian variety defined over a number field $F$  with ${\End}{\mathbb A}=\mathbb Z.$ Let $d={\dim}_{{\End}_{F^{\prime}}({\mathbb A}_i)^{0}}\,
H_{1} ({\mathbb A}({\mathbb C});\, {\mathbb Q})=2g,$ where $g={\dim}{\mathbb A}.$ Assume $ {\mathrm{rank}}A(F)>d.$ 
Then the numerical bound in Theorem \ref{thm A} is the best possible. That is,  for  ${\mathbb A}^{d+1}$ the local to global principle of Theorem \ref{thm A} does not hold.
\end{thm}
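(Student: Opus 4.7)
The plan is to imitate the proof of Theorem \ref{best} with $\mathbb{Z}$ in place of $A=\mathbb{F}_q[t]$, replacing Proposition \ref{wang} by the uniform bound $\dim_{\mathbb{F}_p}\mathbb{A}^v(\kappa)[p]\le 2g=d$ available for any abelian variety.

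Using $\operatorname{rank}_{\mathbb{Z}}\mathbb{A}(F)>d$, I choose $e:=d+1$ points $P_1,\dots,P_e\in\mathbb{A}(F)$ that are $\mathbb{Z}$-linearly independent, and form
\[
P:=(P_1,\dots,P_e)^T\in\mathbb{A}^e(F),\qquad \Lambda:=\{\,MP:M\in\operatorname{Mat}_{e\times e}(\mathbb{Z}),\ \operatorname{tr} M=0\,\}.
\]
Since the $P_i$ remain $\mathbb{Z}$-linearly independent modulo $\mathbb{A}(F)_{\mathrm{tor}}$, any identity $P=MP+T$ with $T\in\mathbb{A}^e(F)_{\mathrm{tor}}$ forces every row of $I-M$ to vanish (multiply through by an annihilator of the torsion and apply independence component-wise), so $M=I$, contradicting $\operatorname{tr} M=0$. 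Hence $P\notin\Lambda+\mathbb{A}^e(F)_{\mathrm{tor}}$.

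The heart of the proof is showing $\operatorname{red}_v(P)\in\operatorname{red}_v(\Lambda)$ for every prime $v$ of good reduction. With $\kappa=\mathcal{O}_F/v$ finite, so is $\mathbb{A}^v(\kappa)$, and for each $i$ I let $\alpha_i\in\mathbb{Z}_{>0}$ be minimal such that there exist $m_{i,j}\in\mathbb{Z}$ with $\alpha_i\bar P_i+\sum_{j\neq i}m_{i,j}\bar P_j=0$ in $\mathbb{A}^v(\kappa)$. The key claim, replacing Proposition \ref{wang}, is $\gcd(\alpha_1,\dots,\alpha_e)=1$. Suppose for contradiction a prime $p$ divides every $\alpha_i$, and set $G:=\langle\bar P_1,\dots,\bar P_e\rangle$. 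Then
\[
\dim_{\mathbb{F}_p}G/pG=\dim_{\mathbb{F}_p}G[p]\le\dim_{\mathbb{F}_p}\mathbb{A}^v(\kappa)[p]\le 2g<e,
\]
so the images of $\bar P_1,\dots,\bar P_e$ in $G/pG$ are $\mathbb{F}_p$-dependent; lifting yields an integer relation $\sum c_i\bar P_i=0$ in $G$ with some $c_{i_0}\not\equiv 0\pmod p$. Minimality of $\alpha_{i_0}$ forces $\alpha_{i_0}\mid c_{i_0}$, hence $p\mid c_{i_0}$, a contradiction.

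Once $\gcd(\alpha_i)=1$, I write $e=\sum_{i=1}^e a_i\alpha_i$ with $a_i\in\mathbb{Z}$, put $m_{i,i}:=1-a_i\alpha_i$ on the diagonal, and rescale the off-diagonal entries of the $i$-th row from $m_{i,j}$ to $-a_i m_{i,j}$. The resulting $M\in\operatorname{Mat}_{e\times e}(\mathbb{Z})$ satisfies $\operatorname{tr} M=e-\sum a_i\alpha_i=0$ and $M\bar P=\bar P$, so $\operatorname{red}_v(P)\in\operatorname{red}_v(\Lambda)$ for all but finitely many $v$, contradicting Theorem \ref{thm A}. The principal obstacle is the rank bound $\dim_{\mathbb{F}_p}\mathbb{A}^v(\kappa)[p]\le 2g$ used in the gcd step: for $p\neq\operatorname{char}\kappa$ it follows from $T_\ell\mathbb{A}\cong\mathbb{Z}_\ell^{2g}$, and for $p=\operatorname{char}\kappa$ the $p$-rank of an abelian variety is at most $g$, so the bound holds a fortiori.
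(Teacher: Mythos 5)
Your proof is correct and follows the same route as the paper, which simply declares that the proof of Theorem \ref{best} goes through with $\mathbb{Z}$ substituted for $A$; your replacement of Proposition \ref{wang} by the bound $\dim_{\mathbb{F}_p}\mathbb{A}^v(\kappa)[p]\le 2g$ is exactly the intended adaptation. Your write-up is in fact tidier in two small respects: you verify $P\notin\Lambda+\mathbb{A}^e(F)_{\mathrm{tor}}$ rather than merely $P\notin\Lambda$, and you correctly rescale the off-diagonal entries to $-a_i m_{i,j}$, a point the displayed matrix in the proof of Theorem \ref{best} glosses over.
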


\section{Some other consequences of the reduction theorem}

In \cite{b17} S.Bara{\'n}czuk introduced a dynamical version of the local to global principle \ref{question}. In \cite{b17} the author considers  the case of abelian groups (modules over ${\mathbb Z}$ ) satisfying the following two axioms:
\begin{ass}\label{ass1}
Let $B$ be an abelian group such that there are homomorphisms ${\mathrm{red}}_{v}: B\rightarrow B_{v}$ for an infinite family $v\in F$, whose targets $B_v$ are finite abelian groups.
\begin{enumerate}
\item[(1)] Let $l$ be a prime number, and $(k_{1},\dots , k_m)$ be  a sequence of nonnegative integers. If $P_1,\dots , P_m \in B$ are points linearly independent over ${\mathbb Z},$ then there is a family of primes $v$ in $F$ such that 
$l^{k_{i}}|| {\ord}_{v}P_{i}$ if $k_{i}>0$ and $l\nmid {\ord}_{v}P_i$ if $k_i=0.$
\item[(2)] For almost all $v$ the map $B_{\text{\rm tors}}\rightarrow B_{v}$ is injective.
\end{enumerate}
\end{ass}
Here ${\ord}_{v}P$ is the order of a reduced point $P\mod v$.
In \cite{b17} and \cite{ba17} finitely generated abelian groups are considered. In our case we modify Assumption \ref{ass1} (2) so that we deal appropriately with infinite torsion in Drinfeld modules.
Instead of Assumption 6.1 (1) we assume  the following:
\begin{ass}\label{ass2}
Let $B$ be an $A$-module such that there are homomorphisms ${\mathrm{red}}_{\cal U}: B \rightarrow B_{\cal U}$ for an infinite family ${\cal U}\in SpecO_K$ such that ${ B}_{\cal U}$ is a torsion $A$-module.
For $P\in B$ let ${\ord}_{\cal U}P$ be the polynomial of minimal degree in ${\cal O}_K$ such that it annihilates ${\mathrm{red}}_{\cal U}P$
\begin{enumerate}
  \item[(1)] For  ${\cal U}$   and $(k_{1},\dots , k_m),$  a sequence of nonnegative integers, the following holds true: If $P_1,\dots , P_m \in B$ are points linearly independent over ${A},$ then there is a family of primes ${\cal W}$ in ${\cal O}_K$ such that 
${\cal U}^{k_{i}}|| {\ord}_{\cal W}P_{i}$ if $k_{i}>0$ and ${\cal U}\nmid {\ord}_{\cal W}P_i$ if $k_i=0.$

\item[(2)] For any ${\cal U}$ there are infinitely many primes ${\cal W}$ in ${\cal O}_K$ such that the reduction map is an injection on ${\cal U}$ - torsion, i.e. ${\mathrm{red}} _{\cal W}: B[\cal U]\hookrightarrow B_{\cal W}[\cal U].$
\end{enumerate}
\end{ass}  
The main theorem of \cite{b17} can be extended to the case of ${\mathbf t}$-modules considered in this paper in the following way.
\begin{thm}\label{stefbar1}
Let ${\widehat\varphi}={\phi}_{1}^{e_{1}}\times\dots\times {\phi}_{t}^{e_{t}}$ be a ${\mathbf t}$-module where ${\phi}_{i},\, 1\leq i \leq t$ are pairwise non-isogenous Drinfeld modules defined over ${\cal O}_K$  such that 
${\End}{\phi}_{i}=A.$  Let  ${\Lambda}\subset {\phi}({\cal O}_K)$  be a finitely generated $A$-submodule. 
 For $ x\in {\phi}({\cal O}_{K})$ and $ w(t)\in A$ let $O_{w(t)}(x)=\{ w^{n}(x): n\geq 0 \}$ be an orbit of the point $x$  under the iterations of   multiplication by $w(t)\in A.$
Then the following are equivalent
\begin{enumerate}
\item[(1)] For almost every $\cal U \in Spec(O_K)$
$$O_{w(t)}({\mathrm{red}}_{\cal U}(P))\cap {\mathrm{red}}_{\cal U}({\Lambda}) \neq \emptyset$$
\item[(2)] $O_{w(t)}(P)\cap {\Lambda}\neq\emptyset$
\end{enumerate}
\end{thm}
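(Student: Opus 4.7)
The implication $(2) \Rightarrow (1)$ is immediate: if $w^{n_0}(P) \in \Lambda$ for some $n_0 \geq 0$, applying $\mathrm{red}_{\cal U}$ shows $w^{n_0}(\mathrm{red}_{\cal U}(P)) \in \mathrm{red}_{\cal U}(\Lambda)$ for every prime $\cal U$ of $\mathcal{O}_K$.

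For the converse, I argue by contraposition: assume $w^n(P) \notin \Lambda$ for every $n \geq 0$ and produce a positive density set of primes $\cal W$ of $\mathcal{O}_K$ at which the orbit $O_{w(t)}(\mathrm{red}_{\cal W}(P))$ misses $\mathrm{red}_{\cal W}(\Lambda)$. Form the finitely generated $A$-submodule $N := \Lambda + AP \subset \widehat\varphi(\mathcal{O}_K)$, which contains the full orbit $O_{w(t)}(P) \subset AP$. Let $\bar P$ denote the image of $P$ in the quotient $M := N/\Lambda$; the contrapositive hypothesis translates to $w^n \bar P \neq 0$ in $M$ for every $n \geq 0$. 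Since $A$ is a PID and $M$ is finitely generated, the annihilator of $\bar P$ in $M$ is a principal ideal $(f)$, and the failure of $w^n \bar P = 0$ for every $n$ forces either $f = 0$ or $f$ to have a prime factor $\pi \in A$ that does not divide $w$. Either way, I select such a prime $\pi \in A$ with $\pi \nmid w$ (in the non-torsion case, any prime not dividing $w$ will do), so that the $\pi$-primary part of $\bar P$ in $M$ is nonzero.

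The key step is then to apply the reduction Theorem \ref{redthm11} (with $\mathcal{P} = (\pi)$) to a carefully chosen collection of $A$-linearly independent elements assembled from $P$ and a generating set of $\Lambda$, decomposed along the product structure $\widehat\varphi = \prod \phi_i^{e_i}$ using the algebraic machinery of Lemmas \ref{simple2} and \ref{trace} from Section 5. The output is a positive density set of primes $\cal W$ of $\mathcal{O}_K$ at which the $\pi$-primary component of $\mathrm{red}_{\cal W}(P)$ modulo $\mathrm{red}_{\cal W}(\Lambda)$ remains nonzero. Crucially, because $\pi \nmid w$, the element $w$ acts invertibly on every $\pi$-primary $A$-module, in particular on the $\pi$-primary part of $\phi_i^{\cal W}(\mathcal{O}_K/\mathcal{W})$. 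Consequently, no iterate $w^n(\mathrm{red}_{\cal W}(P))$ can enter $\mathrm{red}_{\cal W}(\Lambda)$, contradicting (1).

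The principal technical obstacle is the passage from the quotient module $M = N/\Lambda$, where the $\pi$-obstruction is detected, back to the individual Mordell-Weil groups $\phi_i(\mathcal{O}_K)$, where Theorem \ref{redthm11} requires genuinely $A$-linearly independent input. This is exactly the type of bookkeeping that appears in the proof of Theorem \ref{localglobal}: one must decompose $P$ and the generators of $\Lambda$ along the factors of $\widehat\varphi$, extract a subset that isolates the $\pi$-primary obstruction, and verify its $A$-independence in each $\phi_i(\mathcal{O}_K)$ after suitable multiplication by polynomials $M_0, M_2, M_3 \in A$ as in Section 5. This adapts Barańczuk's original strategy in \cite{b17} from the $\mathbb{Z}$-module setting to modules over $A = \mathbb{F}_q[t]$, with Theorems \ref{redthm0} and \ref{redthm11} replacing the corresponding reduction input (Assumption \ref{ass2}) for Drinfeld modules.
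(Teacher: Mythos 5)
Your outline has the right overall shape, and the first and last steps are fine: the easy implication, the reduction to a prime $\pi$ of $A$ with $\pi\nmid w$ detecting the obstruction in $M=N/\Lambda$, and the observation that $w$ then acts invertibly on $\pi$-primary modules so that no iterate $w^{n}(\mathrm{red}_{\mathcal W}(P))$ can fall into $\mathrm{red}_{\mathcal W}(\Lambda)$ once the obstruction is known to survive reduction at $\mathcal W$. For comparison, the paper does not re-run this argument at all: its proof consists of verifying Assumption \ref{ass2} --- part (2) from Proposition \ref{injection}, and part (1) from Theorem \ref{redthm1} by choosing the torsion points $T_{i,j}$ of \emph{exact} order ${\cal P}^{k_{i,j}}$ --- and then citing Bara\'nczuk's proof in \cite{b17} with $\mathbb Z$ replaced by $A$. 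You never verify the exact-order statement of Assumption \ref{ass2}(1), which is the form of the reduction theorem Bara\'nczuk's argument actually consumes. A smaller slip: in the non-torsion case ($f=0$) the ``$\pi$-primary part of $\bar P$'' is zero; what you need there is that $\bar P$ has nonzero image in $M/\pi^{k}M$ for all $k$, or equivalently control of the exact $\pi$-order of the reductions.

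The genuine gap is your ``key step.'' Producing a positive density set of primes $\mathcal W$ at which the $\pi$-primary obstruction to $P\in\Lambda$ survives is not bookkeeping that can be delegated to Section 5: Lemmas \ref{simple2} and \ref{trace} and the counting $k_{i}\le e_{i}\le d_{i}$ are precisely where the hypothesis $d_{i}\ge e_{i}$ of Theorem \ref{localglobal} enters, and Theorem \ref{stefbar1} carries no such hypothesis. Worse, without some hypothesis of this kind the set of primes you claim to construct provably need not exist: by Theorem \ref{best} (the Jossen--Perucca construction of \cite{jp10}), for $\widehat\varphi=\phi^{d+1}$ there are $P\notin\Lambda$ with $\mathrm{red}_{\mathcal W}(P)\in\mathrm{red}_{\mathcal W}(\Lambda)$ for \emph{every} good prime $\mathcal W$; since $\mathrm{red}_{\mathcal W}(P)=w^{0}(\mathrm{red}_{\mathcal W}(P))$ lies in the orbit, condition (1) holds at every good prime, while $w^{n}P=(w^{n}I)P\in\Lambda$ would force $\tr(w^{n}I)=(d+1)w^{n}=0$, so condition (2) fails whenever $p\nmid d+1$. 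Thus your argument must either import $d_{i}\ge e_{i}$ (together with whatever restrictions on $\Lambda$ make the Bara\'nczuk-type argument go through for the product module $\widehat\varphi$) or it breaks at exactly this point; as written, the central step of the proof is asserted rather than proved, and in the stated generality it is false.
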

whereas the main theorem of \cite{ba17} extended to ${\mathbf t}$-modules reads as follows:
\begin{thm}\label{stefbar2}
Let ${\widehat\varphi}={\phi}_{1}^{e_{1}}\times\dots\times {\phi}_{t}^{e_{t}}$ be a ${\mathbf t}$-module where ${\phi}_{i},\, 1\leq i \leq t$ are pairwise non-isogenous Drinfeld modules defined over ${\cal O}_K$  such that 
${\End}{\phi}_{i}=A.$ Let $P,Q\in {\phi}({\cal O}_{K})$ and $w_{1}(t), w_{2}(t)\in A.$ Suppose that for almost all ${\cal U}\in Spec{\cal O}_K$ there exists a natural number $n_{\cal U}$ such that 
$$ {\mathrm{red}}_{\cal U}(w_{1}^{n_{\cal U}}P-w_{2}^{n_{\cal U}}Q)=0$$
Then there exists a natural number $n$ and a torsion point $T\in {\phi}(O_K)$ of an order that divides some power of ${\gcd}(w_{1},w_{2})$ such that $w_{1}^nP-w_{2}^nQ=T.$
\end{thm}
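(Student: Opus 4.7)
The plan is to argue by contraposition, adapting Bara\'nczuk's strategy from \cite{ba17} to the Drinfeld setting via the reduction theorems of Section~4. First write $d=\gcd(w_1,w_2)$ and $w_i=du_i$ with $\gcd(u_1,u_2)=1$. The identity $d^k(w_1^nP-w_2^nQ)=d^{n+k}(u_1^nP-u_2^nQ)$ shows that the conclusion is equivalent to the existence of $n\geq 1$ for which $r_n:=u_1^nP-u_2^nQ$ is annihilated by some power of $d$, i.e.\ is $d$-power torsion. Because $\widehat\varphi$ is a direct product, the hypothesis and conclusion decompose componentwise, and it suffices to work in a single Drinfeld module $\phi=\phi_i$ with $\End_{K^{\mathrm{sep}}}\phi=A$.

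Assume for contradiction that no $r_n$ is $d$-power torsion, and choose a prime $\mathcal{P}$ of $A$ coprime to $du_1u_2$, so that $d,u_1,u_2$ all act invertibly on every $\pi_\mathcal{P}$-primary torsion module. The central case is when $P$ and $Q$ are $A$-linearly independent modulo the torsion submodule of $\phi(\mathcal{O}_K)$; here the conclusion fails automatically, since any relation $u_1^nP-u_2^nQ\in\phi(\mathcal{O}_K)_{\mathrm{tor}}$ would give a nontrivial $A$-linear relation modulo torsion. Fix a nonzero point $T_P\in\phi[\mathcal{P}]$ and apply Theorem~\ref{redthm11} to the linearly independent pair $\{P,Q\}$ with torsion targets $(T_P,0)$. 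This yields a positive-density set of primes $\mathcal{W}$ of $\mathcal{O}_K$ (with primes $\mathcal{W}'$ above $\mathcal{W}$ in $L=K(\phi[\mathcal{P}])$) for which, in the $\pi_\mathcal{P}$-primary part of the reduced Mordell--Weil group, $\mathrm{red}_{\mathcal{W}'}(P)\equiv T_P$ and $\mathrm{red}_{\mathcal{W}'}(Q)\equiv 0$. For such $\mathcal{W}$, projecting the hypothesis $\mathrm{red}_{\mathcal{W}}(w_1^{n_\mathcal{W}}P-w_2^{n_\mathcal{W}}Q)=0$ onto the $\pi_\mathcal{P}$-primary component yields $w_1^{n_\mathcal{W}}T_P=0$; invertibility of $w_1$ there forces $T_P=0$, contradicting the choice of $T_P$.

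The main obstacle is the $A$-linearly dependent case. Using Poonen's decomposition (Remark~\ref{infgen}) together with Proposition~\ref{wang}, write $P=P_{\mathrm{tor}}+cR$ and $Q=Q_{\mathrm{tor}}+eR$ with $R$ in the free part of $\phi(\mathcal{O}_K)$ and $c,e\in A$, $\gcd(c,e)=1$. If $R=0$, or if one of $P,Q$ is entirely torsion, Proposition~\ref{injection} (injectivity of reduction on prime-to-residue torsion) combined with a Chinese-remainder argument over the finitely many $A$-primes in the support of the torsion gives a single $n$ satisfying $u_1^nP=u_2^nQ$ modulo $d$-power torsion. When $R\neq 0$ and both $P,Q$ have nontrivial free part, one applies Theorem~\ref{redthm11} to the singleton $\{R\}$ with torsion targets $T_R\in\phi[\mathcal{P}^m]$ of $\pi_\mathcal{P}$-order $m$; the projection of the hypothesis onto the $\pi_\mathcal{P}$-primary part then forces $\pi_\mathcal{P}^m\mid u_1^{n_m}c-u_2^{n_m}e$ in $A$ for some $n_m$, alongside matching constraints on the torsion contribution $u_1^{n_m}P_{\mathrm{tor}}-u_2^{n_m}Q_{\mathrm{tor}}$. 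Letting $m\to\infty$ and invoking a compactness argument on $A_\mathcal{P}$, potentially varying $\mathcal{P}$ to rule out accidental $\mathcal{P}$-adic coincidences, one extracts $n\in\mathbb{N}$ with $cu_1^n=eu_2^n$, whence by coprimality $c=u_2^n$ and $e=u_1^n$ up to units. The delicate technical point, which is the main obstacle, is ensuring the compactness argument terminates with a genuine integer $n$; this is where the bulk of the work lies.
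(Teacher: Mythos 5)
Your overall strategy (reduce the conclusion to $r_n=u_1^nP-u_2^nQ$ being $d$-power torsion, split into the cases where $P,Q$ are $A$-independent or dependent modulo torsion, and feed Theorem \ref{redthm11} the torsion values you want to see in the reductions) is the right one, and your independent case is essentially the paper's verification of Assumption \ref{ass2}(1); the paper itself does not argue directly but checks Assumptions \ref{ass2} (part (1) via Theorem \ref{redthm1}, part (2) via Proposition \ref{injection}) and then invokes Bara\'nczuk's axiomatic proof from \cite{ba17} with $\mathbb{Z}$ replaced by $A$. Your proposal, however, has two genuine gaps. First, the claim that ``the hypothesis and conclusion decompose componentwise'' is not correct for the conclusion: the theorem asks for a \emph{single} exponent $n$ valid for all factors of $\widehat\varphi$ simultaneously, and the property that $w_1^nP_i-w_2^nQ_i$ is $d$-power torsion is not stable under increasing $n$ (indeed $r_{n+1}=u_1r_n+u_2^n(u_1-u_2)Q$). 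In the dependent case your own analysis pins $n$ down uniquely in each factor (via $c=u_2^n$, $e=u_1^n$ up to units), so different factors may force different values of $n$; excluding this requires exploiting that the hypothesis supplies the \emph{same} $n_{\cal U}$ for all components at each prime, which your componentwise reduction discards.

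Second, and more seriously, the dependent case is not actually proved. The compactness argument you describe shows only that $0$ lies in the ${\cal P}$-adic closure of the set $\{u_1^nc-u_2^ne : n\in\mathbb{N}\}\subset A_{\cal P}$; a sequence of nonzero elements of $A$ can converge to $0$ in $A_{\cal P}$ without any term vanishing (the analogue of $2^{(p-1)p^k}-1\to 0$ in $\mathbb{Z}_p$), so no genuine integer $n$ with $cu_1^n=eu_2^n$ is extracted, and degree-versus-valuation bounds only show $n_m\to\infty$. You flag this yourself as ``where the bulk of the work lies,'' but that is precisely the content of the theorem in the dependent case, together with the further verification that for the resulting $n$ the torsion discrepancy $u_1^nP_{\mathrm{tor}}-u_2^nQ_{\mathrm{tor}}$ has order dividing a power of $d$ (which needs the hypothesis at primes where reduction is injective on the prime-to-$d$ torsion, i.e.\ Proposition \ref{injection}). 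The way to close this is the one the paper takes: use the reduction theorem in the sharper form of Assumption \ref{ass2}(1), prescribing the \emph{exact} power ${\cal U}^{k}$ exactly dividing $\ord_{\cal W}R$ rather than mere divisibility, and then run Bara\'nczuk's argument, which uses that exact-order control to force the identity $cu_1^n=eu_2^n$ in $A$ instead of ${\cal P}$-adic proximity.
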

Proofs of Theorems \ref{stefbar1} and \ref{stefbar2} follow the lines of   S.Bara{\'n}czuk's original proofs. In appropriate places one has to replace multiplication by a natural number (viewed as an element of ${\End}B$ ) by the 
$A$-module action of an element $w(t)\in A$  and Assumption \ref{ass1} by \ref{ass2}.  Additionally one has to check that Assumptions \ref{ass2} is fulfilled. Assumption \ref{ass1} (2) is fulfilled by $B={\phi}({\cal O}_{K})$ cf. (2) of Proposition \ref{injection}. As for  Assumption \ref{ass1} (1) we readily see that it follows from Theorem \ref{redthm1}.

{}

\end{document}